\g@addto@macro\normalsize{%
  \setlength\abovedisplayskip{5pt}
  \setlength\belowdisplayskip{5pt}
  \setlength\abovedisplayshortskip{3pt}
  \setlength\belowdisplayshortskip{3pt}
}
\newcommand{\N}{\ensuremath{\mathbb{N}}}
\newcommand{\Z}{\ensuremath{\mathbb{Z}}}
\newcommand{\Q}{\ensuremath{\mathbb{Q}}}
\newcommand{\R}{\ensuremath{\mathbb{R}}}
\newcommand{\F}{\ensuremath{{\mathcal{F}}}}
\newcommand{\A}{\ensuremath{\mathcal{A}}}
\newcommand{\mS}{\ensuremath{\mathcal{S}}}
\newcommand{\floor}[1]{\lfloor #1 \rfloor}
\newcommand\mydots{\makebox[0.9em][c]{.\hfil.\hfil.}}
\renewcommand{\epsilon}{\varepsilon}
\theoremstyle{plain}
\newtheorem{theorem}{Theorem}[section]
\newtheorem{lemma}[theorem]{Lemma}
\newtheorem{corollary}[theorem]{Corollary}
\newtheorem{prop}[theorem]{Proposition}
\crefname{prop}{Proposition}{Propositions}
\theoremstyle{definition}
\newtheorem{definition}{Definition}[section]
\theoremstyle{remark}
\newtheorem{remark}[theorem]{Remark}
\title{Weakly and Strongly Aperiodic Subshifts of Finite Type on Baumslag-Solitar Groups}
\renewcommand\@date{{%
  \vspace{-\baselineskip}%
  \vspace{-\baselineskip}%
  \vspace{-\baselineskip}%
  \large\centering
  \begin{tabular}{@{}c@{}}
    ~~~~Solène J. Esnay\textsuperscript{1,2} \\
    \normalsize julien.esnay@ens-lyon.fr
  \end{tabular}%
  \quad and\quad
  \begin{tabular}{@{}c@{}}
    ~~~~Etienne Moutot\textsuperscript{3}\\
    \normalsize etienne.moutot@math.cnrs.fr
  \end{tabular}

  \bigskip

  \normalsize

  \textsuperscript{1} Université de Lyon, CNRS, ENS de Lyon, UCBL, LIP, F-69342, Lyon~Cedex~07, France\par
  \textsuperscript{2} Institut de Math\'ematiques de Toulouse, Universit\'e Paul Sabatier, 118 route de Narbonne, F-31062 Toulouse~Cedex~9, France\par
  \textsuperscript{3} CNRS, Aix Marseille Univ., I2M, Marseille, France

  \bigskip
}}
\def\keywords{\xdef\@thefnmark{}\@footnotetext}
\begin{document}

\maketitle

\begin{abstract}
We study the periodicity of subshifts of finite type (SFT) on Baumslag-Solitar groups.
We show that for residually finite Baumslag-Solitar groups there exist both strongly and weakly-but-not-strongly aperiodic SFTs. In particular, this shows that unlike $\Z^2$, but like $\Z^3$, strong and weak aperiodic SFTs are different classes of SFTs in residually finite BS groups. More precisely, we prove that a weakly aperiodic SFT on BS(m,n) due to Aubrun and Kari is, in fact, strongly aperiodic on BS(1,n); and weakly but not strongly aperiodic on any other BS(m,n).
In addition, we exhibit an SFT which is weakly but not strongly aperiodic on BS(1,n); and we show that there exists a strongly aperiodic SFT on BS(n,n).
\end{abstract}

\section*{Introduction}

The use of tilings as a computational model was initiated by Wang in the 60s~\cite{Wang1961} as a tool to study specific classes of logical formulas.
His model consists in square tiles with colors on each side, that can be placed next to each other if the colors match. 
Wang studied the tilings of the discrete infinite plane~($\Z^2$) with these tiles, now called Wang tiles; a similar model exists to tile the infinite line~($\Z$) with two-sided dominoes.
Wang realized that a key property of these tilings was the notion of periodicity.
At the time, it was already known that if a set of dominoes tiled~$\Z$, it was always possible to do it in a periodic fashion, and Wang suspected that it was the same for Wang tiles on~$\Z^2$.
However, a few years later, one of his students, Berger, proved otherwise by providing a set of Wang tiles that tiled the plane but only aperiodically~\cite{BergerPhD}. Numerous aperiodic sets of Wang tiles have been provided by many since then (see for example~\cite{Robinson1971, Kari2008, JeandelRao}).

The model of Wang tiles is actually equivalent to tilings using an arbitrary finite alphabet with adjacency rules, and it has become a part of symbolic dynamics, a more general way to encode a smooth dynamical system into symbolic states and trajectories. 
This approach by discretization (see~\cite{genesis} for a comprehensive historiography) is itself part of the field of discrete dynamical systems.
In this broader context, it is interesting to study the set of all possible tilings for a given finite list of symbols and adjacency rules, called a Subshift of Finite Type~(SFT).
$\Z$-SFTs have been studied extensively, and many of their properties are known (see for example~\cite{LindMarcus}).
{$\Z^d$-SFTs} (for $d\geq 2$) seem to be much more complex, as many results from the unidimensional case cannot be directly transcribed to higher dimensions.
In recent years, the even wider class of SFTs built over Cayley graphs of groups has been attracting more and more attention~\cite{commensurable, computable, Aubrun2018}.

In this broader setting, many questions about periodicity are still open~\cite{commensurable}.
For example there are many groups for which we do not know if there exists a non-empty SFT with only aperiodic configurations (called an aperiodic SFT).
Another relevant question is the relation between two notions of aperiodicity, weak and strong aperiodicity: the first one requires all configurations to have infinitely many distinct translates; the second one that no configuration has any period whatsoever. 
They are equivalent for SFTs over $\Z$ or $\Z^2$, but this is not the case for other groups (notably $\Z^3$), and for many the relation is not even known.

Baumslag-Solitar groups of parameters $m$ and $n$, commonly denoted by~$BS(m,n)$, initially gathered interest in symbolic dynamics because of their simple description and rich properties. 
Notably, their domino problem is undecidable~\cite{BS}; and Aubrun and Kari built a weakly aperiodic SFT in order to prove it.
Their proof is essentially sketched in the general case, where it encodes piecewise affine maps, as it is done on~$\Z^2$ in~\cite{Kari2008}. 
Only the $BS(2,3)$ case is detailed in their paper; and an explicit period is provided for a given configuration, which shows that the resulting SFT, although weakly aperiodic, is not strongly aperiodic.

In the present paper, after a few definitions~(\cref{sec:def}); 
we thoroughly reintroduce Aubrun and Kari's construction in the general case, 
and provide a precise proof of its weak aperiodicity by encoding piecewise linear maps~(\cref{sec:AK}) (although Aubrun and Kari sketched the proof for encoding piecewise affine maps, only piecwise linear ones are needed for the aperiodicity result). 
Then, we show that the resulting SFT is weakly but not strongly aperiodic on any~$BS(m,n)$~(\cref{subsec:weakmn}) (as sketched in \cite{BS}); except on $BS(1,n)$ where, with some extra work, we prove that it is strongly aperiodic~(\cref{subsec:strong1n}).
Then, using a different technique based on substitutions on words, we exhibit a weakly but not strongly aperiodic SFT on~$BS(1,n)$~(\cref{sec:weak1n}). 
Finally, by using tools from group theory and a theorem by Jeandel~\cite{computable}, we build a strongly aperiodic SFT on any~$BS(n,n)$~(\cref{sec:strongnn}).
These new results are summarized in the following table (in bold):

\begin{table}[h!]
  \begin{center}
    \resizebox{\columnwidth}{!}{
      \begin{tabular}{|c|c|c|}
        \hline
        Group & Strongly aperiodic SFT & Weakly-not-strongly aperiodic SFT\\
        \hline
        $\Z^2$
        &
        Yes (Berger \cite{BergerPhD})
        &
        No (Folklore)
        \\
        \hline
        $BS(1,n)$
        &
        \textbf{Yes, adapted from Aubrun-Kari} (\cref{subsec:strong1n})
        &
        \textbf{Yes, using substitutions} (\cref{sec:weak1n})
        \\
        \hline
        $BS(n,n)$
        &
        \textbf{Yes, using a theorem by Jeandel} (\cref{sec:strongnn})
        &
        Yes (\cref{subsec:weakmn}, Aubrun-Kari \cite{BS})
        \\
        \hline
        $BS(m,n)$
        &
        ?
        &
        Yes (\cref{subsec:weakmn}, Aubrun-Kari \cite{BS})
        \\
        \hline
      \end{tabular}
    }
  \end{center}
\end{table}

The end result is that any residually finite Baumslag-Solitar group $BS(m,n)$ with $|m|\geq 2$ or $|n|\geq 2$ has a strongly aperiodic SFT and a weakly but not strongly aperiodic SFT. The remaining question is whether a general~$BS(m,n)$ admits a strongly aperiodic SFT, when not in the $|m|=|n|$, $|m|=1$ or $|n|=1$ cases.


\section{Definitions}
\label{sec:def}

\subsection{Baumslag-Solitar groups}

A group $G$ is said to be \emph{finitely generated} if it has a presentation $G=\langle S \mid R \rangle$ with $S$ finite.
Given a presentation $\langle S \mid R \rangle$ of a group $G$, its (right) \emph{Cayley graph} is the graph $\Gamma_G = (G,E_\Gamma)$ whose vertices are the elements of $G$ and the edges are of the form $(g, gs)$ with $g\in G$ and $s\in S\cup S^{-1}$ a generator of $G$ or its inverse.

A group $G$ is said to be \emph{residually finite} if for any $g \in G$ that is not the identity, there is a normal subgroup $N \vartriangleleft G$ of finite index such that $g \notin N$.

The groups we are interested in in this paper are the \emph{Baumslag-Solitar groups}. They are defined, using two \emph{nonzero} integers $m,n$ as parameters, by the presentation
\[ BS(m,n) = \langle a,b \mid b a^{m} b^{-1} = a^n \rangle . \]

\begin{prop}[Meskin \cite{residuallyfinite}]
  \label{prop:residually}
  $BS(m,n)$ is residually finite $\Leftrightarrow$ $|m|=1$ or $|n|=1$ or $|m| = |n|$.
\end{prop}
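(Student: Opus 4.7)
The statement splits into two implications, handled separately.

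\emph{Sufficient direction.} For $|m|=1$ (the case $|n|=1$ is symmetric), the explicit representation $a \mapsto \bigl(\begin{smallmatrix}1 & 1 \\ 0 & 1\end{smallmatrix}\bigr)$, $b \mapsto \bigl(\begin{smallmatrix}n & 0 \\ 0 & 1\end{smallmatrix}\bigr)$ embeds $BS(1,n)$ faithfully into $GL_2(\Q)$; Malcev's theorem on finitely generated linear groups in characteristic zero then gives residual finiteness. For $|m|=|n|$, the element $a^{|n|}$ generates a central infinite cyclic subgroup (straight from the defining relation when $m=n$; a similar computation, possibly after passing to an index-$2$ subgroup, handles $m=-n$), yielding a central extension
\[
  1 \longrightarrow \Z \longrightarrow BS(m,n) \longrightarrow Q \longrightarrow 1
\]
where $Q$ is (a finite extension of) $\Z/|n|\Z * \Z$. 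The quotient $Q$ is residually finite as a free product of residually finite groups, and a classical argument shows that a central $\Z$-extension of a residually finite group is residually finite.

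\emph{Necessary direction.} Assume $|m|, |n| \geq 2$ with $|m| \neq |n|$; the goal is to exhibit a nontrivial element $w \in BS(m,n)$ that vanishes in every finite quotient. In any finite quotient, let $\bar a, \bar b$ be the images of $a, b$ and $k$ the order of $\bar a$. The relation $\bar b \bar a^m \bar b^{-1} = \bar a^n$ forces $\bar a^m$ and $\bar a^n$ to be conjugate, hence to share order, which translates into $\gcd(k,m) = \gcd(k,n)$. Combined with $|m|\neq |n|$, this rigid arithmetic constraint pins down precise divisibility relations on $k$ and forces conjugation by $\bar b$ to act on $\langle \bar a\rangle$ in a very restricted way. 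One then builds $w$ as a carefully chosen commutator word in $a$ and $bab^{-1}$ (for instance of the form $[a,\, b a^{t} b^{-1}]$ for a suitable integer $t$) so that this constraint forces $\bar w = 1$ in every finite quotient. Nontriviality of $w$ in $BS(m,n)$ itself follows from Britton's lemma applied to the HNN-extension description $BS(m,n) = \langle a\rangle \,*_{\,\langle a^m\rangle \simeq \langle a^n\rangle}$: one checks that $w$ is a reduced HNN word containing no pinch, hence nontrivial.

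\emph{Main obstacle.} The heart of the argument lies in the simultaneous engineering of $w$: it must be a reduced HNN word so as to survive in $BS(m,n)$, yet lie universally in the kernel of every map to a finite group. The relevant combinatorial number theory — going beyond the order-matching $\gcd(k,m)=\gcd(k,n)$ to describe all possible conjugation actions of $\bar b$ on $\langle \bar a\rangle$ under the hypothesis $|m|\neq |n|$ — is the delicate part. Verifying that $w$ is reduced (hence nontrivial) via Britton's lemma is then a direct but careful computation in the HNN-extension normal form.
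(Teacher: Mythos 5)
This proposition is not proved in the paper at all: it is imported verbatim as Meskin's theorem from the cited reference, so there is no internal proof to measure your attempt against. Judged on its own terms, your proposal has two genuine gaps. In the sufficient direction, the $|m|=1$ case via the affine matrix representation and Malcev is fine, but the $|m|=|n|$ case rests on the claim that ``a central $\Z$-extension of a residually finite group is residually finite,'' which is false in general: Deligne's central extensions of $Sp_{2n}(\Z)$ by $\Z$ are finitely generated, have residually finite quotient, and are not residually finite. The statement can be repaired here, but only by using something specific to $BS(n,n)$: every nontrivial element of the central subgroup $\langle a^n\rangle$ already survives in the abelianization $\Z^2$, so one separates $g\in\langle a^n\rangle\setminus\{e\}$ there and $g\notin\langle a^n\rangle$ in the residually finite quotient $\Z/n\Z * \Z$. (Alternatively, the paper's own \cref{prop:BSnn} exhibits $\Z\times\mathbb{F}_n$ as a finite-index subgroup, and residual finiteness passes up from finite-index subgroups.) Your treatment of $m=-n$ is also only gestured at.

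The necessary direction is the heart of Meskin's theorem and your proposal does not actually carry it out: the witness $w$ is never constructed, and the ``rigid arithmetic constraint'' is only the order computation $\gcd(k,m)=\gcd(k,n)$. To see that the engineering is not routine, note that the natural candidate $w=[\,b a^{d} b^{-1},\,a\,]$ with $d=\gcd(m,n)$ does die in every finite quotient (since $\langle\bar a^{\gcd(k,m)}\rangle$ is $\bar b$-invariant and contains $\bar a^{d}$), and is Britton-reduced hence nontrivial when $|m|\nmid|n|$ and $|n|\nmid|m|$ --- but it collapses to the identity in $BS(m,n)$ itself when $|m|$ divides $|n|$ (e.g.\ $BS(2,4)$, where $ba^{2}b^{-1}=a^{4}$), exactly the case your sketch does not address. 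The shortcut via non-Hopficity (the surjective, non-injective endomorphism $a\mapsto a^{m}$, $b\mapsto b$, plus Malcev's theorem that finitely generated residually finite groups are Hopfian) only covers the case where $m$ and $n$ have different prime divisors, and also misses $BS(2,4)$, which is Hopfian. So the necessary direction, as written, is an honest statement of the difficulty rather than a proof of it.
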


\subsection{Subshifts and tilings}
\label{subsec:tilings}
In a general sense, a \emph{dynamical system} is a set $S$ endowed with a topology $U$ and on which a group acts; if $\Z$ acts on it by the iteration of a function $f$, we may write $(S,U,f)$ instead of $(S,U,\Z)$.
Two dynamical systems $(S,U,G)$ and $(T,V,G)$ are \emph{conjugate} if there exists a continuous bijection $\phi\colon S \rightarrow T$ so that for any $s \in S$ and any $g \in G$, $\phi(g \cdot s) = g \cdot \phi(s)$ (where $G$ acts, respectively, on $S$ and on $T$).

Let $\A$ be a finite alphabet.
Let $G$ be a finitely generated group with presentation $\langle S \mid R \rangle$ and neutral element $e$. Let $x \in \A^G$ and $g,h \in G$: $G$ naturally acts on the left on $\A^G$ by $(g \cdot x)_h = x_{g^{-1}h}$.
The set $\A^G$, when endowed with the product topology $t$ and this action, forms a compact dynamical system $(\A^G,t,\cdot)$ called the \emph{full shift} over $G$. We call $x \in \A^G$ a \emph{configuration}.

A \emph{pattern} $p$ is a finite configuration $p \in \mathcal{A}^{P_p}$ where $P_p \subset G$ is finite.
We say that a pattern $p \in \mathcal{A}^{P_p}$ \emph{appears} in a configuration $x\in\A^{G}$ --~or that $x$ \emph{contains} $p$~-- if there exists $g \in G$ such that for every $h \in P_p$, $(g \cdot x)_{h} = p_{h}$, in this case we write $p\sqsubset x$.

The \emph{subshift} associated to a set of patterns \F, called set of \emph{forbidden patterns}, is defined by
\[
X_\F = \{ x \in \mathcal{A}^{G} \mid \forall p \in \mathcal{F}, p \not\sqsubset x \}
\]
that is, $X_\F$ is the set of all configurations that do not contain any pattern from~$\F$. Note that there can be several sets of forbidden patterns defining the same subshift $X$. A subshift can equivalently be defined as a closed set under both the topology and the $G$-action.\\
If $X=X_\F$ with $\F$ finite, then $X$ is called a \emph{Subshift of Finite Type}, SFT for short.

Let $X$ be a subshift on a group $G$ and $x \in X$.
The \emph{orbit} of $x$ is ${Orb_G(x) = \{g\cdot x \mid g\in G\}}$ and its \emph{stabilizer} $Stab_G(x) = \{ g\in G \mid g\cdot x = x \}$.
We say that $x$ is a \emph{strongly periodic configuration} if $|Orb_G(x)| < +\infty$, and that $x$ is a \emph{weakly periodic configuration} if $Stab_G(x) \neq \{e\}$.
If no configuration in $X$ is strongly periodic and the subshift is non-empty, then the subshift is said to be \emph{weakly aperiodic}.
If no configuration in $X$ is weakly periodic and the subshift is non-empty, then the subshift is said to be \emph{strongly aperiodic}.

A particular class of SFTs is obtained by considering Wang tiles over the Cayley graph of the group.
This can be done for any finitely generated group, and it turns out that any SFT can be encoded into an equivalent set of Wang tiles.
In order to make the definitions shorter, we limit ourselves to the definition of Wang tiles over $BS(m,n)$.
A \emph{Wang tileset} is a particular SFT where the alphabet is a set of \emph{Wang tiles} $\tau$, which are tuples of colors of the form $s = (t_1^s,\dots,t_m^s,l^s,r^s,b_1^s,\dots,b_n^s)$.
To make notations easier, we denote:
\begin{align*}
  s(\text{top}_1)     & = t_1^s \\
              & ~~\vdots \\
  s(\text{top}_m)   & = t_{m}^s \\
  s(\text{left})    & = l^s \\
  s(\text{right})   & = r^s \\
  s(\text{bottom}_1)  & = b_1^s \\
              & ~~\vdots \\
  s(\text{bottom}_n)  & = b_{n}^s
\end{align*}

\begin{figure}[ht]
  \centering
  \includegraphics[scale=0.9]{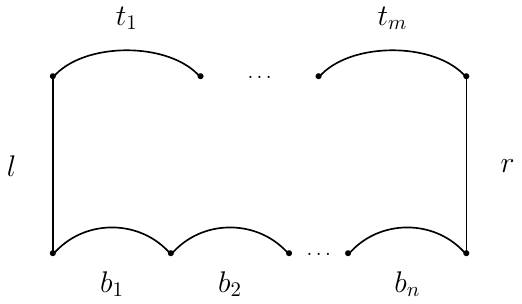}
  \caption{A Wang tile of $BS(m,n)$}
  \label{fig:tiles_BSmn}
\end{figure}

A \emph{tiling} is then a configuration $T\in \tau^{BS(m,n)}$ over the group using the alphabet $\tau$.
We say that a tiling is \emph{valid} if the colors of neighboring tiles match.
That is, for any $g\in BS(m,n)$ and $T_g$ the associated tile at position $g$, we must have:
\begin{align*}
    T_g(\text{right})     & = T_{ga^m}(\text{left}) \\
    T_g(\text{top}_k)     & = T_{ga^{k-l}b}(\text{bottom}_l)
\end{align*}
for any $k \in \{1,\dots,m\}$ and $l \in \{1,\dots,n\}$.
\begin{figure}[ht]
  \centering
  \includegraphics[scale=0.9]{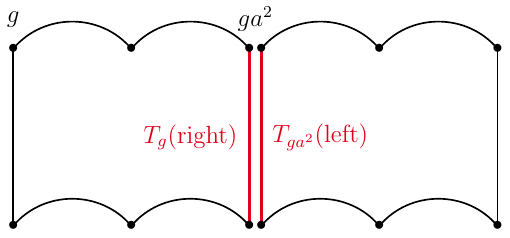}
    \hspace{0.3em}
  \includegraphics[scale=0.9]{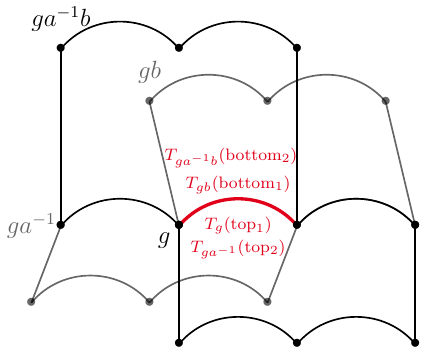}
  \caption{Illustration of the neighbor rules for $BS(2,2)$.}
  \label{fig:tiling}
\end{figure}
See \cref{fig:tiling} for an illustration of these rules.\\
The set of all valid tilings for a tileset $\tau$ forms an SFT $X^{\tau}$, since the tileset gives a finite number of local constraints based on a finite alphabet. In general, it is not necessarily simpler to consider Wang tilesets instead of local constraints on the Cayley graph; however, in \cite{BS} the construction heavily uses the visual representation of tiles with numbers on the top and bottom that encode a multiplication by a real number, and this article will do the same.

\subsection{Substitutions}
\label{subsec:substitutions}

Let $\A^*$ be the set of (finite) words over $\A$.
A \emph{substitution} (or morphism) is a map $s\colon \A \rightarrow \A^*$. We say it is \emph{uniform} of size $n \in \N$ if for every $a \in \A, |s(a)|=n$.
The substitution $s$ can be extended to $\A^*$ by applying it to each letter of the word and concatenating the resulting words.
We can also extend $s$ to $\A^{\N_0}$ (resp. $\A^\N$) by doing the same, concatenating infinitely many words, the first letter of the first word being at position $0$ (resp. $1$).
Finally, $s$ can be extended to \emph{pointed} biinfinite words (so on $\A^\Z$ in a certain way) as illustrated in \cref{fig:biinf}.

\begin{figure}[ht]
  \centering
  \includegraphics[scale=0.7]{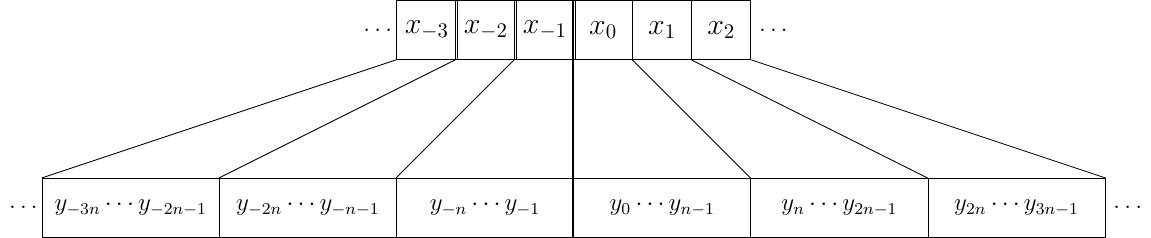}
  \caption{How a substitution is applied to a biinfinite word: $y=s(x)$ with $s$ a uniform substitution of size $n$}
  \label{fig:biinf}
\end{figure}

For $a\in \A$ with $s(a)=aw$ with $w\in\A^*$,
we define the \emph{positive infinite iteration} of $s$ on $a$:
\[ \overrightarrow{{s}^\omega}(a) = a ~ w ~ s(w) ~ s^2(w) \cdots \in \A^{\N_0} .\]
In the same way, we define the \emph{negative infinite iteration} of $s$ on $a$:
\[ \overleftarrow{~^\omega\hspace{-0.25em} s}(a) = \cdots s^2(w) ~ s(w) ~ w ~ a  \in \A^{\Z^-} .\]

For a word $u$ (possibly biinfinite), we define its factor complexity \[ P_u(n) = |\{w \in \A^n \mid w \sqsubset u\}| \] where $w \sqsubset u$ indicates that $w$ is a subword of $u$. The factor complexity of a biinfinite word is bounded if and only if that word is periodic \cite{cassaigne_nicolas_2010}, that is, if it is made of the same finite word concatenated infinitely many times.

A \emph{fixpoint} of a substitution $s$ is a (possibly biinfinite) word $u$ such that $s(u) = u$.


\section{On a construction by Aubrun and Kari}
\label{sec:AK}

In \cite{BS}, Aubrun and Kari provide a weakly aperiodic Wang tileset on Baumslag-Solitar groups, with a proof focusing on the specific case of $BS(2,3)$, for which they also present a period for one specific configuration, implying that the corresponding Wang tileset is not strongly aperiodic.

A more general version of the construction can be found in \cite{BSdetailed}. We repeat most of it here for the sake of completeness, since we study that construction in more details to obtain additional results: we will show that it yields a weakly but not strongly aperiodic SFT on any $BS(m,n)$ with $|m| \neq 1$ and $|n| \neq 1$, and a strongly aperiodic SFT on any $BS(1,n)$.

\subsection{Aubrun and Kari's construction}
\label{subsec:theconstruction}

Aubrun and Kari's construction works by encoding orbits of piecewise affine maps applied to real numbers.
We will only apply their construction for piecewise linear maps, and begin this section with the necessary definitions.

\subsubsection{Definitions}

\begin{definition}[(Representation by a sequence)]
  \label{def:balanced_repr}
  Let $i\in\Z$. 
  We say that a binary biinfinite sequence $(x_k)_{k \in \Z} \in \{i, i+1\}^{\Z}$ \emph{represents} a real number $x \in [i,i+1]$ if there exists a growing sequence of intervals $I_1 \subset I_2 \subset \dots \subseteq \Z$ of size at least $1, 2, \dots$ such that:
  \[
  \lim\limits_{k \to +\infty} \dfrac{\sum_{j \in I_k} x_j}{|I_k|} = x.
  \]
\end{definition}
Note that if $(x_k)_{k \in \Z}$ is a representation of $x$, all the shifted sequences $(x_{l+k})_{k \in \Z}$ for every $l \in \Z$ are also representations of $x$. Note that a sequence $(x_k)_{k \in \Z}$ can \textit{a priori} represent several distinct real numbers since different choices of interval sequences may make it converge to different points. A sequence always represents at least one real number by compactness of $[i,i+1]$.

Then, we define a generalization of piecewise linear maps: multiplicative systems. 
The main difference with piecewise linear maps is that points may have several images as definition intervals of different pieces might overlap.

\begin{definition}[Multiplicative system]
  A \emph{multiplicative system} is a finite set of non-zero linear maps \[ \mS = \{ f_1: I_1\rightarrow I'_1, \hdots, f_k: I_k\rightarrow I'_k \} \]
  with $I_i$ and $I'_i$ closed intervals of $\R$.
  Its \emph{inverse} is defined to be 
  \[ \mS^{-1} = \{ f_1^{-1}: I'_1\rightarrow I_1, \hdots, f_k^{-1}: I'_k\rightarrow I_k \} .\]
  The \emph{image} of $x\in\bigcup_i I_i$ is the set
  \[ \mS(x) = \{  y\in \bigcup_j I'_j \mid \exists i, f_i(x) = y \} .\]
  The \emph{$k$-th iteration} of $\mS$ on $x\in\bigcup_i I_i$ is then:
  \[
  \begin{cases}
    \{ y\in\R \mid \exists i_1, \hdots, i_k, f_{i_k}\circ \cdots \circ f_{i_1}(x) = y \}    & \text{if } k>0 \\
    x     & \text{if } k=0 \\
    \{ y\in\R \mid \exists i_{-1}, \hdots, i_{-k}, f^{-1}_{i_{-k}}\circ \cdots \circ f^{-1}_{i_{-1}}(x) = y \}  & \text{if } k<0 \\
  \end{cases}
  .
  \]
\end{definition}

Note that if none of the intervals overlap, $\mS$ can be represented as a piecewise linear function and the definition of inverse and iteration coincide with the usual ones.

\begin{definition}[(Immortal and periodic points)]
  Let $\mS = \{ f_1: I_1\rightarrow I'_1, \hdots, f_k: I_1\rightarrow I'_k \}$ be a multiplicative system.
  The real $x\in \R$ is \emph{immortal} if for all $k\in\Z$,
  \[ \mS^k(x) \cap \bigcup_i I_i \neq \emptyset \]
  A \emph{periodic point} for this system is a point $x\in\R$ such that there exists $k\in\N^*$ such that
  \[ x\in \mS^k(x) . \]
\end{definition}

\begin{definition}[(Level)]
  The \emph{level} of $g\in BS(m,n)$ is the set $\mathcal{L}_g = \{ g a^k \mid k \in \Z \}$.
\end{definition}

When considering a tiling of $BS(m,n)$,
given a line of tiles located between levels $\mathcal{L}_g$ and $\mathcal{L}_{gb^{-1}}$, we talk about the upper side of the line to refer to level $\mathcal{L}_g$, and the lower side of the line to refer to level $\mathcal{L}_{gb^{-1}}$.

\begin{definition}[(Height)]
  The \emph{height} of $g\in BS(m,n)$ is, for any way of writing it as a word in $\{a,b,a^{-1},b^{-1}\}^*$, its number of $b$'s minus its number of $b^{-1}$'s; it is denoted as $||g||_b$. 
\end{definition}
Since the only basic relation in $BS(m,n)$ uses one $b$ and one $b^{-1}$, all writings of $g$ as a word give the same height. Furthermore, it is actually the height of all elements in its level.

\begin{definition}[(Multiplying tileset)]
  A set of tiles $\tau$ \emph{multiplies} by $q\in\Q$ if for any tile $(t_1,\dots,t_m,l,r,b_1,\dots,b_n)\in\tau$ (see \cref{fig:tiles_BSmn} for the notation),
  \begin{equation}\label{eq:def_multiply}
  q\frac{t_1 + \cdots + t_m}{m} + l = \frac{b_1 + \cdots + b_n}{n} + r .
  \end{equation}
\end{definition}

Let $\tau$ be a tileset multiplying by $q\in\Q$.
If we consider a line of $N$ tiles of $\tau$ next to each other without tiling errors (as defined in \cref{subsec:tilings}), as left and right colors match, we can average \cref{eq:def_multiply}:
\begin{equation}\label{eq:k_multiply}
  q t + \frac{l}{N} = b + \frac{r}{N} .
\end{equation}
where $t$ is the average of the top labels of the line and $b$ the average of the bottom ones.
Therefore, if an infinite line has its upper side representing $x\in \R$ and its lower side representing $y\in\R$, taking the limit of \cref{eq:k_multiply} on a well chosen sequence of intervals gives:
\[ qx = y. \]
Hence the name of \emph{multiplying tileset} for $\tau$.

\subsubsection{A multiplying tileset}

Let us define, in a fashion similar to \cite{BS}, a couple of useful functions to build a multiplying tileset.
Let $\alpha_{m,n}:\{a,b,a^{-1}, b^{-1}\}^*\rightarrow \R$ (or just $\alpha$ when $m$ and $n$ are clear) be defined by the recursion:
\begin{align*}
  \begin{cases}
    \alpha(\epsilon)=0 \text{ where } \epsilon \text{ is the empty word}\\
    \alpha(wb)=\alpha(wb^{-1})=\alpha(w)\\
    \alpha(wa)=\alpha(w) + \left(\frac{n}{m}\right)^{||w||_b}\\
    \alpha(wa^{-1})=\alpha(w) - \left(\frac{n}{m}\right)^{||w||_b}.
  \end{cases}
\end{align*}
The map $\alpha$ can be extended to elements of $BS(m,n)$, due to the fact that $\alpha(uba^mb^{-1}v) = \alpha(ua^nv)$ for any pair of words $u$ and $v$ in $\{a,b,a^{-1}, b^{-1}\}^*$: $\alpha(g)$ is then $\alpha(w)$ for any word representing $g$ in the group.


\noindent
Now, we define $\Phi:BS(m,n)\rightarrow \R^2$ as follows:
\[ \Phi(g) = \left( \alpha(g), ||g||_b \right) .\]
The function $\Phi$ can be seen as a projection of every element of $BS(m,n)$ on the euclidean plane $\R^2$.

Finally, let $\lambda:BS(m,n)\rightarrow\R$ be defined as
\[ \lambda(g) = \frac{1}{m} \left( \frac{m}{n} \right)^{||g||_b} \alpha(g) .\]

Let $q\in\Q$ and $I$ an interval, let
\begin{align*}
 t_j(g,x) &:= \lfloor \left(m\lambda(g)+j \right)x \rfloor - \lfloor \left(m\lambda(g)+(j-1) \right)x \rfloor \text{~~for }j=1\dots m\\   
 b_j(g,x) &:= \lfloor \left(n\lambda(g)+j \right)qx \rfloor - \lfloor \left(n\lambda(g)+(j-1) \right)qx \rfloor \text{~~for }j=1\dots n\\               
 l(g,x) &:= \frac{1}{m}q\lfloor m\lambda(g)x \rfloor - \frac{1}{n}\lfloor n\lambda(g)qx \rfloor \\
 r(g,x) &:=  \frac{1}{m} q \lfloor \left(m\lambda(g)+m\right)x \rfloor - \frac{1}{n}\lfloor \left(n\lambda(g)+n\right)qx \rfloor
\end{align*}
Then, we define the tileset $\tau_{q,I}$ as:
\[ \tau_{q,I} = \{ (t_1(g,x), \hdots, t_m(g,x), l(g,x), r(g,x), b_1(g,x), \hdots, b_m(g,x)) \mid g\in BS(m,n), x\in I \} . \]

\begin{figure}[ht]
  \centering
  \includegraphics[scale=0.7]{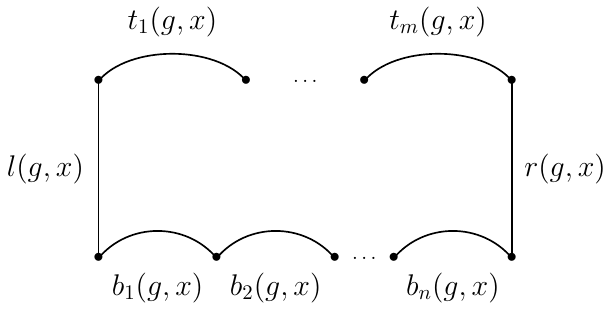}
  \caption{One tile from the tileset $\tau_{q,I}$.}
  \label{fig:tileset_nathalie}
\end{figure}

One can show that \cref{eq:def_multiply} holds for these tiles.

\begin{prop}[{\cite[Proposition 6]{BSdetailed}}]
  \label{prop:multiply}
  Let $a\in\Z$, for any $I\subseteq [a,a+1]$, $\tau_{q,I}$ is a tileset that multiplies by $q$.
\end{prop}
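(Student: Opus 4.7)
The plan is to verify the defining identity
\[
q\,\frac{t_1+\cdots+t_m}{m}+l \;=\; \frac{b_1+\cdots+b_n}{n}+r
\]
directly from the formulas for $t_j,b_j,l,r$, exploiting the fact that the top and bottom labels are by construction differences of consecutive floor values, so their sums telescope.

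Concretely, I would fix an arbitrary tile of $\tau_{q,I}$, determined by some pair $(g,x)\in BS(m,n)\times I$, and first compute
\[
\sum_{j=1}^{m} t_j(g,x) \;=\; \bigl\lfloor (m\lambda(g)+m)x\bigr\rfloor - \bigl\lfloor m\lambda(g)x\bigr\rfloor,
\qquad
\sum_{j=1}^{n} b_j(g,x) \;=\; \bigl\lfloor (n\lambda(g)+n)qx\bigr\rfloor - \bigl\lfloor n\lambda(g)qx\bigr\rfloor,
\]
both being immediate telescopings. Dividing by $m$ and $n$ respectively, multiplying the top side by $q$, and plugging in the definitions of $l(g,x)$ and $r(g,x)$, the left-hand side of the multiplying identity becomes
\[
\frac{q}{m}\bigl\lfloor (m\lambda(g)+m)x\bigr\rfloor - \frac{q}{m}\bigl\lfloor m\lambda(g)x\bigr\rfloor + \frac{q}{m}\bigl\lfloor m\lambda(g)x\bigr\rfloor - \frac{1}{n}\bigl\lfloor n\lambda(g)qx\bigr\rfloor,
\]
and the right-hand side becomes
\[
\frac{1}{n}\bigl\lfloor (n\lambda(g)+n)qx\bigr\rfloor - \frac{1}{n}\bigl\lfloor n\lambda(g)qx\bigr\rfloor + \frac{q}{m}\bigl\lfloor (m\lambda(g)+m)x\bigr\rfloor - \frac{1}{n}\bigl\lfloor (n\lambda(g)+n)qx\bigr\rfloor.
\]
After the obvious cancellations both sides reduce to $\tfrac{q}{m}\lfloor (m\lambda(g)+m)x\rfloor - \tfrac{1}{n}\lfloor n\lambda(g)qx\rfloor$, which proves the identity.

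There is no genuine obstacle here: the claim is essentially an algebraic identity built into the definition. The restriction $I\subseteq[a,a+1]$ plays no role in this particular computation; it is only needed elsewhere to guarantee that the set of colors $t_j,b_j,l,r$ produced as $(g,x)$ ranges over $BS(m,n)\times I$ remains finite, so that $\tau_{q,I}$ is a legitimate Wang tileset. For the multiplying property alone the identity holds for any $x$ and any $g$.
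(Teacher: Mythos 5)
Your verification is correct: the sums of top and bottom labels telescope to $\lfloor (m\lambda(g)+m)x\rfloor - \lfloor m\lambda(g)x\rfloor$ and $\lfloor (n\lambda(g)+n)qx\rfloor - \lfloor n\lambda(g)qx\rfloor$ respectively, and after substituting $l$ and $r$ both sides of the multiplying identity reduce to $\tfrac{q}{m}\lfloor (m\lambda(g)+m)x\rfloor - \tfrac{1}{n}\lfloor n\lambda(g)qx\rfloor$. The paper itself omits this computation, deferring to the cited reference, so there is nothing to compare against; your direct telescoping argument is the standard one, and your observation that the hypothesis $I\subseteq[a,a+1]$ is irrelevant to the identity (it only matters for finiteness of the tileset and for the labels lying in $\{a,a+1\}$) is also accurate.
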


Let us define the \emph{balanced representation of $x$}, which is the biinfinite sequence defined for any $z\in\R$ by
\[ B_j(x,z) = \left\lfloor (z+j)x \right\rfloor - \left\lfloor (z+j-1)x \right\rfloor  . \]
Note that $B_j(x,z)$ does not depends on $z$, and can only take two values: $\lfloor x \rfloor$ or $\lfloor x \rfloor+1$.

\begin{prop}
  \label{prop:balanced_tileset}
  Let $q\in\Q, a\in\Z$ and $I\subseteq [a, a+1]$. The upper side of any tile in $\tau_{q, I}$ is of the form
  \[ B_j(x,m\lambda(g)), B_{j+1}(x,m\lambda(g)), \hdots, B_{j+m}(x,m\lambda(g))  ;\]
  for $x\in I, j\in\Z, g\in BS(m,n)$. In particular, its labels are in $\{a, a+1\}$.
  The lower side is of the form
  \[ B_j(qx,m\lambda(gb^{-1})), B_{j+1}(qx,m\lambda(gb^{-1})), \hdots, B_{j+n}(qx,m\lambda(gb^{-1})). \]
\end{prop}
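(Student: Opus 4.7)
The proof is essentially a definition-chase, so my plan is to unwind the three pieces (top side, bottom side, label range) separately, with the only substantive content being one arithmetic identity relating $\lambda(g)$ and $\lambda(gb^{-1})$.

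First, for the top side, I would simply rewrite the definition. By the very definition of $t_j(g,x)$ and of $B_j(\cdot,\cdot)$, we have
\[
t_j(g,x) = \lfloor (m\lambda(g)+j)x\rfloor - \lfloor (m\lambda(g)+j-1)x\rfloor = B_j(x,m\lambda(g))
\]
for $j=1,\dots,m$, so the top row of the tile indexed by $(g,x)$ reads $B_1(x,m\lambda(g)),\dots,B_m(x,m\lambda(g))$. To allow the indexing to start at an arbitrary $j\in\Z$, I would use the trivial shift identity $B_{j+k}(x,z)=B_k(x,z+j)$ together with the fact that $m\lambda(ga^{j})=m\lambda(g)+j$. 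The latter is a direct consequence of the recursion defining $\alpha$: multiplying $g$ by $a$ on the right adds $(n/m)^{\|g\|_b}$ to $\alpha(g)$, hence $\lambda(ga)=\lambda(g)+1/m$, and iterating gives the claim. Reparametrizing $g\mapsto g a^{j-1}$ then yields the form stated in the proposition.

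Second, for the bottom side, the same unfolding of definitions yields $b_j(g,x)=B_j(qx,n\lambda(g))$ for $j=1,\dots,n$. To rewrite $n\lambda(g)$ as $m\lambda(gb^{-1})$, I would prove the identity
\[
m\lambda(gb^{-1}) = n\lambda(g),
\]
which I regard as the one genuine computation in the proof. It follows from two observations about $\alpha$ and $\|\cdot\|_b$: namely $\alpha(gb^{-1})=\alpha(g)$ (since $\alpha$ is insensitive to $b,b^{-1}$) and $\|gb^{-1}\|_b=\|g\|_b-1$. Substituting into the definition of $\lambda$ gives $\lambda(gb^{-1})=\frac{1}{m}(m/n)^{\|g\|_b-1}\alpha(g)=\frac{n}{m}\lambda(g)$, whence the identity. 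Plugging back in turns $b_j(g,x)$ into $B_j(qx,m\lambda(gb^{-1}))$, and the same shift trick as above adjusts the starting index.

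Third, the assertion that every label belongs to $\{a,a+1\}$ is an elementary floor estimate: writing $(z+j-1)x=N+f$ with $N\in\Z$ and $f\in[0,1)$, one has
\[
B_j(x,z) \;=\; \lfloor N+f+x\rfloor - N \;=\; \lfloor f+x\rfloor,
\]
and since $x\in[a,a+1]$ we have $f+x\in[a,a+2)$, so the floor is either $a$ or $a+1$. The same reasoning applies verbatim to the bottom side with $qx$ in place of $x$, provided $q$ and $I$ are chosen so that $qx$ still lies in some integer unit interval (which is how Aubrun--Kari's construction is used downstream). The only place where there is any real content is the identity $m\lambda(gb^{-1})=n\lambda(g)$; everything else is definition-matching, and I would keep the presentation to a single short paragraph per item.
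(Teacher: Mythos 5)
Your proposal is correct and follows essentially the same route as the paper's proof: a definition-chase identifying $t_j(g,x)=B_j(x,m\lambda(g))$ and $b_j(g,x)=B_j(qx,n\lambda(g))$, the identity $\lambda(gb^{-1})=\tfrac{n}{m}\lambda(g)$ (which the paper merely asserts and you actually verify from the definitions of $\alpha$ and $\|\cdot\|_b$), and a floor estimate for the label range. The extra details you supply (the shift $m\lambda(ga^j)=m\lambda(g)+j$ and the explicit $\lfloor f+x\rfloor$ computation) only make the argument more self-contained, not different in substance.
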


\begin{proof}
  Rewriting the top labels using balanced representation yields
  \[B_j(x,m\lambda(g)), B_{j+1}(x,m\lambda(g)), \hdots, B_{j+m}(x,m\lambda(g)).\]
  Since each $B_j(x,m\lambda(g))$ is either $\floor{x}$ or $\floor{x}+1$, and $x \in [a,a+1]$, one obtains labels in $\{a,a+1\}$.
  For the bottom side, note that $\lambda(gb^{-1}) = \frac{n}{m}\lambda(g)$, which gives labels
  \[B_j(qx,m\lambda(gb^{-1})), B_{j+1}(qx,m\lambda(gb^{-1})), \hdots, B_{j+n}(qx,m\lambda(gb^{-1})).\]
\end{proof}


For our purpose we need to have a finite tileset, because subshifts use a finite alphabet.

\begin{prop}
  Let $a\in\Z$, for any $I\subseteq [a,a+1]$ the tileset $\tau_{q,I}$ is finite.
\end{prop}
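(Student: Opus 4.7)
My plan is to show finiteness label-by-label: since a tile in $\tau_{q,I}$ is determined by its $m$ top labels, $n$ bottom labels, and the two side labels $l(g,x)$, $r(g,x)$, it suffices to prove that each of these takes only finitely many values as $g$ ranges over $BS(m,n)$ and $x$ over $I$.

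For the top and bottom labels, the previous proposition already does the job for the top: each $t_j(g,x)$ lies in $\{a, a+1\}$. The bottom labels $b_j(g,x)$ are the entries of a balanced representation of $qx$; since $I \subseteq [a,a+1]$ and $q \in \Q$ is fixed, $qI$ is a bounded interval, so $\lfloor qx \rfloor$ takes only finitely many integer values, and the same balanced-representation argument yields a finite set of possible $b_j$.

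The main work is bounding the side labels. Writing $\lfloor y \rfloor = y - \{y\}$, the large linear terms cancel and one obtains the clean expressions
\[
l(g,x) = \tfrac{1}{n}\{n\lambda(g)qx\} - \tfrac{q}{m}\{m\lambda(g)x\}, \qquad r(g,x) = \tfrac{1}{n}\{(n\lambda(g)+n)qx\} - \tfrac{q}{m}\{(m\lambda(g)+m)x\}.
\]
Since fractional parts lie in $[0,1)$, both $l(g,x)$ and $r(g,x)$ live in a bounded interval depending only on $q,m,n$ (not on $g$ or $x$).

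To turn boundedness into finiteness, I would use that $q \in \Q$: writing $q = p/s$ with $p,s \in \Z$, the defining formula gives
\[
l(g,x) = \tfrac{p}{ms}\lfloor m\lambda(g)x\rfloor - \tfrac{1}{n}\lfloor n\lambda(g)qx\rfloor \in \tfrac{1}{mns}\Z,
\]
and analogously for $r(g,x)$. A bounded subset of the discrete lattice $\tfrac{1}{mns}\Z$ is finite, so $l$ and $r$ each take only finitely many values. Combining the five finiteness statements shows $\tau_{q,I}$ is finite. No step looks delicate; the only mildly non-obvious move is the cancellation of the unbounded $\lambda(g)x$ terms in $l$ and $r$, which is what forces both bounded range and rational denominators.
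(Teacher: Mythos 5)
Your proof is correct and follows essentially the same route as the paper: both arguments rest on the cancellation of the unbounded $q\lambda(g)x$ terms (you via fractional parts, the paper via the floor inequalities applied to the integer numerator over the common denominator $mnq_2$), which yields a bounded set of rationals with fixed denominator, hence a finite set of side labels. The only cosmetic difference is that the paper dispenses with $r$ by observing $l(ga^m,x)=r(g,x)$, whereas you bound $r$ directly by the symmetric computation.
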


\begin{proof}
  \cref{prop:balanced_tileset} gives us that there are finitely many top and bottom labels. It remains to prove that there are also finitely many left and right labels.

  First of all, one can check that $\lambda(ga^m) = \lambda(g)+1$, and so $l(ga^m, x) = r(g,x)$. Consequently, we simply have to prove that $l$ lies in a finite set.
  Let $q = \frac{q_1}{q_2}$ with $q_1,q_2\in\Z$, and write
  \[  l(g,x) = \frac{n q\lfloor m\lambda(g)x \rfloor - m \lfloor n\lambda(g)qx \rfloor}{mn} = \frac{n q_1 \lfloor m\lambda(g)x \rfloor - m q_2 \lfloor n\lambda(g)qx \rfloor}{mnq_2} .\]
  Since its numerator is an integer bounded by $-nq_1=:k_1$ from below and $mq_2=:k_2$ from above using usual inequalities on the floor function, we have that for any $g\in BS(m,n), x\in I$, $l(g,x)$ is in the finite set
  \[ \left\{  \frac{k_1}{mnq_2}, \frac{k_1+1}{mnq_2}, \hdots, \frac{k_2}{mnq_2} \right\} .\]
\end{proof}

Thanks to the multiplying property of $\tau_{q,[a,a+1]}$, we can use it to encode multiplicative systems in such a way that non-empty tilings corresponds to immortal points of the system. 
If we modify the tileset a little bit, we can encode multiplying systems defined on other intervals than $[a,a+1].$

\begin{theorem}
\label{th:superthm}
  Let $\mS = \{ f_1, \hdots, f_N \}$ be a multiplicative system with 
  \begin{align*}
    f_i: &I_i \rightarrow \R\\
         &x \mapsto q_i x    ,
  \end{align*}
  $q_i\in\Q$ and $I_i$ interval with rational bounds included in some $[a_i,a_i+1], a_i\in\Z$.
  We can explicitly and algorithmically build an SFT $Y_\mS$ with the following properties:
  \begin{enumerate}
    \item any top of a line of tiles in a configuration $y \in Y_\mS$ represents at least one real $x \in \bigcup_i I_i$. \label{itemthrep}
    \item if the top of a line of tiles represents a real $x \in \bigcup_i I_i$, then the bottom of that line represents a real in $\mS(x)$; \label{itemthS}
    \item $Y_\mS \neq \emptyset$ if and only if $\mS$ has an immortal point; \label{itemthnonempt}
  \end{enumerate}
\end{theorem}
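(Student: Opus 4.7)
My plan is to build $Y_\mS$ as a superposition of three independent layers of Wang tiles over $BS(m,n)$, in the same spirit as in \cite{BS,BSdetailed}. The \emph{computational} layer draws from the disjoint union $\bigsqcup_{i=1}^{N} \tau_{q_i,[a_i,a_i+1]}$ of multiplying tilesets, each of which is finite by the previous proposition. The \emph{indexing} layer carries a symbol $i \in \{1,\dots,N\}$ that is constant along each horizontal level and forces every tile of the computational layer in that row to belong to $\tau_{q_i,[a_i,a_i+1]}$. Finally, the \emph{restriction} layer encodes the constraint that each row of index $i$ whose top labels take values in $\{a_i,a_i+1\}$ has an asymptotic average lying in $I_i$ rather than just in $[a_i,a_i+1]$; since $I_i$ has rational endpoints, this can be enforced by the standard superposition of two periodic markers that upper- and lower-bound, in every sufficiently long window, the number of tops equal to $a_i+1$.

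\textbf{Properties (\ref{itemthrep}) and (\ref{itemthS}).} Property (\ref{itemthrep}) is then essentially \cref{prop:balanced_tileset} combined with the restriction layer: if a row has index $i$, its top is a balanced representation $B_{\bullet}(x, m\lambda(g))$ for some $x \in [a_i,a_i+1]$, and the periodic markers squeeze the averages $\sum_{j \in I_k} x_j / |I_k|$ on a cofinal family of intervals into $I_i$, forcing every represented real to lie in $I_i \subseteq \bigcup_j I_j$. Property (\ref{itemthS}) is then immediate from \cref{prop:multiply} and the averaged identity (\ref{eq:k_multiply}): along a row of index $i$ whose top represents $x \in I_i$, the bottom represents $q_i x = f_i(x) \in \mS(x)$.

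\textbf{Property (\ref{itemthnonempt}).} The forward direction is an easy consequence of (\ref{itemthrep}) and (\ref{itemthS}): given $y \in Y_\mS$, fixing any element $g \in BS(m,n)$ and reading levels $\mL_{gb^k}$ for $k \in \Z$ produces reals $x_k \in \bigcup_j I_j$ with $x_{k-1} \in \mS(x_k)$, so $x_0$ is immortal. For the converse, I would start from an immortal $x$ and use a König-style compactness argument: the directed graph whose vertices are pairs $(k, y)$ with $y \in \mS^k(x) \cap \bigcup_j I_j$ and whose edges correspond to a single application of some $f_i$ has infinite forward and backward reach, so one can extract a bi-infinite sequence $(y_k)_{k \in \Z}$ together with indices $(i(k))_{k \in \Z}$ such that $y_{k-1} = q_{i(k)} y_k$. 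I would then tile the strip between levels $\mL_{gb^k}$ and $\mL_{gb^{k-1}}$ using $\tau_{q_{i(k)}, I_{i(k)}}$, with top labels read from the balanced representation $B_\bullet(y_k, m\lambda(\cdot))$ and bottom labels from $B_\bullet(y_{k-1}, m\lambda(\cdot))$; the indexing layer receives $i(k)$ on that strip and the restriction layer is set accordingly.

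\textbf{Main obstacle.} The bulk of the work is checking that this construction is globally coherent over the tree-like Cayley graph of $BS(m,n)$. One must verify that the balanced representation $B_\bullet(y_k, m\lambda(g))$ simultaneously equals the bottom of the strip above $g$ and the top of the strip below $g$; this relies on the identity $\lambda(gb^{-1}) = \tfrac{n}{m}\lambda(g)$ together with the equality of top labels of $\tau_{q_{i(k+1)}, I_{i(k+1)}}$ at level $k$ with the bottom labels of $\tau_{q_{i(k)}, I_{i(k)}}$ at level $k$, both of which are dictated by $y_k$ and are therefore equal point-wise, not merely in average. Once this consistency and the restriction-marker periods are handled, the three properties follow and $Y_\mS$ is algorithmically constructible from $\mS$.
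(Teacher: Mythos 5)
Your proposal follows essentially the same route as the paper: a synchronization of the multiplying tilesets $\tau_{q_i,[a_i,a_i+1]}$ with a per-level index (the paper folds this into the left/right colors rather than a separate layer), bounded-window counting constraints forcing the represented real into $I_i$, and, for the converse of (\ref{itemthnonempt}), an explicit configuration built from a bi-infinite orbit of an immortal point via the balanced representations $B_\bullet(x_k,m\lambda(g))$. The two verifications you flag as the main obstacle --- the level-to-level consistency of labels via $\lambda(gb^{-1})=\tfrac{n}{m}\lambda(g)$ and $\lambda(ga^m)=\lambda(g)+1$, and the fact that balanced representations of reals in $I_i$ satisfy the window constraints --- are exactly the computations the paper carries out, so your outline is sound.
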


\begin{proof}
  We build a tileset $\tau$ performing the computation by the linear functions $f_i: [a_i,a_i+1] \rightarrow \R$; i.e. the linear maps with bigger intervals than the ones defining $\mS$.
  In order to encode the multiplication correctly, one cannot simply take the union of all $\tau_i := \tau_{q_i, [a_i,a_i+1]}$, because tiles coming from different $f_i$ could be mixed on a single line.
  In order to "synchronize" the computations on every line, we create a product alphabet with the left and right colors of the tiles, and the number of the current function being used. This ensures that one line can have tiles from only one of the $\tau_{q_i, [a_i,a_i+1]}$.
  Formally, 
  \[ \tau = \left\{ (t_1,\dots,t_m,(l, i),(r,i),b_1,\dots,b_n) \mid (t_1,\dots,t_m,l,r,b_1,\dots,b_n)\in \tau_{q_i,[a_i,a_i+1]} \right\} .\]
  This way, we can interpret any line of a tiling by $\tau$ as being "of color" $i$ for some $i \in \{1,\dots,N\}$.

  Next, we restrict the intervals of reals that can be represented in each line of the SFT.
  Let us write $I_i = [a_i + \frac{d_1^i}{e_1^i}, a_i + 1 - \frac{d_2^i}{e_2^i}]$, for each $i$.
  $Y_\mS$ will be $X^\tau$ with the following additional local constraints:
  on each line of color $i$, we force the upper side labels to respect
  \begin{align}
    \label{eq:Ys_constraint_1}
  &\text{\textbullet every $e_1^i$ consecutive labels must contain at least $d_1^i$ labels $a_i+1$;} \\
    \label{eq:Ys_constraint_2}
  &\text{\textbullet every $e_2^i$ consecutive labels must contain at least $d_2^i$ labels $a_i$.}
  \end{align}
  Recall that the top labels of any line of color $i$ only has labels $a_i$ and $a_i+1$ by definition of $\tau_i$.
  From this, we can also deduce
  \begin{align*}
  &\text{\textbullet every $e_1^i$ consecutive labels must contain at most $e_1^i - d_1^i$ labels $a_i$;} \\
  &\text{\textbullet every $e_2^i$ consecutive labels must contain at most $e_2^i - d_2^i$ labels $a_i+1$.}
  \end{align*}
  Since these constraints are on $e_1^i$ or $e_2^i$ consecutive labels, and any other constraint from the $\tau_i$'s is on neighboring tiles, $Y_\mS$ is an SFT.

  \medskip

  First, assume that $Y_\mS$ is non-empty and contains some configuration $y$.
  Then, the sequence at the top of each level $\mathcal{L}_{b^k}, k\in \Z$ represents at least one real $x_k$, since it is a sequence made of at most two integers.
  Thanks to the multiplying property of each $\tau_i$ (\cref{prop:multiply}), and the fact that the local rules of every line impose that any real represented belongs to an interval $I_i$, $(x_k)_{k\in\Z}$ is an infinite orbit of $\mS$.
  Indeed, consider a real $x$ represented by the upper side of a line of color $i$. We prove that $x\in I_i = [a + \frac{d_1}{e_1}, a + 1 - \frac{d_2}{e_2}]$ (we drop the $i$ in this paragraph to make notations lighter).
  Let us write $(J_l)_{l\in\N}$ the intervals from \cref{def:balanced_repr} (denoted as $(I_i)$ there) on which we compute the mean for the represented real, and let $r_l$ denote the proportion of $a$'s over $(a+1)$'s in the line representing $x$ restricted to $J_l$.
  Thanks to the two conditions \cref{eq:Ys_constraint_1} and \cref{eq:Ys_constraint_2} (and the two we deduced), we have, for all $l$ in $\N$,
  \begin{equation}
    \label{eq:r}
    \frac{d_2}{e_2 - d_2} \leq r_l \leq \frac{e_1-d_1}{d_1} .
  \end{equation}
  Moreover, since $x$ is the limit of the means computed on each $J_l$, one can show that 
  \begin{equation*}
    x = \lim_{l\rightarrow\infty} a + \frac{1}{1+r_l} .
  \end{equation*}
  Using the left inequality of \cref{eq:r} gives that 
  \[ \frac{1}{1+r_l} \leq \frac{e_2-d_2}{e_2} = 1 - \frac{d_2}{e_2} .\]
  So
  \[ x \leq a + 1 - \frac{d_2}{e_2}  .\]
  Similarly, the right inequality of \cref{eq:r} gives
  \[ x\geq a+\frac{d_1}{e_1}, \]
  and so $x\in I_i$.
  This notably ensures \cref{itemthrep} and, by the use of the $\tau_i$'s, \cref{itemthS}.

  \medskip

  Now, assume that $S$ has an immortal point $x$. 
  Then there exists a sequence $(i_k)_{k\in\Z} \in \{1, \dots, N\}^\Z$ such that if we define
  \[ x_k = 
  \begin{cases}
    x                & \text{if } k=0 \\
    f_{i_{k-1}}(x_{k-1}) = q_{i_{k-1}} x_{k-1}  & \text{if } k>0 \\
    f^{-1}_{i_{k}}(x_{k+1}) = \frac{1}{q_{i_{k}}} x_{k+1} & \text{if } k<0 \\
  \end{cases} , \]
  then for all $k\in\Z, x_k\in \bigcup_i I_i$.
  For every $g\in BS(m,n)$, we place at $g$ a tile from the tileset $\tau_{i_k}$ with $k = -||g||_b$, with colors:
  \begin{align*}
   t_j: t_j(g,x_k) & = \lfloor \left(m\lambda(g)+j \right)x_k \rfloor - \lfloor \left(m\lambda(g)+(j-1) \right)x_k \rfloor \text{~~for } j=1\dots m\\   
   b_j: b_j(g,x_k) & = \lfloor \left(n\lambda(g)+j \right)q_{i_k} x_k \rfloor - \lfloor \left(n\lambda(g)+(j-1) \right)q_{i_k} x_k \rfloor \text{~~for } j=1\dots n\\               
   l: l(g,x_k) & = \left(\frac{1}{m}q_{i_k}\lfloor m\lambda(g)x_k \rfloor - \frac{1}{n}\lfloor n\lambda(g)q_{i_k}x_k \rfloor ,~i_k \right)\\
   r: r(g,x_k) & =  \left(\frac{1}{m} q_{i_k} \lfloor \left(m\lambda(g)+m\right)x_k \rfloor - \frac{1}{n}\lfloor \left(n\lambda(g)+n\right)q_{i_k}x_k \rfloor ,~i_k\right)
  \end{align*}

  These tiles are obviously from the tileset $\tau$. Recall that we have
  \begin{equation}
    \label{eq:lambdaa} 
    \lambda(ga^m) = \lambda(g) + 1 , 
  \end{equation}

  \[ \lambda(gb) = \frac{m}{n}\lambda(g) . \]
  And therefore,
  \begin{align*}
  l(ga^m,x_k) 
  & = \left(\frac{1}{m}q\lfloor m\lambda(ga^m)x_k \rfloor - \frac{1}{n}\lfloor n\lambda(ga^m)qx_k \rfloor ,~i_k \right) \\
  & = \left(\frac{1}{m}q_{i_k}\lfloor m(\lambda(g)+1)x_k \rfloor - \frac{1}{n}\lfloor n(\lambda(g)+1)q_{i_k}x_k \rfloor ,~i_k \right) = r(g, x_k) ,
  \end{align*}
  and for any $j \in \{1,\dots,m\}, p \in \{1,\dots,n\}$,
  \begin{align*}
  & b_p(ga^{j-p}b,x_{-||ga^{j-p}b||_b}) \\
  & = \left\lfloor \left(n\lambda(ga^{j-p}b)+p \right)q_{i_{-||ga^{j-p}b||_b}}x_{-||ga^{j-p}b||_b} \right\rfloor - \left\lfloor \left(n\lambda(ga^{j-p}b)+(p-1) \right)q_{i_{-||ga^{j-p}b||_b}}x_{-||ga^{j-p}b||_b} \right\rfloor \\
  & = \left\lfloor \left(n\frac{m}{n}\lambda(ga^{j-p})+p \right)q_{i_{-||g||_b-1}} x_{-||g||_b-1} \right\rfloor - \left\lfloor \left(n\frac{m}{n}\lambda(ga^{j-p})+(p-1) \right) q_{i_{-||g||_b-1}} x_{-||g||_b-1} \right\rfloor\\
  & = \left\lfloor \left(m(\lambda(g)+\frac{j-p}{m})+p \right)q_{i_{-||g||_b-1}} x_{-||g||_b-1} \right\rfloor - \left\lfloor \left(m(\lambda(g)+\frac{j-p}{m})+(p-1) \right) q_{i_{-||g||_b-1}} x_{-||g||_b-1} \right\rfloor\\
  & = \left\lfloor \left(m\lambda(g)+j \right)q_{i_{-||g||_b-1}} x_{-||g||_b-1} \right\rfloor - \left\lfloor \left(m\lambda(g)+(j-1) \right) q_{i_{-||g||_b-1}} x_{-||g||_b-1} \right\rfloor\\
  & = \left\lfloor \left(m\lambda(g)+j \right)x_{-||g||_b} \right\rfloor - \left\lfloor \left(m\lambda(g)+(j-1) \right)x_{-||g||_b} \right\rfloor\\
  & = t_j(g, x_{-||g||_b})
  \end{align*}

  It remains to show that the labels at the top of every line follow the conditions (\ref{eq:Ys_constraint_1}) and (\ref{eq:Ys_constraint_2}).
  Fix some $g \in BS(m,n)$ and consider the level $\mathcal{L}_g$. Fix $k=-||g||_b$, denote $x:=x_k$, $i:=i_k$ and drop the $i$ in the other variables to simplify notations.
  For $j\in\Z$, we write $w_j$ for the label at position $j$ of the considered level, with $t_1(g,x_k)$ being position $1$.
  Remark that
  \[ w_j = \lfloor \left(m\lambda(g)+j \right)x \rfloor - \lfloor \left(m\lambda(g)+(j-1) \right)x \rfloor \]
  holds for all $j\in\Z$ thanks to \cref{eq:lambdaa}.
  Assume that $x\geq a+\frac{d_1}{e_1}$.
  Let us write $N_{j_0}$ the number of labels $a$ that appear in the word $u = w_{j_0+1}w_{j_0+2} \cdots w_{j_0+e_1}$.
  If we sum all the labels of $u$, we have on the one hand
  \[ \sum_{j=j_0+1}^{j_0+e_1} w_j = N_{j_0} a + (e_1 - N_{j_0}) (a+1) = e_1(a+1) - N_{j_0} .\]
  And on the other hand,
  \[ \sum_{j=j_0+1}^{j_0+e_1} w_j = \lfloor \left(m\lambda(g)+ j_0+e_1 \right)x \rfloor - \lfloor \left(m\lambda(g)+ j_0 \right)x \rfloor .\]
  Therefore,
  \[ e_1(a+1) - N_{j_0} > \left(m\lambda(g)+ j_0+e_1 \right)x -1 - \left(m\lambda(g)+ j_0 \right)x = e_1 x - 1 \geq e_1 a+d_1-1 ,\]
  which can be rearranged to
  \[ N_{j_0} \leq e_1 - d_1 ,\]
  which implies (\ref{eq:Ys_constraint_1}).
  In the same way, if we assume $x\leq a+1-\frac{d_2}{e_2}$, we can show $N_{j_0} \geq d_2$, which is exactly (\ref{eq:Ys_constraint_2}).
  This shows \cref{itemthnonempt}.
\end{proof}

\begin{remark}
  Instead of considering a multiplicative system with rational bounds for the intervals, one can dilate them and encode a dilated system with integer bounds, which is equivalent to the original. For instance, instead of using a linear function $f$ on $[a+\frac{d_1}{e_1},a+1-\frac{d_2}{e_2}]$, one can consider that function $f$ multiplied by $e_1e_2$, on $[ae_1e_2 + d_1e_2, ae_1e_2 + e_1e_2 - d_2e_1]$ which has integer bounds.
  However, although this would yield a shorter proof, the results would only hold through some sort of conjugation, and we wanted to effectively build a Wang tileset for any multiplicative system.
\end{remark}

Note that we only build a multiplying tileset in the current paper. 
Aubrun and Kari provided the details of the encoding of any finite set of affine maps into a tileset of $BS(m,n)$ in \cite{BSdetailed}.

\subsection{A weakly aperiodic SFT on \texorpdfstring{$BS(m,n)$}{BS(m,n)}}
\label{subsec:weakmn}

The SFT $Y_\mS$ previously defined on a given $BS(m,n)$ is also linked to the periodicity of $\mS$.

\begin{theorem}
  If $\mS$ has no periodic point, then $Y_\mS$ is a weakly aperiodic SFT.
\end{theorem}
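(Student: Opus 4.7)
My plan is to prove the contrapositive: any $y \in Y_\mS$ with finite $BS(m,n)$-orbit must give rise to a periodic point of $\mS$. I fix such a $y$, so $Stab_{BS(m,n)}(y)$ has finite index in $BS(m,n)$. By the usual coset-pigeonhole argument (the map $j \mapsto b^j \, Stab_{BS(m,n)}(y)$ into a finite set cannot be injective), some power $b^q$ with $q \geq 1$ lies in $Stab_{BS(m,n)}(y)$; equivalently $b^q \cdot y = y$ and $y_{b^{-q}h} = y_h$ for every $h \in BS(m,n)$. In particular the tile sequences of $y$ along the levels $\mathcal{L}_e$ and $\mathcal{L}_{b^{-q}}$ coincide.

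Next I would establish that each level's upper side uniquely represents one real. For any $g \in BS(m,n)$, a short computation with the left action shows that $gag^{-1}$ acts on configurations by translating the restriction to $\mathcal{L}_g$ by one tile position: $((gag^{-1}) \cdot y)_{ga^j} = y_{ga^{j-1}}$. Applying the coset-pigeonhole to $gag^{-1}$ yields $k_g \geq 1$ with $(gag^{-1})^{k_g} \in Stab_{BS(m,n)}(y)$, so the restriction $y|_{\mathcal{L}_g}$ is $k_g$-periodic as a biinfinite $\Z$-indexed sequence of tiles, and hence of top labels. A periodic sequence in the two-letter alphabet $\{a_i, a_i+1\}$ has a unique asymptotic frequency, so by \cref{def:balanced_repr} it represents a single real: the rational given by the density of $a_i+1$'s in one period. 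Applied to $g = e$ and $g = b^{-q}$, together with the identity of tile sequences above, this forces $\mathcal{L}_e$ and $\mathcal{L}_{b^{-q}}$ to represent the same real $x_0 \in \bigcup_i I_i$.

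Finally I close the $\mS$-orbit. By \cref{th:superthm}(\ref{itemthrep}) the upper side of the line between $\mathcal{L}_e$ and $\mathcal{L}_{b^{-1}}$ represents $x_0$; iterating \cref{th:superthm}(\ref{itemthS}) downwards $q$ times yields reals $x_1, \ldots, x_q$ with $x_{j+1} \in \mS(x_j)$, the real $x_j$ being represented by $\mathcal{L}_{b^{-j}}$, so $x_q \in \mS^q(x_0)$. Uniqueness of the real represented at $\mathcal{L}_{b^{-q}}$ forces $x_q = x_0$, whence $x_0 \in \mS^q(x_0)$ is a periodic point of $\mS$, contradicting the hypothesis. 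The main obstacle is the horizontal-periodicity step: one has to verify carefully that $gag^{-1}$ shifts $\mathcal{L}_g$ by a single tile position, and to accept that different levels may require different periods (since $\langle a \rangle$ is not normal in $BS(m,n)$, no single power of $a$ will simultaneously work for every level). Once this is in hand, the rest is a routine combination of pigeonhole and \cref{th:superthm}.
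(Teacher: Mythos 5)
Your proof is correct and follows essentially the same route as the paper: argue the contrapositive, use the conjugates $ga^{k}g^{-1}$ of $a$ to force each level to be horizontally periodic and hence to represent a unique rational, and then find two levels at different heights carrying the same rational, which yields a periodic point of $\mS$. The only cosmetic difference is that you locate the two coinciding levels directly from $b^{q}\in Stab_{BS(m,n)}(y)$, whereas the paper first extracts a uniform horizontal period $p=i!$ for all levels and then pigeonholes over the finitely many rationals of denominator $p$; both steps are valid.
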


\begin{proof}
We prove the contrapositive.

Assume that $Y_S$ has a strongly periodic configuration $y$, i.e. $|Orb_{BS(m,n)}(y)|=i$ with $i \in \N$.
In particular, each set $\{ ga^{k} g^{-1} \cdot y \mid k\in\Z \}$ is finite of cardinality lesser than $i$, and therefore for every $g\in BS(m,n)$, there exists $k_g \leq i$ such that $ga^{k_g} g^{-1} \cdot y = y$.
If we define $p=i!$, we obtain that for all $g \in BS(m,n)$, $ga^{p} g^{-1} \cdot y = y$.

Let $g \in BS(m,n)$. Let $h \in \mathcal{L}_g$, i.e. there exists some $k \in \Z$ so that $h = ga^k$. Then
\begin{align*}
y_h & = (ga^{p} g^{-1} \cdot y)_h\\
& = y_{ga^{-p} g^{-1}h}\\
& = y_{ga^{k-p}}\\
& = y_{ha^{-p}}
\end{align*}
which means that the level $\mathcal{L}_g$ is $p$-periodic.

Therefore any level is $p$-periodic, and consequently represents a unique rational number $\frac{c}{p}$. Since the alphabet of $Y_\mS$ is finite, there are only finitely many different such rationals. Consequently, there exist two levels $\mathcal{L}_{gb^l}$ and $\mathcal{L}$ with $l>0$ that represent the same rational number $x$.
By the multiplicative property of $Y_\mS$,
\[ x \in\mS^l(x), \]
which means that $x$ is a periodic point for $\mS$.
\end{proof}

The consequence of this theorem is that, to obtain a weakly aperiodic SFT on $BS(m,n)$, we only need to explicitly build a multiplicative system with an immortal point but no periodic points.
For the rest of this section, let
\begin{align*}
  f_1\colon[\frac{1}{3},1] &\rightarrow[\frac{2}{3},2] \\
  x & \mapsto 2x
\end{align*}

\begin{align*}
  f_2\colon[1,2] &\rightarrow[\frac{1}{3},\frac{2}{3}]  \\
  x & \mapsto \frac{1}{3}x
\end{align*}
and
\[ \mS_0 = \{ f_1, f_2 \} .\]

It is easy to see that this system has immortal points (in fact, all points of $[\frac{1}{3}, 2]$ are immortal).

\begin{prop}
  $\mS_0$ has immortal points.
\end{prop}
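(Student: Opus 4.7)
The plan is to show that every point in $I_1 \cup I_2 = [\tfrac{1}{3}, 2]$ is in fact immortal, which will imply the claim. The key observation is that this interval is invariant under $\mS_0$ in both the forward and backward directions.

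First, I would record the following elementary inclusions, each obtained by a direct computation: the union of the domains is $I_1 \cup I_2 = [\tfrac{1}{3},1] \cup [1,2] = [\tfrac{1}{3},2]$, and the union of the images is $f_1(I_1) \cup f_2(I_2) = [\tfrac{2}{3},2] \cup [\tfrac{1}{3}, \tfrac{2}{3}] = [\tfrac{1}{3}, 2]$.

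Next, I would prove by induction on $k \geq 0$ that for every $x \in [\tfrac{1}{3},2]$ one has $\mS_0^k(x) \cap [\tfrac{1}{3},2] \neq \emptyset$. For the forward step, given $x \in [\tfrac{1}{3},2]$, observe that $x \in I_1$ or $x \in I_2$; in the first case $f_1(x) = 2x \in [\tfrac{2}{3},2] \subseteq [\tfrac{1}{3},2]$, and in the second case $f_2(x) = x/3 \in [\tfrac{1}{3},\tfrac{2}{3}] \subseteq [\tfrac{1}{3},2]$, so in either case we obtain an iterate lying in $[\tfrac{1}{3},2]$ from which the induction continues. Symmetrically, for the backward direction, I would note that any $x \in [\tfrac{1}{3},2]$ belongs either to $[\tfrac{2}{3},2] = f_1(I_1)$, in which case $f_1^{-1}(x) = x/2 \in [\tfrac{1}{3},1] \subseteq [\tfrac{1}{3},2]$, or to $[\tfrac{1}{3}, \tfrac{2}{3}] = f_2(I_2)$, in which case $f_2^{-1}(x) = 3x \in [1,2] \subseteq [\tfrac{1}{3},2]$. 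This gives the analogous statement for $k \leq 0$.

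Combining both directions, any $x \in [\tfrac{1}{3},2]$ satisfies $\mS_0^k(x) \cap \bigcup_i I_i \neq \emptyset$ for every $k \in \Z$, and so is immortal; exhibiting, for instance, $x = 1$ (or simply noting that $[\tfrac{1}{3},2]$ is nonempty) completes the proof. There is no real obstacle here: the only care needed is to check, via the explicit definitions of $f_1, f_2$, that the images and preimages of $[\tfrac{1}{3},2]$ under each branch stay inside the union of domains, which is a finite list of interval arithmetic identities.
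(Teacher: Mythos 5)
Your proof is correct and follows exactly the route the paper intends: the paper simply asserts (without writing out the details) that all points of $[\tfrac{1}{3},2]$ are immortal, and your interval-arithmetic verification that $[\tfrac{1}{3},2]$ equals both the union of domains and the union of images, together with the forward/backward induction, is precisely the omitted argument.
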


Additionally, since $2$ and $3$ are relatively prime:

\begin{prop}
  $\mS_0$ has no periodic points.
\end{prop}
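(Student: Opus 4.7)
The plan is to unwind the definition of a periodic point and show that any non-trivial composition of $f_1$ and $f_2$ acts as multiplication by a rational of the form $2^a/3^b$ with $a+b\geq 1$, which can never be the identity.

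More concretely, suppose for contradiction that there is some $x \in \R$ and some $k \in \N^*$ with $x \in \mS_0^k(x)$. By the definition of the $k$-th iteration, there exist $i_1,\dots,i_k \in \{1,2\}$ such that $f_{i_k}\circ \cdots \circ f_{i_1}(x) = x$. Since both $f_1$ and $f_2$ are linear maps (multiplication by $2$ and by $1/3$ respectively), the composition is also linear: if $a$ denotes the number of indices $j$ with $i_j = 1$ and $b$ the number with $i_j = 2$, then $f_{i_k}\circ \cdots \circ f_{i_1}(x) = \frac{2^a}{3^b} x$. So the equation becomes $\frac{2^a}{3^b}x = x$ with $a + b = k \geq 1$.

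First I would dispense with the case $x = 0$: it does not belong to the domain $[1/3,1]\cup[1,2]$ of $\mS_0$, so it cannot be a periodic point. For any other $x$, one can divide by $x$ and obtain $2^a = 3^b$. Since $\gcd(2,3)=1$, the unique factorization in $\Z$ forces $a = b = 0$, contradicting $a + b \geq 1$. Hence no such $x$ exists, and $\mS_0$ has no periodic point.

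There is no real obstacle in this proof; the main thing to be careful about is that one is using the correct definition of iteration (forward composition of $f_1,f_2$, not their inverses) and that the conclusion of the equation $2^a = 3^b$ truly uses nothing more than the coprimality of $2$ and $3$, as hinted by the paper just before the proposition.
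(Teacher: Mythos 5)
Your proof is correct and is exactly the argument the paper intends: the paper states the proposition with only the remark that $2$ and $3$ are relatively prime, and your write-up simply fills in the routine details (a $k$-fold composition acts as multiplication by $2^a/3^b$ with $a+b=k\geq 1$, so a fixed point would force $x=0$, which is outside the domain, or $2^a=3^b$, which is impossible by coprimality).
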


\begin{corollary}
  $Y_{\mS_0}$ is non-empty and weakly aperiodic.
\end{corollary}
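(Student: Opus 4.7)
The plan is to assemble the two propositions immediately preceding this corollary with the two main theorems of the section. Concretely, the first preceding proposition provides an immortal point for $\mS_0$, the second preceding proposition provides the absence of periodic points, and the combination of Theorem~\ref{th:superthm} (item~\ref{itemthnonempt}) with the preceding theorem converts these two facts about $\mS_0$ into the two desired properties of $Y_{\mS_0}$.

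First, I would justify the two preceding propositions, since although the text declares them easy, they are what the corollary relies on. For immortal points, I would observe that $f_1$ sends $[\tfrac{1}{3},1]$ into $[\tfrac{2}{3},2]\subseteq[\tfrac{1}{3},2]$ and that $f_2$ sends $[1,2]$ into $[\tfrac{1}{3},\tfrac{2}{3}]\subseteq[\tfrac{1}{3},2]$, so any $x\in[\tfrac{1}{3},2]$ admits a forward orbit that stays inside the union of the domains (pick $f_1$ or $f_2$ depending on whether the current point lies in $[\tfrac{1}{3},1]$ or $[1,2]$, with either choice being legal on the overlap $\{1\}$), and an analogous argument works in the backward direction using $\mS_0^{-1}$. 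For the absence of periodic points, I would note that any periodic orbit of length $k$ would correspond to a composition equal to the identity built from the slopes $2$ and $\tfrac{1}{3}$; if such a composition uses $f_1$ exactly $a$ times and $f_2$ exactly $b$ times with $a+b=k\geq 1$, then it multiplies $x$ by $2^a 3^{-b}$, so requiring $x=2^a 3^{-b}x$ forces $2^a = 3^b$, which by the coprimality of $2$ and $3$ gives $a=b=0$, a contradiction.

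Given these two facts, the corollary is immediate. Non-emptiness of $Y_{\mS_0}$ follows from item~\ref{itemthnonempt} of Theorem~\ref{th:superthm}, which states that $Y_\mS \neq \emptyset$ if and only if $\mS$ has an immortal point. Weak aperiodicity follows from the theorem directly above the corollary, which states that if $\mS$ has no periodic point then $Y_\mS$ is weakly aperiodic; the heavy lifting there was the observation that a strongly periodic configuration forces each level to be $p$-periodic for a common $p$, hence to represent a single rational of the form $c/p$, and that the finite alphabet then yields two levels $\mathcal{L}_{gb^l}$ and $\mathcal{L}_g$ representing the same rational, producing a periodic point of $\mS$ via the multiplying property.

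The main ``obstacle'' is therefore not really an obstacle at all: once the preceding propositions and theorems are in hand, the corollary is a pure bookkeeping step. The only point requiring minor care is making sure the informal justifications of the two propositions (immortality and aperiodicity of $\mS_0$) are phrased rigorously enough to plug into the statements of the two theorems, which is handled by the coprimality argument above.
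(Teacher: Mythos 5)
Your proposal is correct and matches the paper's (implicit) argument exactly: the corollary is obtained by combining the immortality of points of $\mS_0$ with item~3 of Theorem~\ref{th:superthm} for non-emptiness, and the absence of periodic points with the preceding theorem for weak aperiodicity. Your added justifications of the two propositions (forward/backward invariance of $[\tfrac{1}{3},2]$ for immortality, and $2^a=3^b\Rightarrow a=b=0$ for aperiodicity) are exactly the arguments the paper leaves to the reader.
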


However, this construction does not avoid weakly periodic configurations when $m$ and $n$ are not 1, as already remarked by Aubrun and Kari.

\begin{prop}
  \label{prop:AKperiod}
  For any $m,n > 1$, $Y_{\mS_0}$ on $BS(m,n)$ contains a weakly periodic tiling, with period $p = bab^{-1}aba^{-1}b^{-1}a^{-1}$.
\end{prop}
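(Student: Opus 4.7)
The plan is to exhibit a specific configuration in $Y_{\mS_0}$ whose stabilizer contains $p$. Such a configuration will be built by running the explicit construction from the proof of \cref{th:superthm} on any immortal orbit $(x_k)_{k\in\Z}$ of $\mS_0$ (which exists by the preceding propositions). Call the resulting tiling $y$. The central observation is that in that construction, the tile placed at $g \in BS(m,n)$ is determined entirely by $\lambda(g)$ and $x_{-||g||_b}$; since $\lambda(g) = \frac{1}{m}(m/n)^{||g||_b}\alpha(g)$, the tile at $g$ is actually a function only of $\Phi(g) = (\alpha(g), ||g||_b)$. Consequently, to prove $p \cdot y = y$, i.e.\ $y_{p^{-1}h} = y_h$ for every $h$, it suffices to prove $\Phi(p^{-1}h) = \Phi(h)$ for all $h \in BS(m,n)$.

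To do this, I would first establish a twisted-homomorphism identity for $\alpha$:
\[ \alpha(gh) \;=\; \alpha(g) + \left(\tfrac{n}{m}\right)^{||g||_b}\alpha(h), \]
by a routine induction on the word length of $h$ (the four cases $h = h'a^{\pm 1}$, $h = h'b^{\pm 1}$ are immediate from the recursive definition of $\alpha$). Then a direct evaluation along the word $p = bab^{-1}aba^{-1}b^{-1}a^{-1}$ gives $\alpha(p) = 0$ and $||p||_b = 0$. Plugging $(g,h) = (p^{-1},p)$ into the identity and using $\alpha(e)=0$ together with $||p^{-1}||_b = 0$ yields $\alpha(p^{-1}) = 0$; applying the identity once more with $g = p^{-1}$ and arbitrary $h$ gives $\alpha(p^{-1}h) = \alpha(h)$, and of course $||p^{-1}h||_b = ||h||_b$. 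Hence $\Phi(p^{-1}h) = \Phi(h)$ as required.

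It remains to check that $p$ is a nontrivial element of $BS(m,n)$ when $m,n>1$, so that the period is genuine. I would invoke Britton's lemma: viewing $BS(m,n)$ as the HNN extension of $\langle a\rangle \cong \Z$ with stable letter $b$ conjugating $m\Z$ to $n\Z$, the word $b\cdot a\cdot b^{-1}\cdot a\cdot b\cdot a^{-1}\cdot b^{-1}\cdot a^{-1}$ contains no pinch (no subword $ba^jb^{-1}$ with $j\in m\Z$ nor $b^{-1}a^jb$ with $j\in n\Z$), because every interior $a$-exponent is $\pm 1$ and $m,n>1$. Thus $p\neq e$, and $y$ is weakly periodic.

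The main obstacle is conceptual rather than computational: one must recognise that the construction of \cref{th:superthm} factors through the projection $\Phi$, so that any element with $\Phi(g) = (0,0)$ belonging to the ``kernel'' of this projection automatically stabilizes any tiling produced by the construction. Once this is isolated, the proof reduces to the routine (but slightly tedious) verification of the twisted-homomorphism identity and the numerical computation $\alpha(p) = 0$, plus a short appeal to Britton's lemma to ensure $p\neq e$.
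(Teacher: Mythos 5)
Your proposal is correct and follows essentially the same route as the paper: it reuses the explicit configuration built in the proof of \cref{th:superthm}, observes that each tile depends on $g$ only through $\lambda(g)$ and $||g||_b$ (the paper isolates this as \cref{lemmap}, proving $\alpha(pg)=\alpha(g)$ via the same twisted-homomorphism identity and the computation $\alpha(p)=\frac{n}{m}+1-\frac{n}{m}-1=0$), and concludes with the identical Britton's-lemma argument for $p\neq e$. The only cosmetic difference is that you verify $p\cdot y=y$ while the paper verifies $p^{-1}\cdot y=y$; since stabilizers are subgroups, these are equivalent.
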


To prove it, we need the following lemma:

\begin{lemma}
  \label{lemmap}
  Let $p = bab^{-1}aba^{-1}b^{-1}a^{-1}$.
  For any $g \in BS(m,n)$, $\alpha(pg) = \alpha(g)$.
\end{lemma}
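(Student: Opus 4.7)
The strategy is to establish a multiplicative-additive decomposition formula for $\alpha$ when applied to a concatenation, and then reduce the lemma to the two simple computations $\alpha(p)=0$ and $||p||_b=0$.

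\textbf{Step 1 (Key cocycle-type identity).} First I would prove that for any two words $u,v\in\{a,b,a^{-1},b^{-1}\}^{*}$,
\[
  \alpha(uv) \;=\; \alpha(u) \;+\; \Bigl(\tfrac{n}{m}\Bigr)^{||u||_b}\alpha(v).
\]
This is the natural formula one sees by staring at the recursive definition: appending a suffix $v$ after $u$ contributes the same terms as $\alpha(v)$, except each occurrence of $a^{\pm 1}$ is evaluated at a height shifted up by $||u||_b$, which produces the global factor $(n/m)^{||u||_b}$. The cleanest way to prove it is by induction on $|v|$. The base case $v=\epsilon$ is immediate. For the inductive step I would split on the last letter of $v=v'x$: the cases $x=b$ and $x=b^{-1}$ are trivial since both $\alpha$ and $||\cdot||_b$ are unchanged on the relevant words; the cases $x=a$ and $x=a^{-1}$ use $||uv'||_b=||u||_b+||v'||_b$ so that the new contribution $\pm(n/m)^{||uv'||_b}$ factors as $(n/m)^{||u||_b}\cdot(\pm(n/m)^{||v'||_b})$, matching exactly $(n/m)^{||u||_b}\alpha(v'x)$ after applying the induction hypothesis.

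\textbf{Step 2 (Evaluate on $p$).} Next I would apply the recursive definition directly to $p=bab^{-1}aba^{-1}b^{-1}a^{-1}$, tracking the running value of $\alpha$ and $||\cdot||_b$ letter by letter. Writing $q=n/m$, the heights at which the four $a^{\pm 1}$ symbols sit are $1,0,1,0$ with signs $+,+,-,-$, yielding
\[
  \alpha(p) \;=\; q + 1 - q - 1 \;=\; 0,
\]
and the $b$'s and $b^{-1}$'s balance out to give $||p||_b=0$.

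\textbf{Step 3 (Conclude).} Finally, writing $g$ as any word $w$ representing it (so that $pg$ is represented by the concatenation $pw$, and $\alpha$ is well-defined on $BS(m,n)$ since the relation of $BS(m,n)$ preserves $\alpha$, as noted in the paper), apply Step~1 with $u=p$ and $v=w$:
\[
  \alpha(pg) \;=\; \alpha(p) + \Bigl(\tfrac{n}{m}\Bigr)^{||p||_b}\alpha(g) \;=\; 0 + 1\cdot\alpha(g) \;=\; \alpha(g).
\]
The only subtle step is Step~1, and even there the only risk is bookkeeping; there is no real obstacle. Steps 2 and 3 are then immediate.
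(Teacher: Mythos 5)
Your proof is correct and follows essentially the same route as the paper, which likewise establishes $\alpha(pg)=\alpha(p)+\alpha(g)$ by induction on the length of the suffix (using $||p||_b=0$) and then computes $\alpha(p)=\frac{n}{m}+1-\frac{n}{m}-1=0$. The only difference is that you prove the general cocycle identity $\alpha(uv)=\alpha(u)+(\frac{n}{m})^{||u||_b}\alpha(v)$ rather than just its specialization to prefixes of height zero, which is a harmless (and slightly more informative) strengthening.
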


\begin{proof}
  Since $||p||_b = 0$, using the definition of $\alpha$ it is easy to show that $\alpha(pg) = \alpha(p) + \alpha(g)$ by recurrence on the length of $g$.
  Then, $\alpha(p) = \frac{n}{m} + 1 - \frac{n}{m} - 1 = 0$.
\end{proof}

\begin{proof}[Proof of \cref{prop:AKperiod}]
We happen to have built a periodic configuration already: the one from the proof of \cref{th:superthm}.

Indeed, let $x$ be any real in $[\frac{1}{3}, 2]$, since they are all immortal for $\mS_0$. Then there exists a sequence $(i_k)_{k\in\Z} \in \{1, 2\}^\Z$ such that if we define
\[ x_k = 
\begin{cases}
  x                & \text{if } k=0 \\
  f_{i_{k-1}}(x_{k-1}) = q_{i_{k-1}} x_{k-1}  & \text{if } k>0 \\
  f^{-1}_{i_{k}}(x_{k+1}) = \frac{1}{q_{i_{k}}} x_{k+1} & \text{if } k<0 \\
\end{cases} , \]
then for all $k\in\Z, x_k\in [\frac{1}{3}, 2]$.
For every $g\in BS(m,n)$, we place at $g$ a tile from the tileset $\tau_{i_k}$ with $k = -||g||_b$, with colors:
\begin{align*}
  t_j: t_j(g,x_k) & = \lfloor \left(m\lambda(g)+j \right)x_k \rfloor - \lfloor \left(m\lambda(g)+(j-1) \right)x_k \rfloor \text{~~for } j=1\dots m\\   
  b_j: b_j(g,x_k) & = \lfloor \left(n\lambda(g)+j \right)q_{i_k} x_k \rfloor - \lfloor \left(n\lambda(g)+(j-1) \right)q_{i_k} x_k \rfloor \text{~~for } j=1\dots n\\               
  l: l(g,x_k) & = \left(\frac{1}{m}q_{i_k}\lfloor m\lambda(g)x_k \rfloor - \frac{1}{n}\lfloor n\lambda(g)q_{i_k}x_k \rfloor ,~i_k \right)\\
  r: r(g,x_k) & =  \left(\frac{1}{m} q_{i_k} \lfloor \left(m\lambda(g)+m\right)x_k \rfloor - \frac{1}{n}\lfloor \left(n\lambda(g)+n\right)q_{i_k}x_k \rfloor ,~i_k\right)
\end{align*}

We already checked that the resulting tiling $y$ was in $Y_{\mS_0}$, see the proof of \cref{th:superthm}. 
It remains to show that the tiles at $g$ and at $pg$ are the same for all $g\in BS(m,n)$, to conclude that $p^{-1} \cdot y = y$, or equivalently that $p$ is a period of $y$.
This is actually surprisingly easily, considering that for any $g \in BS(m,n)$, $||pg||_b = ||g||_b$ so they use the same integer $k$ and the same real $x_k$; and $\lambda(g) = \lambda(pg)$, see \cref{lemmap}. The tiles have the same labels as a consequence of this.
  
The $y$ consequently defined is $p$-periodic. The only thing left to show is that $p$ is a nontrivial element of $BS(m,n)$ as long as $m,n > 1$. 
Since it is freely reduced and does not contain $ba^mb^{-1}$ or $b^{-1}a^nb$ as subwords, and since $BS(m,n)$ is a HNN extension $\Z*_{\alpha}$ with $\alpha\colon m\Z \rightarrow n\Z$, we can apply Britton's Lemma: $p$ cannot be the neutral element.

Therefore $Y_{\mS_0}$ contains a configuration with a nontrivial period, and consequently is not strongly aperiodic.
\end{proof}


\subsection{A deeper understanding of the configurations}
\label{subsec:deeper}

We first present additional results on this tileset $Y_{\mS_0}$ of $BS(m,n)$. Most of the ideas present in this section were already present in~\cite{DGG2014} in the context of tilings of the plane.

For a given line $\mathcal{L}_g$, we define the sequence $u_g := (y_{g a^i})_{i\in\Z}$ to be the sequence of digits on the line $\mathcal{L}_g$ (its origin depending on $g$).

Let $f$ be the following bijective continuous map:
\begin{align*}
  f\colon & ^{\textstyle{[\frac{1}{3},2]}}\Big/_{\textstyle{\frac{1}{3} \sim 2}} \rightarrow ^{\textstyle{[\frac{1}{3},2]}}\Big/_{\textstyle{\frac{1}{3} \sim 2}}\\
  f(x) = &
  \begin{cases}
    f_1(x)=2x &\quad\text{if } x \in (\frac{1}{3},1)\\
    f_2(x)=\frac{1}{3}x &\quad\text{if } x \in (1,2)\\
    \overline{2} & \quad\text{if } x = 1\\
    \frac{2}{3} & \quad\text{if } x = \overline{2}
  \end{cases}\\
\end{align*}
$f$ is strongly related to $\mS_0$ because for any $x \in [\frac{1}{3},2]$, for any $k \in \Z$,
\begin{equation}
  \label{eq:Sandf}
^{\textstyle{{\mS_0}^k(x)}}\Big/_{\textstyle{\frac{1}{3} \sim 2}} = f^k(x).
\end{equation}
due to the fact that for our particular $\mS_0, {\mS_0}^k(x) \subseteq [\frac{1}{3},2]$.

An easy consequence is the following:

\begin{lemma}
  \label{lemma-apply-f}
  Let $y \in Y_{\mS_0}$. Let $g \in BS(m,n)$.
  Let $x \in [\frac{1}{3},2]$ be a real represented by $u_g$; then $u_{gb^{-1}}$ represents $f(x)$ (which means either $\frac{1}{3}$ or $2$ if $f(x) = \overline{2}$).
\end{lemma}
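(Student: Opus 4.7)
The plan is to read off the conclusion essentially as a corollary of \cref{th:superthm}, once I identify which color the line between $\mathcal{L}_g$ and $\mathcal{L}_{gb^{-1}}$ must carry. The single line of tiles separating these two levels has, by construction of $Y_{\mS_0}$, a well-defined color $i\in\{1,2\}$ (this is the entire point of the product alphabet that synchronizes the $\tau_i$'s in the proof of \cref{th:superthm}). Item~\ref{itemthrep} and the interval-constraint argument inside the proof of \cref{th:superthm} force the real represented by $u_g$ to lie in $I_i$ (i.e.\ in $[\tfrac13,1]$ if $i=1$, in $[1,2]$ if $i=2$), and item~\ref{itemthS}, combined with the multiplying property, forces $u_{gb^{-1}}$ to represent $q_i x = f_i(x)$.

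From here I would finish by a short case analysis on where $x$ sits inside $[\tfrac13,2]$, to match the four pieces of the definition of $f$. If $x\in(\tfrac13,1)$ then $x\notin I_2$, so the line color must be $1$, and $u_{gb^{-1}}$ represents $f_1(x)=2x=f(x)$. Symmetrically if $x\in(1,2)$ the line color must be $2$ and $u_{gb^{-1}}$ represents $f_2(x)=x/3=f(x)$. The three remaining values $x=\tfrac13$, $x=1$, $x=2$ are exactly where the quotient $\tfrac13\sim 2$ comes in: for $x=1$ either color is allowed, but color~$1$ sends $u_{gb^{-1}}$ to a representation of $2$ and color~$2$ sends it to a representation of $\tfrac13$, and these two values coincide with the single point $\overline{2}=f(1)$ in the quotient; for $x=\tfrac13$ only color~$1$ is possible (since $\tfrac13\notin I_2$) giving $\tfrac23=f(\overline 2)$, and for $x=2$ only color~$2$ is possible giving $\tfrac23=f(\overline 2)$ as well.

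The only real subtlety, which I would treat explicitly, is the fact that a sequence can a priori represent several reals (cf.\ the note after \cref{def:balanced_repr}), so one must use the multiplying identity \cref{eq:k_multiply} along the \emph{same} sequence of intervals $(J_l)$ that witnesses $u_g \to x$: taking limits of $q_i t_l + l^{\tau}/|J_l| = b_l + r^{\tau}/|J_l|$ along that sequence shows that the corresponding average of $u_{gb^{-1}}$ converges to $q_i x$, which is the precise content of ``$u_{gb^{-1}}$ represents $f(x)$''. I expect this coordination of the two sequences of intervals to be the only step that requires care; the case analysis and the translation to the quotient are then immediate from \cref{eq:Sandf}.
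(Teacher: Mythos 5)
Your proposal is correct and follows the same route as the paper, whose proof is literally a two-line citation of items~\ref{itemthrep} and~\ref{itemthS} of \cref{th:superthm} together with \cref{eq:Sandf}; you simply unfold that citation into the line-color identification, the case analysis at the quotient points, and the coordination of the interval sequences. The extra care you take (in particular aligning the averaging intervals on the top and bottom of the line) is implicit in the paper's appeal to the theorem, so there is nothing to fix.
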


\begin{proof}
  $u_g$ represents at least one such $x$ because of \cref{itemthrep} of \cref{th:superthm}. The rest is due to \cref{itemthS} of \cref{th:superthm} and \cref{eq:Sandf}.
\end{proof}

It turns out that with our choice of multiplicative system $\mS_0$, any line can represent only one real number. We need several lemmas to prove this; all inspired by \cite{DGG2014}.

\begin{lemma}
  \label{lemmasysdyn}
  Let $\phi$ be defined as follows:
  \[ \phi \colon ^{\textstyle{[\frac{1}{3},2]}}\Big/_{\textstyle{\frac{1}{3} \sim 2}} \rightarrow ^{\textstyle{[0,1]}}\Big/_{\textstyle{0 \sim 1}} \]
  \[
  \phi(x) = \dfrac{\log(x) + \log(3)}{\log(2) + \log(3)} \mod 1
  \]
  $\phi$ is a correctly defined mapping that conjugates the dynamical systems $(^{\textstyle{[\frac{1}{3},2]}}\Big/_{\textstyle{\frac{1}{3} \sim 2}}, t, f)$ and $(^{\textstyle{[0,1]}}\Big/_{\textstyle{0 \sim 1}}, t^\prime, \phi \circ f \circ \phi^{-1})$, where $t$ and $t^\prime$ are the usual topologies on the considered sets.
\end{lemma}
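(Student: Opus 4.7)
The plan is to verify the three conditions that make $\phi$ a conjugacy in the sense of the paper's definition: that $\phi$ is well-defined on the quotient, that it is a continuous bijection onto the target quotient, and that it intertwines the two $\Z$-actions. The last point is automatic, since by construction the action on the target is $\phi\circ f\circ \phi^{-1}$: for every $k\in\Z$ and every $x$, one has $(\phi\circ f\circ\phi^{-1})^k(\phi(x)) = \phi(f^k(x))$, which is exactly the equivariance required by the paper's definition of conjugacy. Thus the entire content of the lemma reduces to checking that $\phi$ is a well-defined homeomorphism between the two quotient circles.

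First I would handle well-definedness on the quotient. A direct computation gives $\phi(\tfrac13) = (\log(\tfrac13)+\log 3)/(\log 2 + \log 3) = 0$ and $\phi(2) = (\log 2 + \log 3)/(\log 2 + \log 3) = 1$, so the two identified endpoints $\tfrac13$ and $2$ are sent to $0$ and $1$, which are identified in the target. For continuity, $\phi$ is the composition of $\log$ (continuous on $[\tfrac13,2]$) with an affine map, so it is continuous as a map $[\tfrac13,2]\to[0,1]$; together with the endpoint computation it descends to a continuous map of the quotients.

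Next I would check bijectivity. Since $\log$ is strictly increasing on $[\tfrac13,2]$, the lift $x\mapsto (\log x+\log 3)/\log 6$ is a strictly increasing continuous bijection from $[\tfrac13,2]$ onto $[0,1]$; passing to the quotients (which glue $\tfrac13$ with $2$ on the source and $0$ with $1$ on the target) preserves this bijectivity. Finally, since both quotient spaces are compact Hausdorff (they are circles), a continuous bijection between them is automatically a homeomorphism, so $\phi^{-1}$ is continuous as well. Combined with the automatic intertwining above, this shows that $\phi$ is a conjugacy.

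I do not expect any real obstacle: the lemma is essentially the observation that the logarithm straightens out the multiplicative dynamics of $\mS_0$ into a (piecewise) translation on the circle of circumference $\log 6$, rescaled to unit length. The only small care needed is to track the identifications at the endpoints $\tfrac13\sim 2$ and $0\sim 1$, which is why I would carry out the endpoint computation explicitly before invoking the compact Hausdorff argument.
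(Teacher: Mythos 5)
Your proposal is correct and follows essentially the same route as the paper: observe that the intertwining condition is automatic because the target action is defined as $\phi \circ f \circ \phi^{-1}$, and then verify that $\phi$ descends to a well-defined continuous bijection of the quotient circles (the paper checks continuity at the glued point $\overline{2}$ and exhibits the explicit inverse $y \mapsto \frac{6^y}{3}$, where you instead invoke monotonicity of $\log$ and the compact-Hausdorff argument). The extra care you take with the endpoint identifications and the continuity of $\phi^{-1}$ is harmless and, if anything, slightly more detailed than the paper's version.
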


\begin{proof}
  Since the action considered on $^{\textstyle{[0,1]}}\Big/_{\textstyle{0 \sim 1}}$ is $\phi \circ f \circ \phi^{-1}$, one only needs to check that $\phi$ is bijective and continuous to conclude that it yields a conjugation.
  
  $\phi$ is clearly continuous everywhere except on $\overline{2}$; there, one can check that the left and right limits both lead to $\phi(\overline{2}) = 0$ which is correctly defined.
  
  $\phi^{-1}$ is defined by $\Phi(y) = \frac{6^y}{3}$ except with collapsed images for $0$ and $1$.
\end{proof}

\begin{lemma}
  \label{lemmarotation}
  The map $r := \phi \circ f \circ \phi^{-1}$ can be considered a rotation of irrational angle $\frac{\log(2)}{\log(2) + \log(3)}$ when identifying $^{\textstyle{[0,1]}}\Big/_{\textstyle{0 \sim 1}}$ and $\mathbb{S}^1$.
\end{lemma}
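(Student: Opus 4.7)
The plan is to show directly that $\phi \circ f = T_\theta \circ \phi$ on $[0,1]/(0\sim 1)$, where $T_\theta(y) = y + \theta \pmod 1$ and $\theta = \frac{\log 2}{\log 2 + \log 3}$. Since translations on $[0,1]/(0\sim 1)$ correspond, under the standard identification with $\mathbb{S}^1$ via $y \mapsto e^{2\pi i y}$, to rotations of angle $2\pi\theta$, this is exactly the claim. The verification splits naturally along the two branches of $f$, together with a check at the boundary points $1$ and $\overline{2}$, and a short argument for irrationality of $\theta$.

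First I would handle the two open branches. For $x \in (\frac{1}{3}, 1)$, $f(x) = 2x$, so a direct substitution gives
\[
\phi(f(x)) = \frac{\log(2x) + \log 3}{\log 2 + \log 3} = \phi(x) + \frac{\log 2}{\log 2 + \log 3}.
\]
For $x \in (1,2)$, $f(x) = x/3$, so
\[
\phi(f(x)) = \frac{\log(x/3) + \log 3}{\log 2 + \log 3} = \phi(x) - \frac{\log 3}{\log 2 + \log 3} \equiv \phi(x) + \frac{\log 2}{\log 2 + \log 3} \pmod{1},
\]
using $-\log 3 = \log 2 - (\log 2 + \log 3)$. In both cases the shift is exactly $\theta$.

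Next I would dispatch the two special points. At $x = 1$, $f(1) = \overline{2}$ with $\phi(\overline{2}) = 0 \equiv 1$, while $\phi(1) = \log 3 / \log 6$; one checks $\phi(f(1)) - \phi(1) \equiv \theta \pmod 1$. At $x = \overline{2}$, $f(\overline{2}) = 2/3$, so $\phi(f(\overline{2})) = \log 2 / \log 6 = \theta$ and $\phi(\overline{2}) = 0$, giving again the shift $\theta$. Alternatively, since $\phi$ is continuous on the quotient (\cref{lemmasysdyn}) and the two branches glue continuously, these boundary identities follow from continuity of $T_\theta \circ \phi$ and the branch computations above.

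Finally, to see that $\theta$ is irrational, I note that $\theta = \log_6 2$. If $\theta = p/q$ with $p,q \in \mathbb{N}$, $q \geq 1$, then $2^q = 6^p = 2^p 3^p$, hence $2^{q-p} = 3^p$, which by unique factorization forces $p = 0$ and then $q = 0$, a contradiction. The only genuinely delicate point is making sure that the piecewise definition of $f$ at the two singular points does not spoil the rotation identity, but since $\phi$ has been set up precisely so that $\overline{2}$ maps to $0 \sim 1$, the singularities line up automatically with the branch cut of the modular arithmetic and nothing goes wrong.
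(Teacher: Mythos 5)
Your proof is correct and follows essentially the same route as the paper: a direct branch-by-branch computation showing $\phi\circ f\circ\phi^{-1}$ shifts by $\frac{\log 2}{\log 2+\log 3}$ modulo $1$, with the special points checked separately. You additionally supply the explicit check at $x=1$ and a proof that the angle $\log_6 2$ is irrational (via unique factorization), both of which the paper's proof omits or merely asserts; these are welcome but do not change the argument.
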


\begin{proof}
  For every $\alpha \in \phi((\frac{1}{3},1))$,
  \begin{align*}
    \phi \circ f \circ \phi^{-1} (\alpha) & = \phi(2 \phi^{-1} (\alpha))\\
    & = \dfrac{\log(2) + \log(\phi^{-1} (\alpha)) + \log(3)}{\log(2) + \log(3)} \text{ mod } 1\\
    & = \alpha + \dfrac{\log(2)}{\log(2) + \log(3)} \text{ mod } 1.
  \end{align*}
  Similarly, for every $\alpha \in \phi((1,2))$, one has
  \begin{align*}
    \phi \circ f \circ \phi^{-1} (\alpha) & = \phi(\dfrac{1}{3}\phi^{-1} (\alpha))\\
    & = \dfrac{\log(\phi^{-1} (\alpha))}{\log(2) + \log(3)} \text{ mod } 1\\
    & = \alpha + \dfrac{\log(2)}{\log(2) + \log(3)} \text{ mod } 1.
  \end{align*}
  Finally, $\phi \circ f \circ \phi^{-1} (0) = \phi(f(\overline{2})) = \phi(\frac{2}{3}) = \frac{\log(2)}{\log(2) + \log(3)}$.
\end{proof}

We now have the tools to prove the following key lemma:

\begin{lemma}{(Uniqueness of representation)}\label{lemmarepresent}
  For any $y \in Y_{\mS_0}$, for any $g \in BS(m,n)$, the sequence $u_g = (y_{ga^i})_{i\in\Z}$ represents a unique real number.
\end{lemma}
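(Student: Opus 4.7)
The plan is to argue by contradiction. Assume $u_g$ represents two distinct reals $x \neq x'$ in $[\tfrac{1}{3}, 2]$. The only pair of distinct reals in $[\tfrac{1}{3}, 2]$ having the same image under $\phi$ is $\{\tfrac{1}{3}, 2\}$, and a sequence with labels in $\{0,1\}$ cannot represent $2$ while one with labels in $\{1,2\}$ cannot represent $\tfrac{1}{3}$; so I may assume $\phi(x) \neq \phi(x')$. Iterating \cref{lemma-apply-f} then gives that $u_{gb^{-k}}$ represents both $f^k(x)$ and $f^k(x')$ for every $k \geq 0$, since both orbits remain in $[\tfrac{1}{3}, 2]$.

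Next, I will transport the problem to the circle via the conjugacy of \cref{lemmasysdyn}. Setting $\alpha = \phi(x)$, $\alpha' = \phi(x')$, and $\gamma = \alpha' - \alpha \bmod 1 \neq 0$, \cref{lemmarotation} gives $\phi(f^k(x)) = r^k(\alpha)$ and $\phi(f^k(x')) = r^k(\alpha) + \gamma$, where $r$ is rotation by an irrational angle; in particular the orbit $\{r^k(\alpha) : k \geq 0\}$ is dense in $\mathbb{S}^1$.

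The key step is to exhibit a level at which $f^k(x)$ and $f^k(x')$ lie strictly on opposite sides of $1$. Writing $A = \phi([\tfrac{1}{3}, 1))$ and $B = \phi((1, 2])$ for the two complementary open arcs, this amounts to finding $k$ with $r^k(\alpha) \in A \triangle (A - \gamma)$. Since $A$ is an arc and $\gamma \neq 0$, $A - \gamma$ is a genuinely distinct arc of the same length, so the symmetric difference is a non-empty open subset of $\mathbb{S}^1$; density of the orbit then provides the required $k$.

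For this $k$, the line $u_{gb^{-k}}$ carries a single color by the product-alphabet structure of the tileset: either its top labels lie in $\{0, 1\}$, forcing every represented real into $[0, 1]$, or they lie in $\{1, 2\}$, forcing every represented real into $[1, 2]$. Neither case is compatible with simultaneously representing one value strictly below $1$ and another strictly above, which is the desired contradiction. I expect the rotation-density step to be the only real obstacle; once it has produced a level at which the two shadow orbits straddle the critical value $1$, the coloring constraints of the tileset close the argument immediately.
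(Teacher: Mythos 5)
Your argument is correct and follows essentially the same route as the paper's: transport the two candidate reals to the circle via $\phi$, use the irrational rotation and the density of its orbit to reach a level where the two represented values fall on opposite sides of $1$, and contradict the one-color-per-line structure of $Y_{\mS_0}$. Your only departure is packaging the density step as ``find $k$ with $r^k(\alpha) \in A \,\triangle\, (A-\gamma)$, a set with non-empty interior since $\gamma \neq 0$'' in place of the paper's bookkeeping with the oriented arc distance $d_{arc}$; this is a slightly cleaner execution of the same step (and it shares with the paper's own proof the harmless glossing of the boundary case where the second value equals exactly $1$, which can occur for at most one $k$ while infinitely many $k$ work).
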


\begin{proof}
  Assume that $u_g$ represents two distinct reals $x$ and $z \in [\frac{1}{3},2]$. They cannot be $\frac{1}{3}$ and $2$ because $u_g$ uses only digits in $\{0,1\}$ or in $\{1,2\}$. Therefore they also are distinct reals in $^{\textstyle{[\frac{1}{3},2]}}\Big/_{\textstyle{\frac{1}{3} \sim 2}}$.
  
  For any $k \in \Z$, notice that $f^k(x) = \phi^{-1} \circ r^k \circ \phi(x)$ and same for $z$, from \cref{lemmasysdyn}.
  We will study the behavior of $\phi(x)$ and $\phi(z)$ under iterations of $r$.
  
  The angle $\frac{\log(2)}{\log(2) + \log(3)}$, of which $r$ is a rotation by \cref{lemmarotation}, is irrational. As a consequence, the sets $\{r^k \circ \phi(x) \mid k \in \N\}$ and $\{r^k \circ \phi(z) \mid k \in \N\}$ are both dense in $\mathbb{S}^1$.

  We introduce $d_{arc}(e^{2i\pi\theta}, e^{2i\pi\psi}) = m(\psi-\theta) \in [0,1)$ for $\theta,\psi \in \R$, where $m(\psi-\theta)$ is the only real in $[0,1)$ congruent to $\psi-\theta$ mod $1$.
  We call $d_{arc}$ the \emph{oriented arc distance} (measured counterclockwise) between two elements of $\mathbb{S}^1$. It is not a distance \textit{per se} since it is not symmetric and has no triangular inequality, but its basic properties will suffice here.
  Since $r$ is a rotation, it is easy to check that it preserves $d_{arc}$. Hence we have that $\forall k \in \N, d_{arc}(r^k \circ \phi(x), r^k \circ \phi(z))$ is constant equal to some $c\in[0,1[$. Up to considering $d_{arc}(r^k \circ \phi(z), r^k \circ \phi(x))$ instead, and doing the following reasoning by swapping $x$ and $z$, we can assume that $c \leq \frac{1}{2}$.

  Let us split $\mathbb{S}^1$ between $A = \phi((\frac{1}{3},1))$, $B = \phi([1,2))$, and $\{\phi(2)\}=\{\phi(\frac{1}{3})\}=\{0\}$.
  We want to show that there is some $l \in \N$ for which $r^l \circ \phi(x) \in B$ and $r^l \circ \phi(z) \in A$.

  By density of $\{r^k \circ \phi(x) \mid k \in \Z\}$, there exists some $k_0 \in \N$ such that $d_{arc}(r^{k_0} \circ \phi(x), 0) < c$ and $r^{k_0} \circ \phi(x) \in B$. We cannot have $r^{k_0} \circ \phi(z) = 0$ without contradicting the previous inequality, hence it is either in $A$ or in $B$. But if it was in $B$, then the arc from $r^{k_0} \circ \phi(x)$ to $r^{k_0} \circ \phi(z)$ would contain all of $A$. This is not possible because $|A|_{d_{arc}} > \frac{1}{2} \geq c$.

  Hence there exists $l = k_0 \in \N$ such that $r^l \circ \phi(x) \in B$ and $r^l \circ \phi(z) \in A$.

  \begin{figure}[ht]
    \centering
    \includegraphics[scale=1]{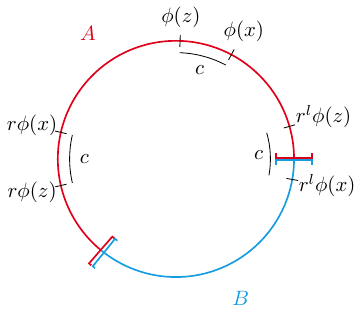}
    \caption{Preservation of the oriented arc distance $d_{arc}$ by $r$ and intersection of the arc $\left(r^l \circ \phi(x),r^l \circ \phi(z)\right)$ and the boundary between $A$ and $B$.}
    \label{fig:circle1}
  \end{figure}

  Since $r^l = \phi \circ f^l \circ \phi^{-1}$, and considering the definitions of $A$ and $B$, $f^l(z) \in (\frac{1}{3},1)$ and $f^l(x) \in [1,2)$. This would cause $f^l(z)$ to be represented by a sequence of $0$'s and $1$'s (with an infinite number of $0$'s) and $f^l(x)$ by a sequence of $1$'s and $2$'s.
  However, the SFT $Y_{\mS_0}$ is built such that a line contains only elements in $\{0,1\}$ or $\{1,2\}$, but not both (see proof of \cref{th:superthm}): this is a contradiction.

  Therefore, $x$ and $z$ must be equal, hence the uniqueness of the real number represented by a given sequence.
\end{proof}

Using previous results, we are now able to prove for $BS(m,n)$ that the real represented by the sequence $u_g$ only depends on $||g||_b$, its ``depth'' in the Cayley graph.

\begin{lemma}\label{lemmahauteur}
  Let $y \in Y_{\mS_0}$, and $e$ the identity of $BS(m,n)$.
  Let $x$ be the unique real represented by the sequence $u_e$.
  Then for every $g \in BS(m,n)$, $u_g$ represents $f^{-||g||_b}(x)$ (with a choice between $\frac{1}{3}$ and $2$, possibly different for different $g$'s, if the resulting value is $\overline{2}$).
\end{lemma}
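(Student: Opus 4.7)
The plan is a straightforward induction on the length of a word in $\{a, a^{-1}, b, b^{-1}\}^*$ representing $g$. The base case $g = e$ is immediate: $||e||_b = 0$, $f^0$ is the identity, and $u_e$ represents $x$ by definition. For the inductive step, I would assume the statement holds for some $g \in BS(m,n)$ and analyze $gs$ according to which of the four generators $s$ has been appended.

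When $s \in \{a, a^{-1}\}$, the element $gs$ lies on the same level $\mathcal{L}_g$, so $u_{gs}$ is merely a shift of $u_g$ and therefore represents the same real (as noted right after \cref{def:balanced_repr}); moreover $||gs||_b = ||g||_b$, so the formula is preserved. When $s = b^{-1}$, the height decreases by one and \cref{lemma-apply-f} applied to $g$ gives directly that $u_{gb^{-1}}$ represents $f$ applied to the real represented by $u_g$, that is, $f(f^{-||g||_b}(x)) = f^{-||gs||_b}(x)$. When $s = b$, the height increases by one; setting $h = gb$, one has $hb^{-1} = g$, so \cref{lemma-apply-f} applied to $h$ yields that $u_g$ represents $f$ applied to the real represented by $u_h$. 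Inverting $f$ (a bijection on the quotient $[\frac{1}{3},2]/(\frac{1}{3} \sim 2)$, by \cref{lemmasysdyn}) then shows that $u_h$ represents $f^{-1}(f^{-||g||_b}(x)) = f^{-||h||_b}(x)$, up to the usual ambiguity at $\overline{2}$.

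The crucial supporting fact is that $u_g$ depends only on $g$ and not on the specific word chosen to represent it; together with the uniqueness of representation from \cref{lemmarepresent}, this guarantees that the induction cannot produce conflicting values for a single group element, even though a given $g$ admits many words. I do not anticipate a serious obstacle: the whole argument is a bookkeeping exercise that threads \cref{lemma-apply-f} along the tree-like structure of $BS(m,n)$ via the $b$-generators while exploiting shift-invariance along each level. The only mild subtlety is the $\overline{2}$ ambiguity, but this is already acknowledged in the statement and causes no trouble as long as $f^{-||g||_b}(x)$ is interpreted as an appropriate representative in $[\frac{1}{3}, 2]$.
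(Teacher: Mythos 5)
Your proof is correct and follows essentially the same route as the paper's: induction on the length of a word representing $g$, with the three cases ($a^{\pm 1}$ as a shift along the level, $b^{-1}$ via \cref{lemma-apply-f}, and $b$ by applying \cref{lemma-apply-f} to $h=gb$ and inverting $f$), relying on \cref{lemmarepresent} to rule out conflicts between different words for the same element. No gaps.
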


\begin{proof}
We prove the result by reasoning on words $w \in \{a,b,a^{-1},b^{-1}\}^*$, by induction on their length. Note that we have no need of proving that different $w$'s representing the same $g$ yield the same result, since this is guaranteed by \cref{lemmarepresent}.

The result is true for $g = e$.

Suppose the result is true for words of length $n \in \N_0$. Let $w$ be a word of length $n$. Then:
\begin{itemize}
  \item $u_{wa}$ and $u_{wa^{-1}}$ represent the same real as $u_{w}$ since they are the same sequence up to an index shift;
  \item $u_{wb^{-1}}$ represents $f(f^{-||w||_b}(x))$ due to \cref{lemma-apply-f} and the induction hypothesis, which is $f^{-||wb^{-1}||_b}(x)$;
  \item suppose $u_{wb}$ represents $y$; then $u_w$ represents $f(y)$ due to \cref{lemma-apply-f}. Then we have, by induction, $y = f^{-||w||_b-1}(x) = f^{-||wb||_b}(x)$.
\end{itemize}
\end{proof}

\begin{remark}
  The previous proof heavily relies on the fact that $f$ is a bijection on $^{\textstyle{[\frac{1}{3},2]}}\Big/_{\textstyle{\frac{1}{3} \sim 2}}$, and that we do not have to differentiate between $\frac{1}{3}$ and $2$ there.
\end{remark}

\subsection{A strongly aperiodic SFT on \texorpdfstring{$BS(1,n)$}{BS(1,n)}}
\label{subsec:strong1n}

If $m$ or $n$ is equal to 1, then the previous weak period of \cref{prop:AKperiod} does not work anymore -- it is a trivial element.
In fact, we prove in this section that for $BS(1,n)$, $Y_{\mS_0}$ is strongly aperiodic.

One key property of $BS(1,n)$ is that there is a simple quasi-normal form for all its elements.

\begin{lemma}{(Quasi-normal form in $BS(1,n)$)}
\label{lemmaBS1}
For every $g \in BS(1,n)$, there are integers $k,m \in \mathbb{N}_0$ and $l \in \Z$ such that ${g = b^{-k} a^{l} b^{m}}$.
\end{lemma}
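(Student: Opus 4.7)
My plan is to prove the lemma by induction on the length of a word $w \in \{a,a^{-1},b,b^{-1}\}^*$ representing $g$.

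The preliminary step is to derive two convenient ``transport'' identities from the defining relation $bab^{-1} = a^n$ of $BS(1,n)$. Rewriting it as $ba = a^n b$ and iterating gives
\[ b^m a^l = a^{l n^m} b^m \quad \text{for all } m \in \N_0,\ l \in \Z. \]
Left-multiplying the defining relation by $b^{-1}$ yields $ab^{-1} = b^{-1} a^n$, and iterating (using the inverse $a^{-1}b^{-1} = b^{-1}a^{-n}$ to cover negative powers) gives
\[ a^l b^{-1} = b^{-1} a^{l n} \quad \text{for all } l \in \Z. \]
It is crucial here that the parameter on the $a$-side is $1$: an arbitrary power $a^l$ can always be pushed past a $b^{-1}$, since no divisibility-by-$m$ condition arises.

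For the induction, the empty word gives the identity $e = b^0 a^0 b^0$, and for the step I would check that right-multiplication by any generator preserves the desired form. Assume $g = b^{-k} a^l b^m$ with $k,m \in \N_0$, $l \in \Z$. Then:
\begin{itemize}
  \item $g \cdot a^{\pm 1} = b^{-k} a^l (b^m a^{\pm 1}) = b^{-k} a^l (a^{\pm n^m} b^m) = b^{-k} a^{l \pm n^m} b^m$, by the first transport identity;
  \item $g \cdot b = b^{-k} a^l b^{m+1}$;
  \item if $m \geq 1$, then $g \cdot b^{-1} = b^{-k} a^l b^{m-1}$;
  \item if $m = 0$, then $g \cdot b^{-1} = b^{-k} a^l b^{-1} = b^{-k}(b^{-1} a^{ln}) = b^{-(k+1)} a^{ln} b^0$, by the second transport identity.
\end{itemize}

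The only step that changes the ``outer structure'' of the expression is the last one, namely $g\cdot b^{-1}$ in the case $m = 0$, which is the expected main obstacle; however, the second transport identity dispatches it immediately and, as noted above, depends essentially on being in $BS(1,n)$ rather than in a general $BS(m,n)$. All other cases modify only the exponents $k$, $l$, or $m$ directly, so the induction closes and the claimed quasi-normal form is achieved for every $g \in BS(1,n)$.
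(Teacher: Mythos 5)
Your proof is correct and uses essentially the same ingredients as the paper's: the four rewriting identities $ba^{\pm1}=a^{\pm n}b$ and $a^{\pm1}b^{-1}=b^{-1}a^{\pm n}$, which push positive powers of $b$ to the right and negative powers to the left. Your formulation as an induction on word length is just a slightly more formal packaging of the paper's rewriting argument, so there is nothing to add.
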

\begin{proof}
From the definition of $BS(1,n)$, we have that $b a = a^n b$~(1), $b a^{-1} = a^{-n} b$~(2), $a b^{-1} = b^{-1} a^n$~(3) and $a^{-1} b^{-1} = b^{-1} a^{-n}$~(4). Consequently, taking an element of $BS(1,n)$ as a word $w$ written with $a$ and $b$, we can:
\begin{itemize}
\item Move each positive power of $b$ to the right of the word using (1) and (2) repeatedly;
\item Move each negative power of $b$ to the left of the word using (3) and (4) repeatedly;
\end{itemize}
so that we finally get a form for the word $w$ which is: $b^{-k} a^{l} b^{m}$ with $k,m \in \mathbb{N}_0$ and $l \in \Z$.
\end{proof}

\begin{remark}
\label{remarkheight}
A general normal form -- the same, with $k$ imposed to be minimal -- can be obtained from Britton's Lemma. The form obtained here is \emph{not} unique ($a = b^{-1} a^n b$ for instance), but we use it because it admits a simple self-contained proof, and it is enough for what follows: the sum $m-k$ is constant for all writings of a given group element, hence we name it ``quasi-normal''.
\end{remark}
\begin{proof}
Indeed, suppose we have $b^{-k}a^{l}b^{m} = b^{-k^\prime}a^{l^\prime}b^{m^\prime}$. Then
\begin{align*}
b^{-k}a^{l} & = b^{-k^\prime}a^{l^\prime}b^{-(m - m^\prime)}\\
& = b^{-k^\prime-(m - m^\prime)}a^{l^\prime n^{m - m^\prime}}
\end{align*}
Hence we get $a^{l^\prime n^{m - m^\prime} -l} = b^{-k + k^\prime +m - m^\prime}$.
Since it is clear that $a^i = b^j$ if and only if $i=j=0$ in $BS(1,n)$, we obtain $m- k - (m^\prime - k^\prime) = 0$ which is what we wanted.
\end{proof}

This quasi-normal form is the only thing that missed to prove the following.

\begin{theorem}
  \label{th:strong}
  For every $n \geq 2$, the Baumslag-Solitar group $BS(1,n)$ admits a strongly aperiodic SFT.
\end{theorem}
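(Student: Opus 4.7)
The plan is to argue by contradiction: assume $y\in Y_{\mS_0}$ has a non-trivial period $g\in BS(1,n)\setminus\{e\}$. Using the quasi-normal form of \cref{lemmaBS1}, write $g=b^{-k}a^{l}b^{m}$, and let $x_0$ be the real uniquely represented by $u_e$ (which exists by \cref{lemmarepresent}). I handle the two cases $||g||_b\neq 0$ and $||g||_b=0$ separately.

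\textbf{Case $||g||_b\neq 0$.} For every $i\in\Z$, the period relation $g\cdot y=y$ gives $y_{a^i}=y_{g^{-1}a^i}$. Since $g^{-1}a^i$ is precisely the element at index $i$ in $\mathcal{L}_{g^{-1}}$, the biinfinite sequences $u_e$ and $u_{g^{-1}}$ coincide letter-by-letter. By \cref{lemmarepresent} each represents a unique real, so they represent the same one; by \cref{lemmahauteur} those reals are $x_0$ and $f^{||g||_b}(x_0)$ respectively. Hence $x_0$ is a periodic point of $f$, contradicting \cref{lemmasysdyn,lemmarotation}, which together show $f$ is conjugate to rotation by an irrational angle.

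\textbf{Case $||g||_b=0$.} Then $g=b^{-k}a^{l}b^{k}$ with $l\neq 0$. Conjugation gives $b^k g b^{-k}=a^l$, so $b^k\cdot y\in Y_{\mS_0}$ is $a^l$-periodic and I may assume the period is $g=a^l$ itself. Iterating $ba=a^n b$ yields $b^h a b^{-h}=a^{n^h}$ in $BS(1,n)$, so $a^{-jl}b^h=b^h a^{-jl/n^h}$ exactly when $n^h\mid jl$; the smallest such positive $j$ is $j_0=n^h/\gcd(l,n^h)$. Combined with $g^{j_0}\cdot y=y$, this forces $u_{b^h}$ to be periodic with period $l/\gcd(l,n^h)$. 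Letting $l_n$ denote the largest divisor of $l$ whose prime factors all divide $n$, the gcd stabilizes at $l_n$ for $h$ sufficiently large, so the period of $u_{b^h}$ stabilizes at $l':=l/l_n\in\N$.

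To conclude, for all large $h$ the sequence $u_{b^h}$ is $l'$-periodic with digits in $\{0,1\}$ or $\{1,2\}$, hence represents a rational in $\{p/l':p\in\Z\}\cap[1/3,2]$, a finite set. By \cref{lemmahauteur} this rational equals $f^{-h}(x_0)$; pigeonhole across $h$ yields $h_1<h_2$ with $f^{-h_1}(x_0)=f^{-h_2}(x_0)$, whence $x_0=f^{h_2-h_1}(x_0)$ --- again a periodic point of $f$, contradicting \cref{lemmarotation}. The main obstacle is the third paragraph: extracting from the single generator $a^l$ a period of $u_{b^h}$ that is bounded uniformly in $h$. This uniform boundedness crucially relies on the commutation $b^h a b^{-h}=a^{n^h}$, which is only available when $m=1$; when $m>1$ the argument collapses, and as \cref{prop:AKperiod} shows, genuine weak periods actually appear.
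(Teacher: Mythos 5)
Your proof is correct and follows essentially the same route as the paper's: first use \cref{lemmarepresent} and \cref{lemmahauteur} together with the aperiodicity of $f$ to force any stabilizer element into the form $b^{-k}a^{l}b^{k}$, then propagate the $a^{l}$-periodicity to all levels $\mathcal{L}_{b^{h}}$ and pigeonhole on the finitely many rationals representable by boundedly periodic sequences. The only (immaterial) difference is mechanical: the paper obtains $b^{j}a^{l}b^{-j}\in Stab(y)$ by computing $g^{n^{i}}$, whereas you conjugate and push $a^{j_0 l}$ through $b^{h}$ via $b^{h}ab^{-h}=a^{n^{h}}$, which is the same relation and yields the same conclusion.
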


\begin{proof}
  Let $y\in Y_{\mS_0}$, and $g \in Stab_{BS(1,n)}(y)$.
  Using \cref{lemmaBS1}, we can write $g = b^{-k} a^l b^{m}$ with $k, m \in \N_0, l \in \Z$.

  Let $x$ be the real represented by $u_e$.
  By \cref{lemmahauteur}, $u_g$ represents $f^{k - m}(x)$. Since $g \in Stab_{BS(1,n)}(y)$, $u_g = u_e$ and so $f^{k - m}(x) = x$ by the uniqueness of the representation from \cref{lemmarepresent}. The aperiodicity of $f$ then implies that $k = m$.

  Let us assume $l \neq 0$.
  Then $g = b^{-k} a^{l} b^{k}$ and $g^n = b^{-k} (a^n)^l b^{k}$.
  We can reduce $g^n$ to $b^{-k+1} a^l b^{k-1}$ using the relation $a^n= b a b^{-1}$.
  More generally, we notice that for any positive integer $i$, iterating the process $i$ times, we obtain that $g^{n^i} = b^{-k+i} a^l b^{k-i} \in Stab_{BS(1,n)}(y)$.

  Since for all $i$, $g^{n^i}\in Stab_{BS(1,n)}(y)$, we can obtain a contradiction with an argument similar to Prop~6. of \cite{BS}.
  We have $b^{j} a^l b^{-j} \in Stab_{BS(1,n)}(y)$ for any $j \geq -k$. This means that $u_{b^j} = u_{b^{j} a^l}$ hence $u_{b^j}$ is a $l$-periodic sequence. We have a finite number of said sequences, since they can only use digits among $\{0,1,2\}$.
  Consequently, there are $j_1\neq j_2$ such that the two levels $\mathcal{L}_{b^{j_1}}$ and $\mathcal{L}_{b^{j_2}}$ read the same sequence (up to index translation).
  These two levels represent respectively $f^{j_1}(x)$ and $f^{j_2}(x)$ due to \cref{lemmahauteur}, and since the two sequences on these levels are the same, $f^{j_1}(x) = f^{j_2}(x)$.
  This equality contradicts the fact that $f$ has no periodic point, since we had $j_1\neq j_2$.

  As a consequence, any non-trivial $g\in {BS(1,n)}$ cannot be in $Stab_{BS(1,n)}(x)$, and we finally get that $Stab_{BS(1,n)}(x) = \{e\}$: $Y_{\mS_0}$ is strongly aperiodic.
\end{proof}

Following \cref{th:strong}, a question remains: is the strong aperiodicity of Aubrun and Kari's SFT a property of the group $BS(1,n)$ itself, or does it only arise on carefully chosen SFTs, as $Y_{\mS_0}$? Is this because $BS(1,n)$ behaves like $\Z^2$ and all its weakly aperiodic SFTs are also strongly aperiodic, or does Aubrun and Kari's construction happen to be ``too much aperiodic''?
It turns out that the latter is the correct answer, as we build in the following section an SFT on $BS(1,n)$ that is weakly but not strongly aperiodic.

\section{A weakly but not strongly aperiodic SFT on \texorpdfstring{$BS(1,n)$}{BS(1,n)}}
\label{sec:weak1n}

Our weakly but not strongly aperiodic SFT will work by encoding specific substitutions into $BS(1,n)$.
Indeed, the Cayley graph of $BS(1,n)$ is very similar to orbit graphs of uniform substitutions (see for example \cite{CGS, ABM} for a definition of orbit graphs and another example of a Cayley graph similar to an orbit graph).
In this section, we find a set of substitutions that are easy to encode in $BS(1,n)$ (\cref{sec:sigma_i}), and show how to do it (\cref{sec:encoding}).

\subsection{The substitutions \texorpdfstring{$\sigma_i$}{sigma\_i}}
\label{sec:sigma_i}
Let $\A=\{0,1\}$. For $r \in \{0,\dots,n-1\}$, let $\sigma_r: \A \rightarrow \A^n$ be the following substitution:
\[ \sigma_r:
\begin{cases}
0\mapsto 0^{n-r-1}10^r \\
1\mapsto 0^n
\end{cases} .\]
We may also write $\sigma = \sigma_0$ and call the other ones the \emph{shifts} of $\sigma$.

\noindent
Note that, for $c \in \{0,1\}$ and $i \in \{0,\dots,n-1\}$, $\sigma_r(c)_i=0$ if and only if $c=0$ and $i=n-r-1$ (starting to count from $0$ the indices of the word $\sigma_r(c)$).

\noindent
All $\sigma_r(0)$ are cyclic permutations of the same finite word. Denote $\rho$ the \emph{shift action} on a biinfinite word $u$, i.e. ${\rho^j(u)}_i = u_{i+j}$, as a way to write the action of $\Z$ on $\{0,1\}^\Z$.

\begin{lemma}
  \label{lemma:exponentindex}
    For any biinfinite word $u\in\A^\Z$, any $i,r \in \{0,\dots,n-1\}$ and $j\in\Z$,
  \[(\sigma_r \circ \rho^j(u))_i = \sigma_r(u_j)_i = (\sigma_r(u))_{nj+i}.\]
\end{lemma}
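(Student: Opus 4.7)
The plan is to unpack the definition of how the uniform substitution $\sigma_r$ of size $n$ acts on a pointed biinfinite word, as illustrated in \cref{fig:biinf}. The key structural fact is that for any such pointed word $v\in\A^\Z$ and any integer $k$, writing $k=nj+i$ with $j\in\Z$ and $i\in\{0,\dots,n-1\}$ uniquely determined, one has $(\sigma_r(v))_{nj+i}=\sigma_r(v_j)_i$: the letter $v_j$ produces exactly the block of $n$ letters sitting at positions $nj,nj+1,\dots,nj+n-1$ in $\sigma_r(v)$, and $i$ selects which letter of that block we look at.

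First I would prove the second equality $\sigma_r(u_j)_i = (\sigma_r(u))_{nj+i}$ by invoking this structural fact directly with $v=u$ and $k=nj+i$. This is a one-line consequence of the definition: by convention the letter $u_0$ is sent to the block at positions $0,\dots,n-1$ of $\sigma_r(u)$, so by shifting, $u_j$ is sent to the block at positions $nj,\dots,nj+n-1$, and reading position $nj+i$ returns precisely the $i$-th letter of $\sigma_r(u_j)$.

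Next I would prove the first equality $(\sigma_r\circ\rho^j(u))_i = \sigma_r(u_j)_i$ by applying the same structural fact to $v=\rho^j(u)$ at position $k=i=n\cdot 0 + i$. This yields
\[
(\sigma_r(\rho^j(u)))_i = \sigma_r((\rho^j(u))_0)_i.
\]
Since $(\rho^j(u))_0 = u_{0+j} = u_j$ by definition of the shift action, the right-hand side is $\sigma_r(u_j)_i$, as wanted.

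Both equalities being unpacked, the lemma follows by chaining them. There is no real obstacle here beyond index bookkeeping: the only subtlety is to keep track of where the pointing (the origin) sits after a shift $\rho^j$, and to respect the hypothesis $i\in\{0,\dots,n-1\}$ which ensures that $nj+i$ has the unique decomposition needed to apply the structural fact.
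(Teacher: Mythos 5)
Your proposal is correct and follows essentially the same route as the paper: both arguments reduce to the observation that, for a uniform substitution of size $n$ on a pointed biinfinite word, the letter at position $nj+i$ of the image depends only on the letter $u_j$ (it is the $i$-th letter of the image block), combined with $(\rho^j(u))_0=u_j$. The only difference is presentational — you isolate this as an explicit structural fact and apply it twice, whereas the paper states the two instances directly.
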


\begin{proof}
  For $i \in \{0,\dots,n-1\}$, $\sigma_{r}(\rho^j(u))_i$ depends on the letter of $\rho^j(u)$ at position $0$ only, that is $u_j$ (See \cref{fig:exponentindex}), hence
    $ \sigma_{r}(\rho^j(u))_i = \sigma_{r}(\rho^j(u)_0)_i = \sigma_{r}(u_j)_i .$

    Similarly, the letter $(\sigma_r(u))_{nj+i}$ does not depend on the totality of $u$ but only on $u_j$: it is the $i$th letter of $\sigma_r(u_j)$.
\end{proof}

\begin{figure}[ht]
  \centering
  \includegraphics[scale=0.65]{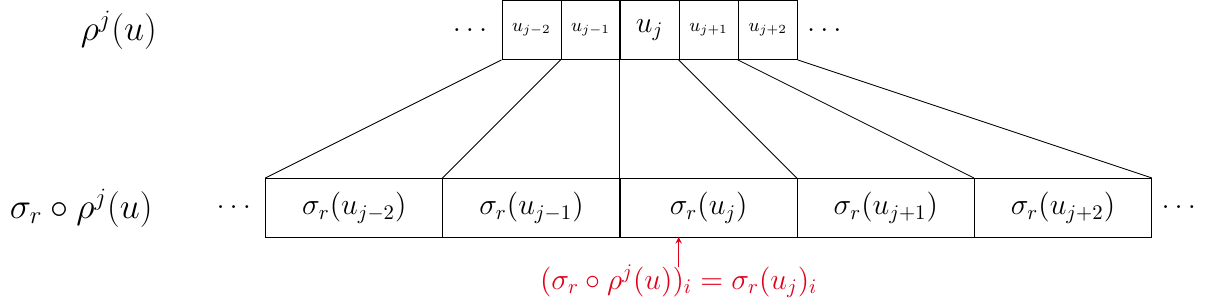}
  \caption{Illustration of \cref{lemma:exponentindex}.}
  \label{fig:exponentindex}
\end{figure}

\begin{lemma}
  \label{lemma:remainder}
  For any $r \in \{0,\dots,n-1\}$,
  \[
  \sigma_r = \rho^{r} \circ \sigma.
  \]
\end{lemma}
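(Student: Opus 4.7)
The plan is to verify the identity pointwise: for each biinfinite word $u \in \A^\Z$ and each position $i \in \Z$, show that $\sigma_r(u)_i = (\rho^r \circ \sigma)(u)_i = \sigma(u)_{i+r}$. Everything reduces to careful index bookkeeping using \cref{lemma:exponentindex}.

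First, write $i$ uniquely as $i = nj + k$ with $j \in \Z$ and $k \in \{0, \dots, n-1\}$. By \cref{lemma:exponentindex}, the left-hand side becomes $\sigma_r(u)_i = \sigma_r(u_j)_k$. For the right-hand side, since $0 \leq k + r \leq 2n-2$, there is a unique decomposition $k + r = n\delta + k'$ with $\delta \in \{0,1\}$ and $k' \in \{0, \dots, n-1\}$; thus $i + r = n(j + \delta) + k'$, and \cref{lemma:exponentindex} again gives $\sigma(u)_{i+r} = \sigma(u_{j+\delta})_{k'}$.

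Second, perform a case analysis on $u_j$. If $u_j = 1$, then $\sigma_r(1) = 0^n$ makes the LHS equal to $0$; for the RHS, when $\delta = 0$ we have $\sigma(u_j)_{k'} = \sigma(1)_{k'} = 0$, and when $\delta = 1$ one checks $k' = k + r - n \leq n - 2$, so $\sigma(u_{j+1})_{k'} = 0$ whatever $u_{j+1}$ is (the only $1$ in $\sigma(\A)$ sits at position $n-1$). If instead $u_j = 0$, then $\sigma_r(0) = 0^{n-r-1}\,1\,0^r$, so the LHS equals $1$ iff $k = n - r - 1$. When $\delta = 0$ one has $k' = k + r$, and $\sigma(0)_{k+r} = 1$ iff $k + r = n - 1$, which is exactly $k = n - r - 1$; when $\delta = 1$ one has $k + r \geq n$, hence $k > n - r - 1$, so the LHS is $0$, while the bound $k' \leq n - 2$ forces $\sigma(u_{j+1})_{k'} = 0$ as before. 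In every case LHS $=$ RHS, proving $\sigma_r = \rho^r \circ \sigma$.

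There is no real obstacle here beyond notational discipline: one must keep track of how the shift $\rho^r$ spills across $\sigma$-block boundaries. The unifying observation that makes the verification painless is that $\sigma$ puts a $1$ only at the rightmost position of each block, so any shift by $r < n$ that leaks into the next block automatically reads $0$.
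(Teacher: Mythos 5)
Your proof is correct and follows essentially the same route as the paper's: a pointwise verification via \cref{lemma:exponentindex}, tracking how the shift by $r$ moves indices across block boundaries and using the fact that the unique $1$ in $\sigma(0)$ sits at position $n-1$ of its block. The paper merely organizes the same computation from the other side (indexing positions of $\sigma(u)$ as $nj+i$ and comparing with $\sigma_r(u)_{nj+i-r}$), so there is no substantive difference.
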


\begin{proof}
  Let $u \in \A^\Z$. Let $i, r \in \{0,\dots,n-1\}$ and $j \in \Z$.
  \[
  \sigma(u)_{nj+i} =
  \begin{cases}
  0 &\quad\text{if } i \neq n-1\\
  \sigma_r(u)_{nj+i-r} &\quad\text{if } i = n-1\\
  \end{cases}
  \]
  Considering that if $i \neq n-1$, $\sigma_r(u)_{nj+i-r} = 0$, we conclude that we always have $\sigma(u)_{nj+i} = \sigma_r(u)_{nj+i-r}$, and so $\sigma_r = \rho^r \circ \sigma$.
\end{proof}

\begin{lemma}
  \label{lemma:fixpoint_unique}
  For $n \geq 3$, $\sigma_1$ has a unique fixpoint.
  For $n=2$, $\sigma_1$ has no fixpoint but ${\sigma_1}^2$ has two fixpoints.
\end{lemma}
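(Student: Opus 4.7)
The plan is to classify pointed biinfinite fixpoints of $\sigma_1$ (and of $\sigma_1^2$ when $n=2$) by reducing them to a finite case analysis on the two ``central'' letters $u_0$ and $u_{-1}$.

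First, I would establish the following general reduction, valid for any uniform substitution $s$ of size $n$ on $\A$: a pointed biinfinite word $u \in \A^\Z$ is a fixpoint of $s$ if and only if there exist $c_R, c_L \in \A$ such that $s(c_R)$ starts with $c_R$, $s(c_L)$ ends with $c_L$, the positive part of $u$ equals $\overrightarrow{s^\omega}(c_R)$, and the negative part equals $\overleftarrow{{}^\omega s}(c_L)$. The forward direction is obtained by unfolding $u = s(u)$ via the pointing convention of \cref{fig:biinf}: the block $u_0 u_1 \cdots u_{n-1}$ must equal $s(u_0)$, which forces $s(u_0)_0 = u_0$ and, iterated, determines the right half of $u$ as $\overrightarrow{s^\omega}(u_0)$; symmetrically $u_{-n} \cdots u_{-1} = s(u_{-1})$ forces $s(u_{-1})_{n-1} = u_{-1}$ and determines the left half as $\overleftarrow{{}^\omega s}(u_{-1})$. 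The converse is immediate. In particular, the number of pointed fixpoints equals the number of admissible $c_R$'s times the number of admissible $c_L$'s.

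Next I would plug in the explicit formulas. For $n \geq 3$, we have $\sigma_1(0) = 0^{n-2}10$ (which starts and ends with $0$) and $\sigma_1(1) = 0^n$ (which starts and ends with $0 \neq 1$). Hence $c_R = 0$ is the unique admissible right-letter, $c_L = 0$ the unique admissible left-letter, and $\sigma_1$ has exactly one fixpoint. For $n = 2$, $\sigma_1(0) = 10$ starts with $1$ and $\sigma_1(1) = 00$ starts with $0$, so no letter $c$ satisfies $\sigma_1(c)_0 = c$: there is no admissible $c_R$, and so $\sigma_1$ has no fixpoint. For $\sigma_1^2$, I would compute $\sigma_1^2(0) = \sigma_1(10) = 0010$ and $\sigma_1^2(1) = \sigma_1(00) = 1010$; both start with the correct letter ($0010$ with $0$, $1010$ with $1$), so both $c_R \in \{0,1\}$ are admissible, but only $c_L = 0$ is admissible (both words end with $0$, and $1010$ does not end with $1$). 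This gives exactly $2 \times 1 = 2$ fixpoints.

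The only mildly delicate point is formulating the general reduction so that it cleanly respects the pointed biinfinite convention and matches the paper's definitions of $\overrightarrow{s^\omega}$ and $\overleftarrow{{}^\omega s}$ (whose existence requires precisely the consistency conditions $s(c_R)_0 = c_R$ and $s(c_L)_{n-1} = c_L$). Once that bijective correspondence is in place, the remainder is a direct inspection of the four words $\sigma_1(0), \sigma_1(1), \sigma_1^2(0), \sigma_1^2(1)$.
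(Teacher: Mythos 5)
Your proof is correct and follows essentially the same route as the paper: the paper invokes Proposition~4 of Shallit--Wang to characterize biinfinite fixpoints as $\overleftarrow{{}^\omega s}(c_L).\overrightarrow{s^\omega}(c_R)$ with $s(c_R)$ beginning in $c_R$ and $s(c_L)$ ending in $c_L$, and then performs exactly the same inspection of $\sigma_1(0)$, $\sigma_1(1)$, $\sigma_1^2(0)$, $\sigma_1^2(1)$. The only difference is that you prove that characterization directly from the pointed-word convention rather than citing it, which is a perfectly sound (and self-contained) substitute.
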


\begin{proof}
  Proposition 4 from \cite{Shallit_Wang_1999} characterizes biinfinite fixpoints of substitutions. In the present case of $\sigma_1$, \cite{Shallit_Wang_1999} states that $w = \sigma_1(w)$ if and only if $w = y.x$ with $x = \overrightarrow{{\sigma_1}^\omega}(c)$ and $y = \overleftarrow{~^\omega\hspace{-0.25em} \sigma_1}(c')$ with $\sigma_1(c) = cv$ and $\sigma_1(c') = uc'$, $u,v\in \{0,1\}^*$, $c,c' \in \{0,1\}$.
  Notice that $\sigma_1(0) = 0^{n-2}10$ and $\sigma_1(1)=0^n$, for $n \geq 3$, so the only choice for $c$ and $c'$ is $c=c'=0$.
  Then $\sigma_1$ has a fixpoint that is $\overleftarrow{~^\omega\hspace{-0.25em} \sigma_1}(0).\overrightarrow{{\sigma_1}^\omega}(0)$ and which is unique.

  For $n=2$ the same reasoning concludes that $\sigma_1$ has no fixpoint. However, since ${\sigma_1}^2(0) = 0010$ and ${\sigma_1}^2(1) = 1010$, the same reasoning also yields that ${\sigma_1}^2$ has two fixpoints that are $\overleftarrow{~^\omega\hspace{-0.25em} ({\sigma_1}^2)}(0).\overrightarrow{{({\sigma_1}^2)}^\omega}(0)$ and $\overleftarrow{~^\omega\hspace{-0.25em} ({\sigma_1}^2)}(0).\overrightarrow{{({\sigma_1}^2)}^\omega}(1)$.
\end{proof}

\begin{lemma}
  \label{lemma:fixpoint_aperiodic}
  For every $k \in \N$ and every $i_1, \dots, i_k\in \{0,\dots,n-1\}$, the fixpoints of $s = \sigma_{i_k}\circ\dots\circ\sigma_{i_1}$ are aperiodic.
\end{lemma}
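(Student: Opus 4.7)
The plan is to proceed by contradiction: assume $u$ is a periodic fixpoint of $s := \sigma_{i_k}\circ\cdots\circ\sigma_{i_1}$ with minimal period $p$, set $w := u|_{[0,p)}$, and write $q$ for the number of $1$s in $w$. A quick induction on $k$ using the shapes $\sigma_r(0) = 0^{n-r-1}10^r$ and $\sigma_r(1) = 0^n$ shows that $s(0)$ always contains a $1$ and $s(1)$ always contains a $0$; hence $u$ is non-constant and $1 \leq q \leq p-1$.

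The key structural identity is $s(w) = w^{n^k}$, obtained by restricting $u = s(u)$ to $[0, n^k p)$: the left-hand side is the block decomposition $s(w_0)\cdots s(w_{p-1})$, while the right-hand side is forced by $p$-periodicity. Counting $1$s on both sides using the closed forms $a_k := |s(0)|_1 = (n^k - (-1)^k)/(n+1)$ and $b_k := |s(1)|_1 = (n^k + n(-1)^k)/(n+1)$ (easily derived from the recurrences $a_{k+1} = n^k - a_k$, $b_{k+1} = n^k - b_k$ with $a_1 = 1$, $b_1 = 0$) collapses to $p = (n+1)q$.

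The heart of the argument is a rigidity step obtained by iterating: since $u = s^m(u)$ for every $m \geq 1$, the same reasoning gives $s^m(w) = w^{n^{km}}$. I then take $m$ large enough that $n^{km} \geq p$. Whenever $w_j = w_{j'}$ for $j, j' \in [0,p)$, the equal blocks $s^m(w_j)$ and $s^m(w_{j'})$ appear as length-$n^{km}$ windows of $w^{n^{km}}$ at positions $n^{km}j$ and $n^{km}j'$, and a window of length at least $p$ in a $p$-periodic sequence determines its starting position modulo $p$; this forces $p \mid n^{km}(j - j')$, equivalently $j \equiv j' \pmod{p/d}$ with $d := \gcd(p, n^{km})$. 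Thus $\{j < p : w_j = 0\}$ and $\{j < p : w_j = 1\}$ each lie in single residue classes modulo $p/d$, of cardinality $d$ apiece, so $nq \leq d$ and $q \leq d$.

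To close, take $m$ yet larger so that $d$ saturates to $p'$, the largest divisor of $p$ whose prime factors all divide $n$. Because $\gcd(n, n+1) = 1$ one also has $\gcd(p', n+1) = 1$, and together with $p' \mid p = (n+1)q$ this forces $p' \mid q$, hence $p' \leq q$. Combining, $nq \leq d = p' \leq q$ yields $n \leq 1$, contradicting $n \geq 2$. The only delicate point is the window-length hypothesis $n^{km} \geq p$: this is exactly why one iterates $s$ rather than using $s$ directly, and once $m$ is large enough the arithmetic falls into place. The same argument handles right-infinite, left-infinite, and biinfinite fixpoints uniformly, since in each case one extracts a length-$p$ period $w$ and applies the block decomposition in an appropriate half-line.
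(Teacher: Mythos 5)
Your proof is correct, and it takes a genuinely different route from the paper's. The paper argues via factor complexity: it shows that $00$ and $01$ both occur in any fixpoint $w$, then iterates $s$ on these two factors to produce right-special factors of unbounded length, so $P_w$ is unbounded and $w$ cannot be periodic (invoking the complexity--periodicity equivalence recalled in Section 1.3). You instead run a direct arithmetic contradiction on a hypothetical minimal period $p$: the abelianization of every $\sigma_r$ is the same matrix, so the count of $1$'s in $s(w)=w^{n^k}$ depends only on $k$ and collapses to $p=(n+1)q$; then the rigidity step (equal windows of length $\geq p$ in a word of minimal period $p$ sit at positions congruent mod $p$) confines the $nq$ zero-positions to a single class mod $p/\gcd(p,n^{km})$, and letting the gcd saturate to the $n$-part $p'$ of $p$, coprimality of $n$ and $n+1$ gives $nq\leq p'\leq q$, i.e.\ $n\leq 1$. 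I checked the delicate points: the closed forms for $a_k,b_k$ and the resulting identity $p=(n+1)q$ are right (using $|\sigma_r(v)|_1=|v|_0$ for every $r$); the window argument does require minimality of $p$, which you assume; and the degenerate case $\gcd(p,n^{km})=p$ is still killed by $p'\mid q$ with $q<p$. Your approach buys a self-contained, quantitative proof that avoids the external complexity-vs-periodicity fact, but it leans on two features specific to this family (uniform length $n$ and the $r$-independent abelianization), whereas the paper's argument is the softer, more portable one and yields the stronger conclusion of unbounded factor complexity.
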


\begin{proof}
  To prove the aperiodicity of a fixpoint $w$ of $s$ (in the case where such a fixpoint exists), we follow a proof from \cite{Pansiot_1986}, simplified for our specific case.

  First, let us show that the two subwords $00$ and $01$ can be found in $w$.
  \begin{itemize}
    \item For $00$,
    let us define $s' = \sigma_{i_{k-1}}\circ\dots\circ\sigma_{i_1}$.
    Then, by definition, $w = \sigma_{i_k}(s'(w))$ (by convention $s'(w)=w$ if $k=1$).
    We are going to prove that $s'(w)$ always contains a $1$. As a consequence, $w = \sigma_{i_k}(s'(w))$ contains $00$ because $\sigma_{i_k}(1) = 0^n$.
    Suppose $s'(w)=\mydots000\mydots$.
    If $k=1$, it means that $w=\mydots 000\mydots$, but then $s(w)\neq w$ so this is impossible.
    If $k=2$, then $s'=\sigma_{i_1}$ so the only way to have $s'(w)=\mydots 000\mydots$ is to have $w=\mydots 111\mydots$, but again $s(w)\neq w$.
    If $k\geq 3$, let us define $t=\sigma_{i_{k-3}}\circ\dots\circ\sigma_{i_1}$. With this notation, $w = \sigma_{i_k}\circ\sigma_{i_{k-1}}\circ\sigma_{i_{k-2}}(t(w))$. The assumption $s'(w)=\mydots 000\mydots$ causes $\sigma_{i_{k-2}}(t(w))= \mydots 111\mydots$.
    However, this is impossible since $\mydots 111\mydots$ has no antecedent by $\sigma_{i_k-2}$. Therefore $s'(w)$ must contain a $1$ and we can find $00$ in $w$.

    \item For $01$, the only way for $w$ not to contain $01$ is to be of the form $w=\mydots000\mydots$, $w=\mydots111\mydots$ or $w=\mydots 1100\mydots$.
    But it is clear that ${s(\mydots000\mydots) \neq \mydots000\mydots}$, ${s(\mydots111\mydots) \neq \mydots111\mydots}$ and ${s(\mydots1100\mydots) \neq \mydots1100\mydots}$ hence none of them can be fixpoints.
  \end{itemize}

  Hence $s(00)$ and $s(01)$ can also be found in $w$ since $s(w)=w$.
  From this, we build by induction infinitely many words with two possible right extensions. We have $s(00) \neq s(01)$; consider the largest prefix on which they agree, call it $u_2$, with $|u_2|>1$. Then both $u_20$ and $u_21$ can be found in $w$. Hence $s(u_20)$ and $s(u_21)$ can also be found in $w$.
  We have $s(u_20) \neq s(u_21)$; consider the largest prefix on which they agree, call it $u_3$, with $|u_3|>|u_2|$. Then both $u_30$ and $u_31$ can be found in $w$. Hence $s(u_30)$ and $s(u_31)$ can also be found in $w$.

  By induction, we can build subwords of $w$ as large as we want that have two choices for their last letter. Hence the factor complexity of $w$ is unbounded, and so $w$ is aperiodic (see \cref{subsec:substitutions}).
\end{proof}

\subsection{Encoding substitutions in \texorpdfstring{$BS(1,n)$}{BS(1,n)}}
\label{sec:encoding}
We now show how to encode such substitutions in SFTs of the group $BS(1,n)$ given by a tileset.
We define the tileset $\tau_\sigma$ on $BS(1,n), n \in \N, n \geq 2$, to be the set of tiles shown on \cref{fig:tileset} for all $c\in\{0,1\}$ and $i\in\{0,\dots,n-1\}$. Remark that a tile is uniquely defined by the couple $(c,i)$.
\begin{figure}[ht]
  \centering
  \includegraphics[width=5cm]{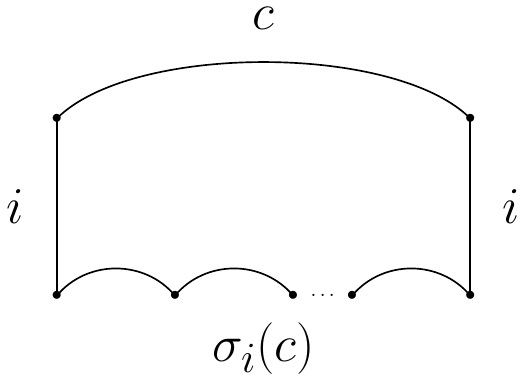}
  \caption{Tiles of $\tau_\sigma$: left and right colors are identical and equal to $i$, top color is $c$ and bottom colors are equal to $\sigma_i(c)_0,\dots,\sigma_i(c)_{n-1}$.}
  \label{fig:tileset}
\end{figure}

This tileset will be the weakly but not strongly aperiodic tileset we are looking for.
Lemmas \ref{lemma:fixpoint_unique} and \ref{lemma:fixpoint_aperiodic} study the words that can appear on levels $\mathcal{L}_g$ of the tiling, by looking at the fixpoints of $\sigma_1$.
They prove that no biinfinite word can be both a fixpoint for the $\sigma_i$'s and a periodic word, forbidding one direction of periodicity for any configuration we will encode with our tileset.
This naturally leads to the following proposition:

\begin{prop}
  \label{prop:period}
  No configuration of $X^{\tau_\sigma}$ can be $a^{k}$-periodic for any $k \in \N$.
\end{prop}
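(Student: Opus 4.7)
The plan is to read off, from any configuration $y \in X^{\tau_\sigma}$, a tower of biinfinite words encoding the top labels of successive levels of the Cayley graph, and to show that $a^k$-periodicity of $y$ would force this tower to shrink too fast. For each $h \in \Z$ I define $u^h \in \{0,1\}^\Z$ by $u^h_j := y_{b^h a^j}(\text{top}_1)$, the biinfinite word of top labels along level $\mathcal{L}_{b^h}$. Since every tile of $\tau_\sigma$ has left colour equal to right colour, and these must agree along the level, the common label $i^h \in \{0,\ldots,n-1\}$ on $\mathcal{L}_{b^h}$ is well defined. Using the tile rule $T_g(\text{bottom}_l) = T_{gb^{-1}a^{l-1}}(\text{top}_1)$ together with the $BS(1,n)$ identity $b^h a^j b^{-1} = b^{h-1} a^{jn}$, one obtains $u^{h-1}_{jn+s} = \sigma_{i^h}(u^h_j)_s$ for every $j \in \Z$ and $s \in \{0,\ldots,n-1\}$, which by \cref{lemma:exponentindex} is the clean relation $u^{h-1} = \sigma_{i^h}(u^h)$.

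The main technical step, and what I expect to be the principal obstacle, is the following lemma on periodicity of images: if $u \in \{0,1\}^\Z$ satisfies $u \neq 1^\infty$ and $\sigma_i(u)$ has period $k \geq 1$, then $n \mid k$ and $u$ has period $k/n$. The idea is that $u \neq 1^\infty$ forces $\sigma_i(u)$ to contain a $1$, and the explicit formulas $\sigma_i(0) = 0^{n-i-1}10^i$ and $\sigma_i(1) = 0^n$ show that every $1$ in $\sigma_i(u)$ sits at a position congruent to $n-i-1$ modulo $n$; a period of $\sigma_i(u)$ must preserve this residue class, hence $n \mid k$. Writing $k = nq$, the equality $\sigma_i(u)_{nj+s+nq} = \sigma_i(u)_{nj+s}$ becomes $\sigma_i(u_{j+q})_s = \sigma_i(u_j)_s$ for all $j$ and $s$, and the fact that $\sigma_i(0) \neq \sigma_i(1)$ then gives $u_{j+q} = u_j$.

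To conclude, suppose $y$ is $a^k$-periodic with $k \geq 1$; the action formula $(a^k \cdot y)_{a^j} = y_{a^{j-k}}$ shows that $u^0$ is $k$-periodic. I then iterate the lemma upward along the tower $u^{h-1} = \sigma_{i^h}(u^h)$: as long as $u^h \neq 1^\infty$, the lemma forces the period of $u^{h-1}$ to be divisible by $n$ and gives $u^h$ a period equal to (period of $u^{h-1}$)$/n$; starting from $k_0 = k$ this yields $k_h = k/n^h$. If $u^h \neq 1^\infty$ for every $h \geq 0$, this demands $n^h \mid k$ for all $h$, which is impossible for a finite positive integer $k$. Hence there exists some $H \geq 0$ with $u^H = 1^\infty$, but then $u^{H+1}$ (which must exist, since $y$ is defined on all of $BS(1,n)$) has to satisfy $\sigma_{i^{H+1}}(u^{H+1}) = 1^\infty$; this is impossible because any word in the image of any $\sigma_i$ has density of $1$'s at most $1/n < 1$, so cannot equal $1^\infty$. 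This contradiction proves the proposition.
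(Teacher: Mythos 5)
Your proof is correct, but it takes a genuinely different route from the paper's. The paper keeps the period fixed as it climbs: since the base level is $k$-periodic it is also $nk$-periodic, and injectivity of each $\sigma_i$ on letters then makes every level $\mathcal{L}_{b^l}$, $l>0$, $k$-periodic as well; a pigeonhole among the finitely many $k$-periodic words over a finite alphabet produces two identical levels, hence a periodic fixpoint of some composition $\sigma_{i_N}\circ\dots\circ\sigma_{i_1}$, contradicting \cref{lemma:fixpoint_aperiodic}. You instead exploit the rigid position of the letter $1$ inside each block $\sigma_i(0)=0^{n-i-1}10^i$ to force every period of $\sigma_i(u)$ to be divisible by $n$ (as soon as $\sigma_i(u)$ contains a $1$ at all), so the period is divided by $n$ at each step up the tower $u^{h-1}=\sigma_{i^h}(u^h)$; the conclusion is an infinite-divisibility contradiction, with the degenerate word $1^\infty$ correctly excluded because it lies in the image of no $\sigma_i$. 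Your argument is more self-contained --- it invokes neither \cref{lemma:fixpoint_aperiodic} nor any pigeonhole --- and it yields the sharper quantitative fact that a period of the base level would have to be divisible by every power of $n$. The paper's argument, on the other hand, depends less on the exact shape of the $\sigma_i$: it only needs injectivity on letters together with the aperiodicity of fixpoints of compositions, so it would survive a change of substitution in which the $1$'s no longer occupy a fixed residue class modulo $n$, whereas your divisibility step would not.
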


\begin{proof}
  Suppose that there is a configuration $x$ of $X^{\tau_\sigma}$ such that for any ${g \in {BS(1,n)}}$, ${x_{a^{k} \cdot g} = x_g}$ ($a^k$-periodicity).
  Call $w := (x_{a^j})_{j \in \Z}$ the biinfinite word based on level $\mathcal{L}_e$. $w$ is $k$-periodic by $a^k$-periodicity of the configuration $x$. But $w$ is also $nk$-periodic. Hence $(x_{b a^j})_{j \in \Z}$ is $k$-periodic. Indeed, by construction, when applying the correct substitution $\sigma_i$ to $x_{b a^j}$ and $x_{ba^{j+k}}$, one obtains the words $x_{a^{nj}} \dots x_{a^{nj+n-1}}$ and $x_{a^{nj+nk}} \dots x_{a^{nj+nk+n-1}}$ which are one and the same by $nk$-periodicity of $w$. Since there is only one preimage for a word by $\sigma_i$, $x_{b a^j} = x_{ba^{j+k}}$.
  By the same argument, one can show that for any integer $l > 0$, $(x_{b^l a^j})_{j \in \Z}$ must be $k$-periodic. However, these biinfinite sequences only use digits among $\{0,1,2\}$ so there is a finite number of such sequences. In particular, two of these sequences are the same. Since one is obtained from the other by applying the correct succession of $\sigma_{i}$'s, we get a periodic sequence that is a fixpoint of some $s = \sigma_{i_N}\circ\dots\circ\sigma_{i_1}$ for some $i_1, \dots, i_N\in \{0,\dots,n-1\}$. This contradicts \cref{lemma:fixpoint_aperiodic}.
\end{proof}

\begin{lemma}
  \label{lemma:weakap}
  There exists a weakly periodic configuration in $X^{\tau_\sigma}$ for $n \geq 3$.
\end{lemma}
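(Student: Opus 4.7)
The plan is to construct a configuration $y \in X^{\tau_\sigma}$ whose stabilizer contains $b$; since $b \neq e$ in $BS(1,n)$, this will yield a weakly periodic configuration. I will work with the standard identification $BS(1,n) \simeq \Z[1/n] \rtimes \Z$ given by $a \leftrightarrow (1,0)$ and $b \leftrightarrow (0,1)$, under which $b \cdot (q,h) = (nq, h+1)$. Since $b$-orbits are parameterized by $x = q/n^h \in \Z[1/n]$, a $b$-periodic configuration is equivalent to a function $f\colon \Z[1/n] \to \{0,1\}$ through $y_{(q,h)} := f(q/n^h)$.

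Unwinding the definition of $\tau_\sigma$ with $x = q/n^h$, the tile-matching constraints become
\[
f(nx+l) = \sigma_{i(r)}(f(x))_l \quad \text{for all } l \in \{0,\ldots,n-1\},
\]
where $r = x \bmod 1 \in \Z[1/n]/\Z$ is the residue of $x$ (equal to the residue of the level of any representative of the orbit $x$), and $i(r) \in \{0,\ldots,n-1\}$ is the level index common to every level of residue $r$. Let $w$ be the unique fixpoint of $\sigma_1$, which exists by \cref{lemma:fixpoint_unique} since $n \geq 3$. Setting $f|_{\Z} := w$ and $i(0) := 1$, the fixpoint identity $w_{nj+l} = \sigma_1(w_j)_l$ makes the constraint hold on every integer orbit.

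I will then extend $f$ to all of $\Z[1/n]$ by induction on the depth $s \geq 0$ of $x$, defined as the smallest integer with $n^s x \in \Z$. At depth $s+1$ the children $nx+l$ of $x$ all lie at depth at most $s$, so the tuple $(f(nx),\ldots,f(nx+n-1))$ is already defined. The key combinatorial claim is that this tuple is always either $0^n$ or contains exactly one $1$: in the former case set $f(x) := 1$, and in the latter case, with the $1$ at position $l^*$, set $f(x) := 0$ and $i(r) := n - l^* - 1$. For the base case (depth $0$ to depth $1$) the claim follows from the structure of $\sigma_1$: any length-$n$ window of $w$ spans at most two consecutive $\sigma_1$-blocks, each of which is $0^n$ or $0^{n-2}10$, so a case analysis on the starting position of the window confirms that at most one $1$ can appear and shows that $l^*$ (hence $i$) depends only on the residue $r$. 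The induction step reduces to the base case via the observation that on each residue class $r$ at depth $s+1$, $f$ turns out to equal a shifted copy of $w$, so length-$n$ windows of $f$ are still length-$n$ windows of $w$ (with a shift determined by $r$), keeping the property and making $i(r)$ well defined.

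The main obstacle will be this combinatorial bookkeeping, in particular verifying that $i(r)$ really depends only on $r$ and that the ``shifted copy of $w$'' description propagates through the induction. Once $f$ is built, the configuration $y_{(q,h)} := f(q/n^h)$ satisfies every local constraint of $\tau_\sigma$ by construction, is $b$-periodic since $y_{b \cdot (q,h)} = f(nq/n^{h+1}) = f(q/n^h) = y_{(q,h)}$, and therefore has $b \in Stab_{BS(1,n)}(y) \setminus \{e\}$, proving that $y$ is a weakly periodic configuration in $X^{\tau_\sigma}$.
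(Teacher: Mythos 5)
Your plan is correct and ultimately produces the very same configuration as the paper: the integer levels carry the unique fixpoint $w$ of $\sigma_1$, every other level carries a shifted, de-substituted copy of $w$, and the assignment is constant on $b$-orbits. What differs is the organization of the verification. The paper writes the configuration in closed form through the quasi-normal form $g=b^{-k}a^lb^m$, setting $x_{b^{-k}a^lb^{m}}=(w_{F^{m}(l)},R\circ F^{m-1}(l))$ with $F(k)=\lfloor (k+1)/n\rfloor$ and $R(k)=(k+1)\bmod n$; it must then check well-definedness (the quasi-normal form is not unique) and verify the adjacency rules by direct computation using \cref{lemma:exponentindex} and \cref{lemma:remainder}. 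You instead quotient by the $b$-orbits first, working on $\Z[1/n]$, which makes both well-definedness and $b$-periodicity immediate, and you build the configuration upward by unique de-substitution, so that adjacency holds by construction; the entire burden is transferred to your ``key combinatorial claim''. That claim is true and is exactly the recognizability of the family $\{\sigma_r\}$ on $w$: since $w=\sigma_1(w)$, the letter $1$ occurs in $w$ precisely at the positions $nj+(n-2)$ with $w_j=0$, and any window $[p,p+n-1]$ contains exactly one position of that form, so each level-aligned window is $0^n$ or carries a single $1$ at a position determined by $p\bmod n$; propagating this through your induction (each residue class is again a shifted copy of $w$) recovers precisely the paper's $F$ and $R$, e.g.\ your $i(r)=n-l^*-1$ evaluates to $(l+1)\bmod n=R(l)$ on the first level. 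In short, the two arguments are the same construction presented recursively versus in closed form: yours buys a cleaner treatment of well-definedness and periodicity at the price of the de-substitution bookkeeping you correctly identify as the main remaining work.
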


\begin{proof}
  We define $w$ the unique fixpoint of $\sigma_1$ obtained thanks to \cref{lemma:fixpoint_unique}.

  Let $f(k) = \lfloor\frac{k}{n}\rfloor$ be the function that maps $k$ to the quotient in the euclidean division of $k$ by $n$ and $r(k)$ its remainder.
  We also define $F(k) = f(k+1)$ and $R(k) = r(k+1)$. This means that $nF(k) + R(k) = k+1$, $F(ln) = l + \lfloor\frac{1}{n}\rfloor = l$, but also $F(k+n) = \lfloor\frac{k+1+n}{n}\rfloor = F(k)+1$, and consequently $F^m(k+n^m) = F^m(k)+1$.

  \subsubsection*{\textbf{$X^{\tau_\sigma}$ is nonempty}}

  We define a configuration $x$ describing which tile $(c_g,i_g)$ (a tile being uniquely defined by such a couple) is assigned to $g$, i.e. $x_g = (c_g,i_g)$, using the quasi-normal form $g = b^{-k}a^lb^{m}$. 
  Then, we check that $x$ does verify the adjacency rules.
  Define $x \in {\tau_\sigma}^{BS(1,n)}$ by

  \[
  \begin{cases}
  x_{b^{-k}a^l} := (w_l,1)\\
  x_{b^{-k}a^lb^{m}} := (w_{F^{m}(l)},R \circ F^{m-1}(l))$ for $m > 0.
  \end{cases}
  \]

  Remember that \cref{lemmaBS1} states that any $g \in BS(1,n)$ can be written $b^{-k_1}a^{l_1}b^{m_1}$. Suppose it has a second form $b^{-k_2}a^{l_2}b^{m_2}$ with $k_2>k_1$ up to exchanging the notations (were they equal, it is easy to prove the two forms would be the same).
  Then $b^{k_1-k_2}a^{l_2}b^{m_2-m_1} = a^{l_1}$, that is, $b^{k_1-k_2}a^{l_2-l_1 n^{k_2-k_1}}b^{m_2-m_1} = e$. 
  This means that $k_2 - k_1 = m_2 - m_1 > 0$ and $l_2= l_1 n^{k_2-k_1}$.
  With that, we prove our $x$ is well-defined. $k_2>k_1$ causes $m_2>0$ in order to have $k_2 - k_1 = m_2 - m_1 > 0$. Consequently,

  \begin{align*}
  x_{b^{-k_2}a^{l_2}b^{m_2}} & = (w_{F^{m_2}(l_2)},R \circ F^{m_2-1}(l_2))\\
  & = (w_{F^{m_1+k_2-k_1}(l_1 n^{k_2-k_1})},R \circ F^{m_1+k_2-k_1-1}(l_1 n^{k_2-k_1}))\\
  & = (w_{F^{m_1}(l_1)},R \circ F^{m_1-1}(l_1))\\
  & = x_{b^{-k_1}a^{l_1}b^{m_1}}
  \end{align*}

  \noindent
  with a variation on the second to last line if $m_1=0$: we have $R \circ F^{k_2-k_1-1}(l_1 n^{k_2-k_1}) = R(l_1 n) = 1$.

  Now, we prove that $x \in X^{\tau_\sigma}$. Let $g = b^{-k}a^lb^{m}$.
    \begin{itemize}
        \item If $m>0$, we have
            \begin{align*}
            x_{ga}(\text{left}) & = x_{b^{-k}a^{l+n^{m}}b^{m}}(\text{left})\\
            & = R \circ F^{m-1}(l+n^{m})\\
            & = R(F^{m-1}(l)+n)\\
            & = R \circ F^{m-1}(l)\\
            & = x_g(\text{right}).
            \end{align*}
        \item If $m=0$, we have
            \begin{align*}
            x_{ga}(\text{left}) & = x_{b^{-k}a^{l+1}}(\text{left})\\
            & = 1\\
            & = x_g(\text{right}).
            \end{align*}
    \end{itemize}

  Let $j \in \{0,\dots,n-1\}$. We have
  \begin{align*}
  x_{ga^{-j}b}(\text{bottom}_{j+1}) & = x_{b^{-k}a^{l-jn^m}b^{m+1}}(\text{bottom}_{j+1}) & \text{($g=b^{-k}a^lb^m$)}\\
  & = \sigma_{R \circ F^{m}(l-jn^m)}(w_{F^{m+1}(l-jn^m)})_{j} & \text{(by definition of $x$)}\\
  & = \sigma_{R \circ F^{m}(l-jn^m)}(w)_{n F^{m+1}(l-jn^m)+j} & (\cref{lemma:exponentindex})\\
  & = \sigma(w)_{n F^{m+1}(l-jn^m)+j+R \circ F^{m}(l-jn^m)} & (\cref{lemma:remainder})\\
  & = \sigma(w)_{F^{m}(l-jn^m)+j+1} & \text {(by definition of } F \text{ and } R \text{)}\\
  & = \sigma_1(w)_{F^{m}(l-jn^m)+j} & (\cref{lemma:remainder})\\
  & = w_{F^{m}(l-jn^m)+j} & \text{(since $w$ is a fixpoint of $\sigma_1$)}\\
  & = w_{F^{m}(l)} & (F^{m}(l-jn^m) = F^m(l)-j)\\
  & = x_g(\text{top})
  \end{align*}

  Consequently, $x$ describes a valid configuration of $X^{\tau_\sigma}$: all adjacency conditions are verified.

    \subsubsection*{\textbf{$x$ is $b$-periodic}}

  With the definition of $x$, it is easy to check that for any $g \in BS(1,n)$, $x_{bg} = x_g$. Hence it is a weakly periodic configuration.
\end{proof}

We can now obtain our second main theorem:

\begin{theorem}
  \label{th:main}
  The tileset $\tau_\sigma$ forms a weakly aperiodic but not strongly aperiodic SFT on $BS(1,n), n \in \N, n \geq 2$.
\end{theorem}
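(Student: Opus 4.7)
The plan is to assemble \cref{th:main} from three facts: non-emptiness, weak aperiodicity, and the existence of a weakly periodic configuration. Non-emptiness and the latter are provided by the construction of \cref{lemma:weakap} for $n \geq 3$, so the remaining work is (i) a pigeonhole argument turning \cref{prop:period} into weak aperiodicity, and (ii) a modification of the construction to cover $n = 2$.

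For weak aperiodicity, I would argue the contrapositive. Suppose $x \in X^{\tau_\sigma}$ is strongly periodic with $|Orb_{BS(1,n)}(x)| = N < \infty$. Then the $N+1$ configurations $x, a\cdot x, a^2\cdot x, \ldots, a^N \cdot x$ cannot all be distinct, so there exist integers $0 \leq k_1 < k_2 \leq N$ with $a^{k_1}\cdot x = a^{k_2}\cdot x$, giving $a^{k_2-k_1}\cdot x = x$. This makes $x$ an $a^{k_2-k_1}$-periodic configuration, contradicting \cref{prop:period}. Combined with the non-emptiness of $X^{\tau_\sigma}$ (guaranteed by \cref{lemma:weakap} for $n\geq 3$, and by the construction sketched below for $n=2$), this establishes weak aperiodicity.

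For the failure of strong aperiodicity when $n \geq 3$, I would directly invoke \cref{lemma:weakap}, which already exhibits a $b$-periodic (hence weakly periodic) configuration. The main obstacle is the case $n = 2$, where $\sigma_1$ has no fixpoint and the construction of \cref{lemma:weakap} does not apply verbatim. Here I would use \cref{lemma:fixpoint_unique} to pick a fixpoint $w$ of $\sigma_1^2$ and set $w' := \sigma_1(w)$, so that $\sigma_1(w')=w$ and the pair alternates under $\sigma_1$. Then, using the quasi-normal form $g = b^{-k}a^l b^{m}$ from \cref{lemmaBS1}, I define a configuration analogous to that of \cref{lemma:weakap} but selecting the word $w$ or $w'$ according to the parity of $m$ (and symmetrically at negative depths via $k$). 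Concretely, the tile at $b^{-k}a^l b^m$ would read the letter of $w$ or $w'$ at index $F^m(l)$ depending on $m \bmod 2$, with the $\sigma_i$ coordinate $R\circ F^{m-1}(l)$ unchanged. The verifications that the tile assignment is well-defined on $BS(1,2)$ and that horizontal and vertical adjacency rules hold mirror those of \cref{lemma:weakap}, using $\sigma_1(w')=w$ and $\sigma_1(w)=w'$ in place of the single identity $\sigma_1(w)=w$.

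The resulting configuration is clearly $b^2$-periodic (the construction depends only on $k$ and $m$ modulo $2$ and on $l$), and $b^2$ is a non-trivial element of $BS(1,2)$, so the configuration is weakly periodic. The hardest step is checking that the alternation does not break the vertical adjacency computation performed in the proof of \cref{lemma:weakap}; I expect this to reduce to the identity $\sigma_1(w)_i = w'_i$ for the correct parity, which follows immediately from the definition $w' = \sigma_1(w)$ and $\sigma_1^2(w)=w$. Putting these pieces together yields \cref{th:main} for all $n \geq 2$.
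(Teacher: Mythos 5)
Your proposal follows essentially the same route as the paper: weak aperiodicity is obtained from \cref{prop:period} by the pigeonhole argument on the powers of $a$ (which have infinite order), the failure of strong aperiodicity for $n\geq 3$ is delegated to \cref{lemma:weakap}, and for $n=2$ you build an alternating configuration from the two fixpoints $w$ and $w'=\sigma_1(w)$ of $\sigma_1^{\,2}$, yielding a $b^2$-periodic configuration --- exactly the paper's strategy. One detail in your $n=2$ construction needs correcting: choosing $w$ versus $w'$ ``according to the parity of $m$'' is not well defined, because the quasi-normal form $b^{-k}a^{l}b^{m}$ of \cref{lemmaBS1} is not unique and $m$ alone can change parity between two writings of the same group element (e.g.\ $a=b^{-1}a^{n}b$ has $(k,m)=(0,0)$ or $(1,1)$); by \cref{remarkheight} only $m-k$ is an invariant, so the selection must depend on the parity of $k+m$ (equivalently of $||g||_b$), which is what the paper does. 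With that fix the well-definedness and adjacency verifications go through as you describe, using $\sigma_1(w)=w'$ and $\sigma_1(w')=w$ in place of the single fixpoint identity.
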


\begin{proof}
  First, in the $n \geq 3$ case, there is a weakly periodic configuration in $X^{\tau_\sigma}$, see \cref{lemma:weakap}. Hence it is not a strongly aperiodic SFT.

  In the $n = 2$ case, we define $u$ and $v$ the two fixpoints of ${\sigma_1}^2$ (\cref{lemma:fixpoint_unique} again) and remark that $v = \sigma_1(u)$ and $u = \sigma_1(v)$. We define a configuration $x \in {\tau_\sigma}^{BS(1,n)}$ by:
  \[ x_{b^{-k}a^l}:=
  \begin{cases}
    (u_l,1)$ if $k+m \equiv 0$ mod $2\\
    (v_l,1)$ if $k+m \equiv 1$ mod $2
  \end{cases} \]
  \[ x_{b^{-k}a^lb^{m}}:=
  \begin{cases}
  (u_{F^{m}(l)},R \circ F^{m-1}(l))$ for $m > 0$ if $k+m \equiv 0$ mod $2\\
  (v_{F^{m}(l)},R \circ F^{m-1}(l))$ for $m > 0$ if $k+m \equiv 1$ mod $2
  \end{cases}\]
  and we use the same notations as in the proof of \cref{lemma:weakap}. The reasoning is also the same, except instead of using $w$ an alternation appears between $u$ and $v$ in all the equations. As a consequence, the configuration is $b^2$-periodic instead of $b$. Once again, $X^{\tau_\sigma}$ is consequently not strongly aperiodic.

  Now, using \cref{prop:period}, and since all powers of $a$
  are of infinite order in ${BS(1,n)}$, we get that for any valid configuration $x$ of $\tau_\sigma$, $|Orb_{BS(1,n)}(x)| = +\infty$, for any $n \geq 2$. Hence no configuration of $\tau_\sigma$ is strongly periodic, and so the SFT is weakly aperiodic.
\end{proof}

\section{A strongly aperiodic SFT on \texorpdfstring{$BS(n,n)$}{BS(n,n)}}
\label{sec:strongnn}

This section is a mere assembly of known results, that we think are worth gathering in the context of the current paper. 
It uses a theorem from \cite{computable} seen as an extension of the construction presented in \cite{Kari2008}. The idea behind that theorem is that $G \times \Z$ admits a strongly aperiodic SFT as soon as $G$ can encode piecewise affine functions. This is reflected by the $PA^\prime$ condition described in \cite{computable} and restated below.

\begin{definition}
  Let $k \in \N$. Let $\mathcal{F} = \{f_i\colon P_i \rightarrow P_i^{\prime} \mid i \in \{0,\dots,k\}\}$ be a finite set of piecewise affine rational homeomorphisms, where each $P_i$ and $P_i^{\prime}$ is a finite union of bounded rational polytopes of $\mathbb{R}^n$. Let $D = \bigcap_{i=1}^k P_i \cap \bigcap_{i=1}^k P_i^{\prime}$ be the common domain of all functions of $\mathcal{F}$ and their inverses.
  
  Let $S_{\mathcal{F}}$ be the closure of the set $\{f_i, {f_i}^{-1} \mid i \in \{1,\dots,k\} \}$ under composition. We define $G_{\mathcal{F}}$, the group $\{f|_D \mid f \in S_{\mathcal{F}}\}$.

  A finitely generated group $G$ is $PA^\prime$-recognizable if there exists a finite set $\mathcal{F}$ of piecewise affine rational homeomorphisms such that:

    (A) $G \cong G_{\mathcal{F}}$;

    (B) $\forall t \in D, \forall g \in \mathcal{F}, [\forall f \in \mathcal{F}, g(f (t)) = f(t)] \Rightarrow g = Id$.
\end{definition}

\begin{theorem}[\hspace{1sp}\cite{computable}, Th. 7]
  \label{PA}
  If $G$ is an infinite finitely generated $PA^{\prime}$-recognizable group, then $\Z \times G$ admits a strongly aperiodic SFT.
\end{theorem}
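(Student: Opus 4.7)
The plan is to adapt Kari's piecewise-affine encoding, already recalled in \cref{sec:AK}, from $\Z^2$ to $\Z \times G$. By $PA'$-recognizability, identify $G$ with the group $G_{\mathcal{F}}$ of piecewise affine rational homeomorphisms generated by $\mathcal{F} = \{f_1, \ldots, f_k\}$ acting on the common domain $D \subseteq \R^n$. The $\Z$ factor will carry the dynamics, namely the iterated application along levels of some generator $f_{i_k} \in \mathcal{F}$, while the $G$ factor will carry an encoding of the orbit $\{g(t_k) : g \in G\}$ of a base point $t_k \in D$ under the piecewise affine action of $G$.

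Concretely, I would build an SFT on $\Z \times G$ whose configurations assign to each vertex $(k, g)$ a finite-alphabet datum that simultaneously encodes: (i) the value $g(t_k) \in D$ via a multidimensional analogue of the balanced representation of \cref{def:balanced_repr}, and (ii) an index $i_k \in \{1, \ldots, k\}$ indicating which generator is being applied between levels $k$ and $k+1$, kept constant along the row $\{k\} \times G$ by a synchronization alphabet as in the proof of \cref{th:superthm}. Local rules along $G$ would certify that neighbouring cells are related by the appropriate piecewise affine action, using a multi-dimensional version of the multiplying-tileset construction of \cref{prop:multiply}; local rules in the $\Z$ direction would enforce $t_{k+1} = f_{i_k}(t_k)$. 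Nonemptiness of the SFT is then witnessed by picking any $t_0 \in D$ and any biinfinite sequence $(i_k)_{k \in \Z}$ of generators compatible with the domains, and encoding the resulting orbit.

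Strong aperiodicity would then split into cases according to which factor of a hypothetical period is nontrivial. A period of the form $(m, e)$ with $m \neq 0$ would force some finite composition of elements of $\mathcal{F}$ to fix every base point $t_k$ realized in the configuration; this can be ruled out by pre-selecting $\mathcal{F}$ so that no nontrivial composition has a fixed point in $D$, in the spirit of Kari's use of a multiplicative system with no periodic points. A period of the form $(0, h)$ with $h \neq e$ would force $h$ to fix $f(t_k)$ for every $f \in \mathcal{F}$ appearing in the prefix of the orbit, and hypothesis (B) of the $PA'$ definition is precisely tailored to conclude $h = \mathrm{Id}$, contradicting $h \neq e$ under the isomorphism $G \cong G_{\mathcal{F}}$. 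A mixed period $(m, h)$ reduces to these two after raising to a suitable power and using infiniteness of $G$.

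The main obstacle will be constructing the finite-alphabet encoding of step (i): the balanced representation and multiplying tileset of \cref{sec:AK} are tailored to a single real coordinate with a rational linear map, and generalizing them simultaneously to the multi-dimensional target $D \subseteq \R^n$ and to piecewise (rather than purely) affine generators requires both a higher-dimensional Beatty-like sequence and a careful product-alphabet bookkeeping of which affine piece of each $f_i$ is active along each row. Once this encoding is in place, all the dynamical arguments above are essentially direct consequences of how it certifies the action of $G$ and the iteration of $\mathcal{F}$.
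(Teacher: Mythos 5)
First, be aware that the paper does not prove this statement: \cref{PA} is imported verbatim from \cite{computable} (Theorem 7 there) and used as a black box in \cref{sec:strongnn}, so there is no in-paper proof to compare your sketch against. Judged on its own terms, your proposal does capture the general flavour of Jeandel's construction (balanced encodings of points of $D$, local rules certifying the piecewise affine action, condition (B) killing periods coming from the $G$ factor), but it contains two genuine gaps.

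The main one is that you have assigned the two factors the wrong roles, and this is exactly what makes your self-identified ``main obstacle'' unresolvable as stated. A balanced (Beatty-type) representation as in \cref{def:balanced_repr} encodes a real number as a \emph{density} of symbols along a line isomorphic to $\Z$; this is why in \cref{sec:AK} the representation lives along the $a$-direction and the map is applied across levels. In your layout the $\Z$ factor is consumed by the dynamics $t_{k+1}=f_{i_k}(t_k)$, so the point $g(t_k)\in D$ would have to be spread along the $G$ factor --- but $G$ is an arbitrary finitely generated group (in the application of \cref{sec:strongnn} it contains a free group), with no linear order and no F{\o}lner-type averaging along which a density could be read off. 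The construction of \cite{computable} does the opposite: each coset $\{g\}\times\Z$ carries the balanced representation of the single point $g(t)$, and local rules between the lines indexed by $g$ and $gf_i$ certify that the latter encodes $f_i$ applied to the former. The second gap is your treatment of a period $(m,e)$ with $m\neq 0$: you propose to ``pre-select $\mathcal{F}$ so that no nontrivial composition has a fixed point in $D$'', but $\mathcal{F}$ is supplied by the hypothesis of $PA'$-recognizability and is not yours to choose, and conditions (A)--(B) do not forbid elements of $G_{\mathcal{F}}$ from having fixed points in $D$ (condition (B) only forbids a generator from fixing the whole test set $\{f(t)\mid f\in\mathcal{F}\}$). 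In the correct orientation this case instead reads: a period in the $\Z$ factor forces every balanced sequence, hence every coordinate of every $g(t)$, to be periodic and therefore rational with bounded denominator, so the orbit $\{g(t)\mid g\in G\}$ is finite; one then combines this with condition (B) and the infiniteness of $G$ to reach a contradiction. (Your reduction of a mixed period $(m,h)$ to the two pure cases by raising to a power also fails when $h$ has infinite order, since $(m,h)^N=(Nm,h^N)$ is never of either pure form.) So the overall strategy is salvageable, but not along the lines you propose.
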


We need two additional propositions to obtain the desired result on $BS(n,n)$:

\begin{prop}[\hspace{1sp}\cite{commensurable}, Prop. 9 \& 10]
  \label{finiteindexap}
  If $G$ is a finitely generated group and $H$ is a finitely generated subgroup of $G$ of finite index, then we have the following:
  \begin{center}
    $H$ admits a weakly aperiodic SFT $\Leftrightarrow$ $G$ admits a weakly aperiodic SFT

    $H$ admits a strongly aperiodic SFT $\Leftrightarrow$ $G$ admits a strongly aperiodic SFT.
  \end{center}
\end{prop}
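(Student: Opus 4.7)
The plan is to build an explicit bijective correspondence between configurations on $G$ and configurations on $H$ over an enlarged alphabet, and then transfer periodicity properties along it. Fix a right transversal $\{t_1,\dots,t_k\}$ for $H$ in $G$ with $t_1=e$, so that every $g\in G$ writes uniquely as $ht_i$ with $h\in H$ and $i\in\{1,\dots,k\}$, and define $\Phi\colon\A^G\to(\A^k)^H$ by $\Phi(x)(h)=(x(ht_1),\dots,x(ht_k))$. The first step is to verify that $\Phi$ is a homeomorphism intertwining the left $H$-action on $\A^G$ (obtained by restricting the $G$-action) with the natural left $H$-action on $(\A^k)^H$, and that it preserves the SFT property in both directions: a finite forbidden window $W\subseteq G$ corresponds, after writing each element of $W$ as $ht_i$ and grouping the $h$'s, to a finite forbidden window in $H$ over the alphabet $\A^k$, and the converse construction also produces a finite window in $G$ from one in $H$.

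Next I would establish the stabilizer identity $Stab_H(\Phi(x))=Stab_G(x)\cap H$ directly from the definition of $\Phi$ and the $H$-action. Writing $S=Stab_G(x)$, an elementary argument shows $[S:S\cap H]\leq[G:H]$, since the map $s(S\cap H)\mapsto sH$ from $S/(S\cap H)$ to $G/H$ is injective. Combining this with
\[
  [G:S\cap H]=[G:H]\cdot[H:S\cap H]=[G:S]\cdot[S:S\cap H],
\]
and using that $[S:S\cap H]$ and $[G:H]$ are both finite, one sees that $[G:S]$ is finite if and only if $[H:S\cap H]$ is finite. Taking contrapositives yields both directions of the weak-aperiodicity equivalence.

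For strong aperiodicity, the direction from $G$ to $H$ is immediate: if $S=\{e\}$ then $S\cap H=\{e\}$, i.e.~$Stab_H(\Phi(x))=\{e\}$. The converse is the subtle step: from $S\cap H=\{e\}$, a pigeonhole on $G/H$ yields that every $g\in S$ satisfies $g^m\in H$ for some $m\leq[G:H]$, hence $g^m\in S\cap H=\{e\}$, so $g$ has finite order bounded by $[G:H]$. This forces $g=e$ as soon as $G$ is torsion-free, which is the case for every Baumslag-Solitar group (and in particular for $BS(n,n)$, the application the paper makes of the proposition). The main obstacle I expect in a fully general proof is dealing with groups containing torsion: one would need to further rule out torsion stabilisers, typically by superposing an auxiliary finite SFT (for instance a colouring of the cosets of $H$) engineered so that no nontrivial element of finite order can preserve a configuration of the combined SFT.
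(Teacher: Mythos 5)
First, note that the paper does not prove this proposition: it is quoted from the reference \cite{commensurable}, so your proposal has to be measured against the argument there rather than against anything in the text. Your direction ``$G$ admits one $\Rightarrow$ $H$ admits one'' is correct and is essentially the standard argument: the higher-block map $\Phi$ is an $H$-equivariant homeomorphism, it sends SFTs on $G$ to SFTs on $H$ (grouping the occurrences of each forbidden pattern according to the coset of its base point), the identity $Stab_H(\Phi(x))=Stab_G(x)\cap H$ holds, and your index computation correctly converts it into the weak statement and into the easy half of the strong one.

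The genuine gap is in the converse direction ``$H$ admits one $\Rightarrow$ $G$ admits one'' --- which is the only direction the paper actually uses, to pass from $\Z\times\mathbb{F}_n$ to $BS(n,n)$. Given an SFT $Y$ on $H$, the set $\Phi^{-1}(Y)$ is closed and $H$-invariant but in general not $G$-invariant (already for $G=\Z$, $H=2\Z$ and $Y$ a single constant configuration, $\Phi^{-1}(Y)$ is a single point that is not $\Z$-invariant), so ``producing a finite window in $G$ from one in $H$'' does not by itself yield a subshift of $\A^G$: forbidding the translated patterns at every $g\in G$ imposes strictly more than forbidding them at every $h\in H$, and a finite union of translates of $\Phi^{-1}(Y)$ need not be of finite type. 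The cited proof repairs this with an auxiliary layer: the nearest-neighbour SFT over the alphabet $H\backslash G$ cut out by the rule $c(gs)=c(g)s$, whose configurations are exactly the $k$ coset colourings $g\mapsto Hg_0g$ and whose stabilizers are exactly the conjugates $g_0^{-1}Hg_0$; one then overlays the constraints of $Y$ relative to that colouring. Crucially, this same layer removes the torsion obstruction you flag for strong aperiodicity: it confines $Stab_G(x)$ to a single conjugate of $H$, where the strong aperiodicity of $Y$ kills it, so no torsion-freeness hypothesis on $G$ is needed. Your closing suggestion of ``a colouring of the cosets of $H$'' is exactly this device, but it must be carried out as the core of the construction rather than appended as a contingency.
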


The following proposition is known, but we include a self-contained proof.

\begin{prop}
  \label{prop:BSnn}
  $BS(n,n)$ admits $\Z \times \mathbb{F}_n$ as a subgroup of finite index, where $\mathbb{F}_n$ is the free group of order $n$.
\end{prop}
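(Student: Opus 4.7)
The plan is to exhibit an explicit finite-index subgroup of $BS(n,n)$ isomorphic to $\Z \times \mathbb{F}_n$. The defining relation $ba^nb^{-1} = a^n$ shows that $z := a^n$ commutes with $b$ (and trivially with $a$), so $z$ is central in $BS(n,n)$. I would next consider the homomorphism $\phi\colon BS(n,n) \to \Z/n\Z$ defined by $\phi(a) = 1$ and $\phi(b) = 0$, which is well-defined since the defining relation maps to $0 + n - 0 - n \equiv 0 \pmod{n}$, and set $H := \ker \phi$, a subgroup of index exactly $n$.

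Applying the Schreier method with the right transversal $\{1, a, \dots, a^{n-1}\}$, a direct computation yields that $H$ is generated by $z$ together with the elements $b_i := a^i b a^{-i}$ for $i = 0, \dots, n-1$. Since $z$ is central, it commutes with every $b_i$, so it remains to show that $F := \langle b_0, \dots, b_{n-1}\rangle$ is free of rank $n$ and meets $\langle z\rangle$ trivially; the infiniteness of $\langle z\rangle$ itself will follow from the retraction $BS(n,n)\to\Z$ sending $a\mapsto 1$, $b\mapsto 0$, under which $z\mapsto n$.

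The main step is proving freeness of $F$, for which I would pass to the quotient $\overline{G} := BS(n,n)/\langle z\rangle$, which has presentation $\langle a, b \mid a^n\rangle \cong \Z/n\Z * \langle b\rangle$. A reduced word $w = b_{i_1}^{\epsilon_1}\cdots b_{i_k}^{\epsilon_k}$ in the $b_i$'s expands in $\overline{G}$ to $\overline{a}^{i_1}\overline{b}^{\epsilon_1}\overline{a}^{i_2-i_1}\overline{b}^{\epsilon_2}\cdots \overline{b}^{\epsilon_k}\overline{a}^{-i_k}$. Any interior $a$-exponent $i_j - i_{j-1}$ vanishes modulo $n$ only when $i_{j-1} = i_j$, and then the reducedness of $w$ forces $\epsilon_{j-1} = \epsilon_j$, so the adjacent $\overline{b}$-terms merge into $\overline{b}^{2\epsilon_j} \neq e$ rather than cancel. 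Iterating produces a free-product normal form of positive length alternating between non-trivial elements of $\Z/n\Z$ and of $\langle b\rangle$, which is therefore non-trivial. Hence the $\overline{b}_i$ freely generate, so $F \cong \mathbb{F}_n$; moreover, if some $z^k$ lay in $F$, its image in $\overline{G}$ would be trivial, forcing the representing word in the $\overline{b}_i$'s to be empty, hence $z^k = e$ and $k = 0$.

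Combining these facts, $H = \langle z\rangle \times F \cong \Z \times \mathbb{F}_n$ with $[BS(n,n) : H] = n$, as required. The hardest part is the normal-form bookkeeping in the free product $\Z/n\Z * \langle b\rangle$ establishing freeness of the $\overline{b}_i$; the Schreier computation and the assembly into a direct product are routine, though the trivial-intersection claim relies critically on that same freeness.
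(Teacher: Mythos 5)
Your proof is correct, and it reaches the same subgroup $H=\langle a^n,\,a^iba^{-i}\ (0\le i\le n-1)\rangle$ by a genuinely different route. The paper works entirely inside $BS(n,n)$: it checks normality of $H$ by conjugating generators, writes down the map $\Z\times\mathbb{F}_n\to H$ explicitly, and proves injectivity via a canonical form for elements of $H$, with uniqueness resting on the $a$-exponent-sum invariant (to force $k=0$) and on Britton's lemma for the HNN extension (to see that $a^{i_1}b^{e_1}a^{i_2-i_1}\cdots b^{e_N}a^{-i_N}$ cannot collapse, since the interior $a$-powers have absolute value less than $n$); the index is then obtained by exhibiting the coset representatives $e,a,\dots,a^{n-1}$. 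You instead realize $H$ as the kernel of the exponent-sum-mod-$n$ homomorphism, which hands you the index $n$ and (via Schreier) the generating set at once, and you certify freeness of the $b_i$ by projecting to $BS(n,n)/\langle a^n\rangle\cong \Z/n\Z * \Z$ and running the free-product normal form there; the trivial intersection $\langle a^n\rangle\cap F=\{e\}$ then falls out of the same projection. The trade-off is that your argument substitutes the normal form theorem for free products (arguably more elementary, and the bookkeeping with syllables $\overline{a}^{\,i_j-i_{j-1}}$ and merged $\overline{b}$-powers is airtight as you present it) for the paper's appeal to HNN normal forms, at the cost of introducing the quotient group; the paper's version is more self-contained within $BS(n,n)$ and yields an explicit canonical form for elements of $H$, though its justification of uniqueness is terser than yours. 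Both correctly conclude $H=\langle a^n\rangle\times F\cong\Z\times\mathbb{F}_n$ with index $n$.
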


\begin{proof}
  Let $H$ be the subgroup of $BS(n,n)$ generated by $\{a^n\} \cup \{ a^i b a^{-i} \mid i \in \{0,\dots,n-1\}\}$.
  First, $H$ is normal in $BS(n,n)$. We prove that $aHa^{-1} \subseteq H$ by verifying it on its generators: the only verification needed is $a a^{n-1} b a^{-(n-1)} a^{-1} = a^n b a^{-n} = b a^n a^{-n} = b$. Similarly, $a^{-1}Ha \subseteq H$; and finally, $bHb^{-1} \subseteq H$ (same for $b^{-1}$) since $b \in H$.
  Second, $H$ is isomorphic to $\Z \times \mathbb{F}_n$ through the following isomorphism (denoting $g_0, \dots, g_{n-1}$ the generators of $\mathbb{F}_n$ and $\epsilon$ its identity):
  \begin{alignat*}{4}
    \phi\colon & \Z \times \mathbb{F}_n & \longrightarrow & H\\
    & (1,\epsilon) & \longmapsto & a^n\\
    & (0,g_i) & \longmapsto & a^i b a^{-i}
  \end{alignat*}
  It is a morphism by construction, which is correctly defined since the only basic relation of $\Z \times \mathbb{F}_n$, that is $(1,\epsilon)\cdot(0,g_i) = (0,g_i)\cdot(1,\epsilon)$, is preserved in $H$: $a^n a^i b a^{-i} = a^i a^n b a^{-i} = a^i b a^n a^{-i} = a^i b a^{-i} a^n$.
  Said morphism is surjective, because $H$ is generated by $a^n$ and $\{ a^i b a^{-i} \mid i \in \{0,\dots,n-1\}\}$.
  Finally, it is also injective: let $g = (k, w) \in \Z \times \mathbb{F}_n$, with $w = (g_{i_1})^{e_1} \dots (g_{i_N})^{e_N}$ where the $e_l$ are in $\{-1,+1\}$.
  \[
  \phi(g) = e \Leftrightarrow a^{nk} a^{i_1} b^{e_1} a^{-i_1} a^{i_2} \dots a^{-i_{N-1}} a^{i_N} b^{e_N} a^{-i_N} = e
  \]
  This form is a canonical form in $H$: any word in $H$ can be uniquely written as such. Indeed, any word in $H$ is a succession of generators of it, $a^{i_k}b^{e_k}a^{-i_k}$ and $a^n$. But $a^n$ commutes with all the other generators due to the relation of $BS(n,n)$, so such a form is always attainable. To prove it is unique, it is enough to prove it for $e$: suppose we have some $a^{nk} a^{i_1} b^{e_1} a^{-i_1} a^{i_2} \dots a^{-i_{N-1}} a^{i_N} b^{e_N} a^{-i_N} = e$. First, realize that no relation in $BS(n,n)$ allows to reduce the total power of $a$ in a word, causing $k=0$ necessarily. Then, consider the resulting word $a^{i_1} b^{e_1} a^{i_2-i_1} \dots a^{i_N-i_{N-1}} b^{e_N} a^{-i_N}$ in $BS(n,n)$: it cannot be reduced in $BS(n,n)$ since all powers of $a$ between two $b$'s are of absolute value smaller than $n$.
  
  As a consequence, the previous equality is true only when $k=0$ and $w = \epsilon$. Hence the injectivity of the map.
  
  Moreover, any element of $BS(n,n)$ can be written in a form that much resembles the one mentioned above:
  \[
  a^p a^{nk} a^{i_1} b^{e_1} a^{-i_1} \dots a^{i_N} b^{e_N} a^{-i_N}
  \]
  with $p \in \{0,\dots,n-1\}$. To do so, first move all $a^n$'s in the rightmost power of $a$ in the word, to the leftmost part of the word. Ensure that $-i_N$, the remaining power, is in $\{-n+1, \dots, -1, 0\}$. Then force $a^{i_N}$ to appear on the left of the $b$ itself to the left of $a^{-i_N}$, and call $-i_{N-1}$ the remaining power of $a$ (it is in $\{-(n-1), \dots, -1, 0\}$ up to moving another $a^n$ to the leftmost part of the word) before another $b$ to the left. Repeat this operation until there is no $b$ to the left of the power of $a$ you consider, and split this final $a^K$ into $a^p a^{nk} a^{i_1}$.
  
  As a consequence, $BS(n,n) / H = \{\overline{e}, \overline{a}, \dots, \overline{a^{n-1}}\} \cong \Z / n\Z$.
  Hence $H$ is of finite index in $BS(n,n)$.
\end{proof}

\begin{theorem}
  \label{esnay}
  For every $n \geq 2$, $BS(n,n)$ admits a strongly aperiodic SFT.
\end{theorem}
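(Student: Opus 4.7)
The plan is to assemble the three tools just stated: \cref{PA}, \cref{finiteindexap}, and \cref{prop:BSnn}. Concretely, the goal is to exhibit a strongly aperiodic SFT on a finite-index subgroup of $BS(n,n)$, and then transfer it back.

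First, by \cref{prop:BSnn}, the group $BS(n,n)$ contains $\Z \times \mathbb{F}_n$ as a finite-index subgroup. By \cref{finiteindexap}, it therefore suffices to produce a strongly aperiodic SFT on $\Z \times \mathbb{F}_n$. To do this through \cref{PA}, I need to check that $\mathbb{F}_n$ is infinite, finitely generated, and $PA'$-recognizable. Infinite and finitely generated are clear for $n \geq 2$ (and even for $n=1$, where $\mathbb{F}_1 = \Z$). The $PA'$-recognizability of free groups is the key input; this is a standard fact in the setting of \cite{computable}, where $\mathbb{F}_n$ is realized as a group of piecewise affine rational homeomorphisms via a Ping-Pong-style action on a union of rational intervals: one picks $n$ affine contractions $f_1,\dots,f_n$ with pairwise disjoint images inside a common rational polytope $D$, so that the group they generate is free of rank $n$, and condition (B) is checked by observing that the orbits of any interior point under $\mathcal{F}$ uniquely determine the group element acting.

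With $\mathbb{F}_n$ shown to be $PA'$-recognizable, \cref{PA} yields a strongly aperiodic SFT on $\Z \times \mathbb{F}_n$. Combining with \cref{finiteindexap} applied to the inclusion $\Z \times \mathbb{F}_n \hookrightarrow BS(n,n)$ given by \cref{prop:BSnn}, we obtain a strongly aperiodic SFT on $BS(n,n)$, which is the desired conclusion.

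The main (and essentially only) delicate step is the $PA'$-recognizability of $\mathbb{F}_n$, which amounts to exhibiting an explicit Ping-Pong realization of $\mathbb{F}_n$ by piecewise affine rational homeomorphisms satisfying condition (B). Everything else is a direct invocation of the cited results. Since this $PA'$-recognizability of free groups is already noted in \cite{computable}, the present proof can simply cite it and chain the three propositions.
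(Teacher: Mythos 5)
Your proof follows exactly the same route as the paper's: chain \cref{prop:BSnn}, \cref{finiteindexap} and \cref{PA}, the only substantive input being the $PA'$-recognizability of $\mathbb{F}_n$. The one step where you supply your own argument, however, does not work as stated. A ping-pong family of affine contractions $f_1,\dots,f_n$ with \emph{pairwise disjoint images} is incompatible with the $PA'$ definition used here: the common domain is $D = \bigcap_i P_i \cap \bigcap_i P_i'$, so disjoint images $P_i'$ force $D=\emptyset$, which makes $G_{\mathcal{F}}$ (a group of restrictions to $D$) trivial, so condition (A) fails and condition (B) is vacuous. The definition requires the free group to act by homeomorphisms of one common domain, which is precisely what ping-pong contractions do not provide; note also that realizing $\mathbb{F}_2$ by piecewise affine homeomorphisms of an interval is genuinely delicate (by Brin--Squier, $PL^+([0,1])$ has no free subgroup of rank $2$), so this is not a detail one can wave away. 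The paper sidesteps the issue entirely: it quotes \cite{computable}, Proposition 5.12 (finitely generated subgroups of compact groups of integer matrices are $PA'$-recognizable) together with the embeddings $\mathbb{F}_n \hookrightarrow \mathbb{F}_2 \hookrightarrow SL_2(\Z)$ from \cite{cellulaut}. Since you yourself observe that the $PA'$-recognizability of free groups can simply be cited from \cite{computable}, your proof is salvageable by dropping the ping-pong sketch and relying on that citation, at which point it coincides with the paper's argument; but the sketch itself, taken as a proof of the key step, has a genuine gap.
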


\begin{proof}
  First, finitely generated subgroups of compact groups of matrices on integers are $PA^{\prime}$-recognizable (see \cite{computable}, Proposition 5.12). $\mathbb{F}_2$, the free group of order $2$, is isomorphic to a subgroup of $SL_2(\Z)$ (see \cite[Lemma 2.3.2]{cellulaut}), hence it is $PA^{\prime}$-recognizable.
  It is also known (see \cite[Corollary D.5.3]{cellulaut}) that $\mathbb{F}_n$ is a subgroup of $\mathbb{F}_2$; so it is isomorphic to a subgroup of $SL_2(\Z)$ and $PA^\prime$-recognizable too.
  Therefore by \cref{PA} $\Z \times \mathbb{F}_n$ admits a strongly aperiodic SFT.
  Using \cref{finiteindexap} and \cref{prop:BSnn}, we obtain that $BS(n,n)$ admits a strongly aperiodic SFT.
\end{proof}

\section*{Conclusion}

Baumlag-Solitar groups $BS(m,n)$ are residually finite if and only if $|m| = 1$, $|m| = 1$ or $|m| = |n|$ (\cref{prop:residually}). Gathering results from \cref{th:strong}, \cref{th:main} and \cref{esnay}, and considering that $BS(-m,-n) \cong BS(-m,n) \cong BS(m,-n) \cong BS(m,n)$, we obtain the following:
\begin{theorem}
    Residually finite Baumslag-Solitar groups $BS(m,n)$ with $|m| \geq 2$ or $|n| \geq 2$ admit both stronly and weakly-not-strongly aperiodic SFTs.
\end{theorem}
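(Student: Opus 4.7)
The plan is a straightforward case analysis that just reaps the earlier sections. By \cref{prop:residually}, a residually finite $BS(m,n)$ satisfies $|m|=1$, $|n|=1$, or $|m|=|n|$; combined with the hypothesis $|m|\geq 2$ or $|n|\geq 2$ and the listed isomorphisms $BS(-m,-n) \cong BS(-m,n) \cong BS(m,-n) \cong BS(m,n)$, this leaves the three families $BS(1,n)$ with $n \geq 2$, $BS(m,1)$ with $m \geq 2$, and $BS(n,n)$ with $n \geq 2$. Before splitting, I would record the extra symmetry $BS(m,n) \cong BS(n,m)$: the automorphism of the free group on $\{a,b\}$ sending $a \mapsto a$, $b \mapsto b^{-1}$ carries the defining relation $b a^m b^{-1} = a^n$ to $b^{-1} a^m b = a^n$, i.e.\ $b a^n b^{-1} = a^m$. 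Hence the family $BS(m,1)$ collapses onto $BS(1,m)$, and only two cases remain to handle.

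For $BS(1,n)$ with $n \geq 2$, both conclusions are already in the paper: \cref{th:strong} gives a strongly aperiodic SFT via Aubrun–Kari's construction $Y_{\mS_0}$, and \cref{th:main} gives a weakly-but-not-strongly aperiodic SFT via the substitution-based tileset $\tau_\sigma$ of \cref{sec:weak1n}.

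For $BS(n,n)$ with $n \geq 2$, a strongly aperiodic SFT is furnished by \cref{esnay}, which combines \cref{PA} with the finite-index subgroup $\Z \times \mathbb{F}_n$ exhibited in \cref{prop:BSnn}. For a weakly-but-not-strongly aperiodic SFT I would invoke Aubrun–Kari's $Y_{\mS_0}$ once more: the corollary following its construction in \cref{subsec:weakmn} shows it is nonempty and weakly aperiodic for every $BS(m,n)$, while \cref{prop:AKperiod} exhibits the explicit nontrivial period $p = bab^{-1}aba^{-1}b^{-1}a^{-1}$ as soon as $m,n > 1$, which covers the diagonal case $BS(n,n)$ with $n \geq 2$.

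The proof is therefore an assembly of \cref{th:strong}, \cref{th:main}, \cref{prop:AKperiod} and \cref{esnay}, with no new construction needed; there is no real obstacle. The only step that goes beyond citation is the reduction $BS(m,1) \cong BS(1,m)$, and it is immediate from the presentations. Once that is noted, the theorem follows by running through the three (then two) families.
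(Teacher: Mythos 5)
Your proposal is correct and matches the paper's own proof, which likewise just assembles \cref{th:strong}, \cref{th:main}, \cref{esnay} and the weak-not-strong aperiodicity of $Y_{\mS_0}$ from \cref{subsec:weakmn} via the stated isomorphisms. Your explicit observation that $BS(m,n)\cong BS(n,m)$ (via $a\mapsto a$, $b\mapsto b^{-1}$) to dispose of the $BS(m,1)$ family is a small but genuine improvement, since the paper's listed isomorphisms $BS(\pm m,\pm n)\cong BS(m,n)$ do not by themselves cover that case.
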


For non-residually finite Baumslag-Solitar groups, the existence of strongly aperiodic STF is still an open question.

In \cref{sec:weak1n}, we showed how to encode a particular set of substitutions into a tiling of $BS(1,n)$. 
An interesting question related to combinatorics on words would be to characterize the sets of substitutions that can be encoded using our technique.
It is clear that the nature of $BS(1,n)$ impose some conditions on these substitutions, and it would be of independent interest to obtain a self-contained condition on the substitutions and study their properties.

\section*{Acknowledgments}

The authors would like to thank Nathalie Aubrun for the interest she sparked about tilesets on the Baumslag-Solitar groups and the help she provided to understand her joint work with Jarkko Kari.
They also thank Silv\`ere Gangloff for pointing them to \cite{DGG2014} that provided the missing piece to prove \cref{lemmarepresent}; Jarkko Kari for his questions, that led to the writing of \cref{sec:weak1n}; and Pierre Guillon for his many remarks and useful suggestions that made the paper much more readable, even if it delayed a bit its publication.

The first author would like to thank Michael Schraudner and his PhD students \'Alvaro Bustos and Hugo Maturana, under whose supervision \cref{sec:strongnn} was conceived as part of an internship, for their numerous insights in the proof of said theorem.

\smallskip

This publication was made possible through the support of the ID\#61466 grant from the John Templeton Foundation, as part of the ``The Quantum Information Structure of Spacetime (QISS)'' Project (\url{qiss.fr}). The opinions expressed in this publication are those of the author(s) and do not necessarily reflect the views of the John Templeton Foundation

\bibliographystyle{abbrv}
\bibliography{biblio}

\end{document}